\newtheorem{defi}{Definition}[section]
\newtheorem{teo}[defi]{Theorem}
\newtheorem{prop}[defi]{Proposition}
\newtheorem{cor}[defi]{Corollary}
\newtheorem{os}[defi]{Remark}
\def\subjclass#1{{\renewcommand{\thefootnote}{}%
\footnote{\emph{Mathematics Subject Classification (2010):} #1}}}
\begin{document}
\title{Weighted spanning trees on some self-similar graphs}

\author[D. D'Angeli]{Daniele D'Angeli}
\address{%
Daniele D'Angeli\\
Department of Mathematics\\
Technion--Israel Institute of Technology\\
Technion City, Haifa 32 000\\
Israel}\email{dangeli@tx.technion.ac.il}

\author[A. Donno]{Alfredo Donno}
\address{%
Alfredo Donno\\
Dipartimento di Matematica\\
Sapienza Università di Roma\\
Piazzale A. Moro, 2\\
00185 Roma, Italia}\email{donno@mat.uniroma1.it}

\begin{abstract}
We compute the complexity of two infinite families of finite
graphs: the Sierpi\'{n}ski graphs, which are finite approximations
of the well-known Sierpi\'nsky gasket, and the Schreier graphs of
the Hanoi Towers group $H^{(3)}$ acting on the rooted ternary
tree. For both of them, we study the weighted generating functions
of the spanning trees, associated with several natural labellings
of the edge sets.
\end{abstract}

\subjclass{05A15, 05C22, 20E08, 05C25}

\keywords{Spanning tree, weight function, generating function,
self-similar graph, self-similar group, Schreier graph}

\maketitle

\section{Introduction}

The enumeration of spanning trees in a finite graph is largely
studied in the literature, and it has many applications in several
areas of Mathematics as Algebra,
Combinatorics, Probability and of Theoretical Computer Science.\\
\indent Given a connected finite graph $Y=(V(Y),E(Y))$, where
$V(Y)$ and $E(Y)$ denote the vertex set and the edge set of $Y$,
respectively, a \textit{spanning tree} of $Y$ is a subgraph of $Y$
which is a tree and whose vertex set coincides with $V(Y)$.\\
\indent The number of spanning trees of a graph $Y$ is called the
\textit{complexity} of $Y$ and is denoted by $\tau(Y)$. The famous
Kirchhoff's Matrix-Tree Theorem (1847) states that $\tau(Y)$ is
equal to (the constant value of) any cofactor of the Laplace
matrix of $Y$, which is obtained as the difference between the
degree matrix of $Y$ and its adjacency matrix. Equivalently,
$\tau(Y)\cdot|V(Y)|$ is given by the product of all nonzero
eigenvalues of the Laplace matrix of $Y$.\\ \indent It is
interesting to study complexity when the system grows. More
precisely, given a sequence $\{Y_n\}_{n\geq 1}$ of finite graphs
with complexity $\tau(Y_n)$, such that $|V(Y_n)|\to \infty$, the
limit
$$
\lim_{|V(Y_n)| \to \infty}\frac{\log \tau(Y_n)}{|V(Y_n)|},
$$
when it exists, is called the \emph{asymptotic growth
constant} of the spanning trees of $\{Y_n\}_{n\geq 1}$ (see \cite{lyons}).\\
\indent A \textit{spanning $k$-forest} of $Y$ is a subgraph of $Y$
which is a $k$-forest, i.e., it is a forest with $k$ connected
components, and its vertex set coincides with $V(Y)$.\\ \indent
The enumeration of spanning subgraphs, in general, for a graph
$Y$, is also strictly related to the Tutte polynomial $T_Y(x,y)$
of the graph: more precisely, it is known that $T_Y(1,1)$ equals
the complexity of $Y$, $T_Y(2,1)$ equals the number of spanning
forests of $Y$, and $T_Y(1,2)$ is the number of its connected
spanning subgraphs (see \cite{csdi, tutte}, where this analysis is
developed for the finite Sierpi\'{n}ski graphs and for other
examples of finite graphs associated with the action of automorphisms groups of rooted regular trees).\\
\indent A finer invariant of the graph $Y$ is a finite abelian
group $\Phi(Y)$, whose order is exactly the complexity of $Y$.
This group occurs in the literature under different names,
depending on the context. It was introduced in \cite{bacher} as
the Picard group of $Y$ (or the Jacobian of $Y$), whereas it is
shown in \cite{biggs} that the Picard group is isomorphic to the
group of critical configurations of the chip-firing game on $Y$.
As any finite abelian group, $\Phi(Y)$ can be decomposed into
direct sum of invariant factors. The dependence of this
decomposition on the properties of $Y$ has been studied by several
authors, (see, e.g., \cite{lorenzini}), but not much is known so
far. Explicit
computations have been performed for certain families of graphs.\\
\indent In many optimization problems it is often useful to find a
minimal spanning tree of a weighted graph. Hence, it is
interesting to study spanning trees when a weight function on
$E(Y)$ is introduced. So let $w:E(Y)\longrightarrow
\mathbb{R}_{+}$ be a weight function defined on the edges of $Y$.
Let $\mathcal{T}$ be the set of all spanning trees of $Y$. With
each spanning tree $t\in \mathcal{T}$, we can associate the weight
$$
W(t):=\prod_{e\in E(t)} w(e),
$$
i.e., the product of the weights of the edges of $Y$ belonging to
$E(t)$. Then, the \textit{weighted generating function} of the
spanning trees of $Y$ is the polynomial on the variables $w(e), e
\in E(Y)$, given by
$$
T(w):=\sum_{t\in \mathcal{T}} W(t).
$$
It follows from the definition that, when evaluated on the
constant weight $w\equiv 1$, the generating function yields the
complexity of the graph, since in this case one has $W(t)=1$, for
each $t\in \mathcal{T}$.

In this paper, we will study weighted spanning trees on two
infinite families of finite graphs very close to each other: the
Sierpi\'{n}ski graphs, which are finite approximations of the
famous Sierpi\'{n}ski gasket, and the Schreier graphs of the Hanoi
Towers group $H^{(3)}$, which is an example of a {\it self-similar
group} (see Definition \ref{defiselfsimilar} below).

We recall some basic facts about self-similar groups. Let $T_q$ be
the infinite regular rooted tree of degree $q$, i.e., the rooted
tree in which each vertex has $q$ children. Each vertex of the
$n$-th level of the tree can be regarded as a word of length $n$
in the alphabet $X=\{0,1,\ldots, q-1\}$. Now let $G<Aut(T_q)$ be a
group acting on $T_q$ by automorphisms generated by a finite
symmetric set of generators $S$. Suppose, moreover, that the
action is transitive on each level of the tree.

\begin{defi}\label{defischreiernovembre}
The $n$-th {\it Schreier graph} $\Sigma_n$ of the action of $G$ on
$T_q$, with respect to the generating set $S$, is a graph whose
vertex set coincides with the set of vertices of the $n$-th level
of the tree, and two vertices $u,v$ are adjacent if and only if
there exists $s\in S$ such that $s(u)=v$. If this is the case, the
edge joining $u$ and $v$ is labelled by $s$.
\end{defi}
The vertices of $\Sigma_n$ are labelled by words of length $n$ in
$X$ and the edges are labelled by elements of $S$. The Schreier
graph is thus a regular graph of degree $|S|$ with $q^n$ vertices,
and it is connected since the action of $G$ is level-transitive.

\begin{defi}[\cite{volo}]\label{defiselfsimilar}
A finitely generated group $G<Aut(T_q)$ is {\it self-similar} if,
for all $g\in G, x\in X$, there exist $h\in G, y\in X$ such that
$$
g(xw)=yh(w),
$$
for all finite words $w$ in the alphabet $X$.
\end{defi}

Self-similarity implies that $G$ can be embedded into the wreath
product $Sym(q)\wr G = Sym(q)\ltimes G^q$, where $Sym(q)$ denotes
the symmetric group on $q$ elements, so that any automorphism
$g\in G$ can be represented as
$$
g=\alpha(g_0,\ldots,g_{q-1}),
$$
where $\alpha\in Sym(q)$ describes the action of $g$ on the first
level of $T_q$ and $g_i\in G, i=0,...,q-1$, is the restriction of
$g$ on the full subtree of $T_q$ rooted at the vertex $i$ of the
first level of $T_q$ (observe that any such subtree is isomorphic
to $T_q$). Hence, if $x\in X$ and $w$ is a finite word in $X$, we
have
$$
g(xw)=\alpha(x)g_x(w).
$$
\indent The class of self-similar groups contains many interesting
examples of groups which have exotic properties: among them, we
mention the first Grigorchuk group, which yields the simplest
solution of the Burnside problem (an infinite, finitely generated
torsion group) and the first example of a group of intermediate
growth (see \cite{grigorchuk} for a detailed account and further
references). In the last decades, the study of automorphisms
groups of rooted trees has been largely investigated: R.
Grigorchuk and a number of coauthors have developed a new exciting
direction of research focusing on finitely generated groups acting
by automorphisms on rooted trees \cite{fractal}. They proved that
these groups have deep connections with the theory of profinite
groups and with complex dynamics. In particular, for many examples
of groups belonging to this class, the property of self-similarity
is reflected on fractalness of some limit objects associated with
them \cite{volo}.

Since the Schreier graphs are determined by group actions, their
edges are naturally labelled by the generators of the acting group
and it takes sense to study weighted spanning trees on them, with
respect to this labelling.

The paper is structured as follows. In Section
\ref{sectionsierpinski}, we study weighted spanning trees on
finite approximations of the well-known Sierpi\'{n}ski gasket,
endowed with three different edge labellings:
\begin{itemize}
\item the \lq\lq rotational-invariant\rq\rq labelling, whose special
symmetry allows to explicitly compute the generating function of
the spanning trees (Theorem \ref{modellofacileteo}) and to perform
a statistical analysis about the number of edges, with a fixed
label, occurring in a random spanning tree of the graph
(Proposition \ref{propstat});
\item the \lq\lq directional\rq\rq labelling, where the weights depend
on the direction of the edges; for this model, the weighted
generating function of the spanning trees is described via the
iteration of a polynomial map (Theorem \ref{noname});
\item the \lq\lq Schreier\rq\rq labelling, strictly related to the
labelling of the Schreier graphs of the Hanoi Towers group
$H^{(3)}$; also in this case, the weighted generating function of
the spanning trees is described via the iteration of a polynomial
map (Theorem \ref{noname2}).
\end{itemize}

In all these models we follow a combinatorial approach. The
self-similar structure of the graph (in the sense of
\cite{wagnerself}) allows to study both unweighted and weighted
subgraphs recursively. More precisely, we introduce three
different generating functions associated with spanning trees,
$2$-spanning forests, $3$-spanning forests and, using
self-similarity, we are able to establish recursive relations
(Theorems \ref{equazionimodellofacile},
\ref{geometrichedirezionale} and \ref{equazionisoddisfatte}) and
to give an explicit description of them (Theorems
\ref{modellofacileteo}, \ref{noname} and \ref{noname2}). More
generally, the self-similar structure of a graph turns out to be a
powerful tool for investigating many combinatorial and statistical
models on it: see, for instance, \cite{noiising, noidimeri,
wagner1, wagnerself, wagneraltro}.

In Section \ref{sectionhanoi}, we consider the Schreier graphs of
the Hanoi Towers group $H^{(3)}$, whose action on the ternary tree
models the famous Hanoi Towers game on three pegs (see
\cite{hanoi}), endowed with the natural edge labelling coming from
the action of its generators. Even if these graphs also have a
self-similar structure, the combinatorial approach used in the
case of the Sierpi\'{n}ski graphs seems to be much harder here.
Therefore, our technique consists in using a weighted version of
the Kirchhoff's Theorem: we construct the Laplace matrix by using
the self-similar presentation of the generators of the group,
which is impossible in the case of Sierpi\'{n}ski graphs, where
there is no group structure. In this case, the generating function
is described in terms of iterations of a rational map (Theorem
\ref{PROPOSITIONPARTITION}): this kind of approach already appears
in \cite{hecketype, hanoi} (see also \cite{noidimeri}, where we
use the same strategy to compute the partition function of the
dimer model on the Schreier graphs of the Hanoi Towers group).


\section{Spanning trees on the Sierpi\'{n}ski graphs}\label{sectionsierpinski}

The problem of enumeration of spanning trees in Sierpi\'{n}ski
graphs was largely treated in literature (see, for instance,
\cite{taiwan, wagner1}). We consider here three different
labellings of the edges of these graphs and write down the
associated generating function of the spanning trees. In all the
models, the self-similarity of the graphs plays a crucial role to
study the problem recursively. The description of the generating
function strongly depends on the symmetry of the labelling of the
graph: as we will see, in the first model that we consider, which
is invariant under rotation, we are able to give an explicit
formula for it; in the two remaining models, where we do not have
invariance under the action of any symmetry group, the generating
function is described via the iteration of two polynomial maps.

\newpage
\subsection{First model: \lq\lq Rotational-invariant\rq\rq labelling}\label{rotational}

Let $\Gamma_1$ be the graph in the following
picture.\unitlength=0,2mm

\begin{center}
\begin{picture}(400,120)
\put(110,60){$\Gamma_1$}

\letvertex D=(200,110)\letvertex E=(170,60)\letvertex F=(140,10)\letvertex G=(200,10)
\letvertex H=(260,10)\letvertex I=(230,60)

\drawvertex(D){$\bullet$}
\drawvertex(E){$\bullet$}\drawvertex(F){$\bullet$}
\drawvertex(G){$\bullet$}\drawvertex(H){$\bullet$}
\drawvertex(I){$\bullet$}

\drawundirectededge(E,D){$a$} \drawundirectededge(F,E){$b$}
\drawundirectededge(G,F){$a$}

\drawundirectededge(H,G){$b$} \drawundirectededge(I,H){$a$}
\drawundirectededge(D,I){$b$} \drawundirectededge(I,E){$c$}
\drawundirectededge(E,G){$c$} \drawundirectededge(G,I){$c$}
\end{picture}
\end{center}
For each $n\geq 1$, we define, by recurrence, the graph
$\Gamma_{n+1}$ as the graph obtained by partitioning an
equilateral triangle in four smaller equilateral triangles and by
putting in each corner a copy of $\Gamma_{n}$. Observe that this
labelling of the graph is invariant with respect to the rotation
of $\frac{2\pi}{3}$. We represent in the following picture the
graph $\Gamma_2$.\unitlength=0.2mm
\begin{center}
\begin{picture}(400,220)
\put(70,110){$\Gamma_2$}

\letvertex A=(200,210)\letvertex B=(170,160)\letvertex C=(140,110)

\letvertex D=(110,60)\letvertex E=(80,10)\letvertex F=(140,10)\letvertex G=(200,10)

\letvertex H=(260,10)\letvertex I=(320,10)
\letvertex L=(290,60)\letvertex M=(260,110)\letvertex N=(230,160)

\letvertex O=(200,110)\letvertex P=(170,60)\letvertex Q=(230,60)

\drawvertex(A){$\bullet$}\drawvertex(B){$\bullet$}
\drawvertex(C){$\bullet$}\drawvertex(D){$\bullet$}
\drawvertex(E){$\bullet$}\drawvertex(F){$\bullet$}
\drawvertex(G){$\bullet$}\drawvertex(H){$\bullet$}
\drawvertex(I){$\bullet$}\drawvertex(L){$\bullet$}\drawvertex(M){$\bullet$}
\drawvertex(N){$\bullet$}\drawvertex(O){$\bullet$}
\drawvertex(P){$\bullet$}\drawvertex(Q){$\bullet$}

\drawundirectededge(E,D){$b$} \drawundirectededge(D,C){$a$}
\drawundirectededge(C,B){$b$} \drawundirectededge(B,A){$a$}
\drawundirectededge(A,N){$b$} \drawundirectededge(N,M){$a$}
\drawundirectededge(M,L){$b$} \drawundirectededge(L,I){$a$}
\drawundirectededge(I,H){$b$} \drawundirectededge(H,G){$a$}
\drawundirectededge(G,F){$b$} \drawundirectededge(F,E){$a$}
\drawundirectededge(N,B){$c$} \drawundirectededge(O,C){$a$}
\drawundirectededge(M,O){$b$} \drawundirectededge(P,D){$c$}
\drawundirectededge(L,Q){$c$} \drawundirectededge(B,O){$c$}
\drawundirectededge(O,N){$c$} \drawundirectededge(C,P){$b$}
\drawundirectededge(P,G){$a$} \drawundirectededge(D,F){$c$}
\drawundirectededge(Q,M){$a$} \drawundirectededge(G,Q){$b$}
\drawundirectededge(H,L){$c$}\drawundirectededge(F,P){$c$}
\drawundirectededge(Q,H){$c$}
\end{picture}
\end{center}

We want to study weighted spanning trees on the graphs
$\{\Gamma_n\}_{n\geq 1}$. For each $n\geq 1$, we put:
\begin{itemize}
\item $T_n(a,b,c)=$ weighted generating function of the spanning trees of
$\Gamma_n$;
\item $S_n(a,b,c)=$ weighted generating function of the spanning $2$-forests of $\Gamma_n$, where two fixed outmost vertices belong to the same connected
component and the third outmost vertex belongs to the second
connected component;
\item $Q_n(a,b,c)=$ weighted generating function of the spanning $3$-forests of $\Gamma_n$, where the three outmost vertices belong to three different
connected components.
\end{itemize}
Observe that, because of the rotational invariance of the
labelling of the graph, the function $S_n(a,b,c)$ does not depend
on the choice of the two outmost vertices. In what follows, we
will often omit the argument $(a,b,c)$ of the weighted generating
functions.

\begin{teo}\label{equazionimodellofacile}
For each $n\geq 1$, the weighted generating functions $T_n(a,b,c),
S_n(a,b,c)$ and $Q_n(a,b,c)$ satisfy the following equations:
\begin{eqnarray}\label{Tfacile}
T_{n+1}=6T_{n}^2S_{n}
\end{eqnarray}
\begin{eqnarray}\label{Sfacile}
S_{n+1}=7T_{n}S_{n}^2 + T_{n}^2Q_{n}
\end{eqnarray}
\begin{eqnarray}\label{Qfacile}
Q_{n+1}=12T_{n}S_{n}Q_{n} +14S_{n}^3,
\end{eqnarray}
with initial conditions
$$
T_1(a,b,c) = 3(a+b)(ab+ac+bc)^2
$$
$$
S_1(a,b,c) = (a+b)(a+b+3c)(ab+ac+bc) \qquad Q_1(a,b,c) =
(a+b)(a+b+3c)^2.
$$
\end{teo}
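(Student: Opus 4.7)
The plan is to exploit the self-similar construction of $\Gamma_{n+1}$: it is obtained by placing three copies of $\Gamma_n$, which I label $A,B,C$, at the corners of the large equilateral triangle and pairwise identifying outmost vertices to form the three midpoints $m_1$ (between $A,B$), $m_2$ (between $A,C$) and $m_3$ (between $B,C$); the three apices $u_A,u_B,u_C$ that remain become the outmost vertices of $\Gamma_{n+1}$. Any spanning subgraph $F$ of $\Gamma_{n+1}$ is determined uniquely by its restrictions $F_A,F_B,F_C$ to the three copies. To recover the connectivity of $F$ from these restrictions I introduce a \emph{macro graph} $M$ whose vertex set is the disjoint union of the components of $F_A,F_B,F_C$ and whose three edges correspond to the three midpoint identifications; components of $F$ correspond bijectively to components of $M$, and $F$ is acyclic if and only if $M$ is acyclic.

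Each $F_X$ falls into one of five weighted types according to the partition of the outmost vertices of $X$ it induces: the spanning-tree type (weight $T_n$), three rotational variants of the 2-forest type (each of weight $S_n$, by the rotational invariance of the labelling), or the 3-forest type (weight $Q_n$). A straightforward edge count shows that if $F$ is a spanning tree, 2-forest or 3-forest of $\Gamma_{n+1}$, the triple $(k_A,k_B,k_C)$ of component numbers of $F_A,F_B,F_C$ must satisfy $k_A+k_B+k_C=4,5,6$ respectively. For $T_{n+1}$ this forces $(k_A,k_B,k_C)$ to be a permutation of $(1,1,2)$, and acyclicity of $M$ forces the unique 2-forest copy to isolate one of its two midpoints ($2$ of the $3$ rotational types qualifying), giving at once $T_{n+1}=6T_n^2S_n$. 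For $S_{n+1}$ I fix the partition $\{u_A,u_B\}$ together, $u_C$ isolated, and split into the patterns $(1,1,3)$ and $(1,2,2)$: in $(1,1,3)$ the $Q_n$-copy must sit on $C$, yielding the term $T_n^2Q_n$, while in $(1,2,2)$ I enumerate the $3\times 3$ rotational choices of the two $S_n$-copies organized by which copy carries the tree, finding $3$ valid configurations when the tree is on $A$, $3$ when on $B$, and only $1$ when on $C$ (the remaining pairs fail either by creating a triangular cycle in $M$ or by distributing the apices incorrectly), whence $S_{n+1}=7T_nS_n^2+T_n^2Q_n$.

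For $Q_{n+1}$ the contributing patterns are $(1,2,3)$ and $(2,2,2)$. In the former, each of the $6$ orderings of $(T_n,S_n,Q_n)$ over $(A,B,C)$ admits exactly $2$ of the $3$ rotational variants of its $S_n$-copy that place the three apices in three different macro-components, producing the term $12T_nS_nQ_n$. The pattern $(2,2,2)$ is the main combinatorial obstacle: here $M$ has $6$ vertices and $3$ edges, and it contains a triangular cycle precisely when $\phi(X)=u_X$ for all three copies $X$, with $\phi(X)$ denoting the isolated outmost of $F_X$. Stratifying the remaining $26$ triples by the cardinality of $\{X:\phi(X)=u_X\}$ (with the cardinality $3$ slice being the unique forbidden one), I count $6,6,2$ valid configurations for cardinalities $2,1,0$ respectively, totalling $14$ and giving the summand $14S_n^3$. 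Combining these contributions yields $Q_{n+1}=12T_nS_nQ_n+14S_n^3$. The initial values $T_1,S_1,Q_1$ are then obtained by a direct enumeration of spanning trees, 2-forests and 3-forests of the explicit 6-vertex graph $\Gamma_1$, collected by the multi-set of labels of their edge sets; an application of the weighted Matrix-Tree theorem to $\Gamma_1$ delivers the claimed expression for $T_1$, while the stated formulas for $S_1$ and $Q_1$ follow by enumerating the spanning subgraphs of the prescribed component type.
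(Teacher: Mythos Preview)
Your argument is correct and follows the same self-similar decomposition as the paper: both proofs build $\Gamma_{n+1}$ from three copies of $\Gamma_n$, classify the restriction of a spanning forest to each copy by the induced partition of its three outmost vertices, and then enumerate which triples of types assemble into the desired object. The paper carries out this enumeration pictorially, grouping configurations by symmetry class and reading off the multiplicities $6$, $4{+}3$, $1$, $12$, $12{+}2$ directly from the diagrams; you instead introduce the macro graph $M$ on the components, use the edge-count identity $k_A+k_B+k_C\in\{4,5,6\}$ to pin down the admissible type patterns, and then stratify by the position of the tree-copy (for $S_{n+1}$) or by $|\{X:\phi(X)=u_X\}|$ (for the $(2,2,2)$ contribution to $Q_{n+1}$). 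The content is the same; your macro-graph bookkeeping makes the acyclicity and component checks more explicit and transfers more readily to the non-symmetric labellings treated later, while the paper's pictorial version is quicker to parse. One small point worth making explicit in your write-up: the restriction to the five types (i.e.\ $k_X\le 3$ with each component meeting an outmost vertex) is itself a consequence of your macro-graph setup, since any component of $F_X$ missing all three outmost vertices would be an isolated vertex of $M$ and hence force an extra component of $F$.
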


\begin{proof}
The graph $\Gamma_{n+1}$ can be represented as a triangle
containing three copies of $\Gamma_{n}$.
\begin{center}\unitlength=0.3mm
\begin{picture}(400,85)
\letvertex A=(200,78)\letvertex B=(180,44)
\letvertex C=(160,10)\letvertex D=(200,10)
\letvertex E=(240,10)\letvertex F=(220,44)

\put(150,55){$\Gamma_{n+1}$} \put(195,51){$\Gamma_{n}$}
\put(175,16){$\Gamma_{n}$} \put(215,16){$\Gamma_{n}$}

\drawvertex(A){$\bullet$}\drawvertex(B){$\bullet$}
\drawvertex(C){$\bullet$}\drawvertex(D){$\bullet$}
\drawvertex(E){$\bullet$}\drawvertex(F){$\bullet$}

\drawundirectededge(D,B){} \drawundirectededge(F,D){}
\drawundirectededge(B,F){} \drawundirectededge(D,E){}
\drawundirectededge(B,C){}\drawundirectededge(F,A){}\drawundirectededge(A,B){}
\drawundirectededge(C,D){} \drawundirectededge(E,F){}
\end{picture}
\end{center}

\noindent We will use the pictures
\begin{center}
\begin{picture}(500,55)
\unitlength=0.25mm
\letvertex A=(90,45)\letvertex B=(70,10)\letvertex C=(110,10)

\letvertex D=(200,45)\letvertex E=(180,10)\letvertex F=(220,10)

\letvertex G=(310,45)\letvertex H=(290,10)\letvertex I=(330,10)

\letvertex a=(182,33)\letvertex b=(218,33)

\letvertex c=(295,43)\letvertex d=(308,30)\letvertex e=(325,43)

\letvertex f=(312,30)\letvertex g=(310,22)\letvertex h=(310,1)

\drawvertex(A){$\bullet$}\drawvertex(B){$\bullet$}
\drawvertex(C){$\bullet$}\drawvertex(D){$\bullet$}
\drawvertex(E){$\bullet$}\drawvertex(F){$\bullet$}\drawvertex(G){$\bullet$}
\drawvertex(H){$\bullet$}\drawvertex(I){$\bullet$}

\drawundirectededge(A,B){} \drawundirectededge(B,C){}
\drawundirectededge(C,A){} \drawundirectededge(D,E){}
\drawundirectededge(E,F){}\drawundirectededge(F,D){}\drawundirectededge(G,H){}
\drawundirectededge(H,I){} \drawundirectededge(I,G){}

\drawundirectededge(a,b){} \drawundirectededge(c,d){}
\drawundirectededge(e,f){} \drawundirectededge(h,g){}
\end{picture}
\end{center}
to denote, respectively, the case where in a copy of $\Gamma_{n}$:
\begin{itemize}
\item the three outmost vertices are in the same connected
component;
\item two outmost vertices are in the same connected component and
the third one is in a different connected component;
\item the outmost vertices are in three different connected
components.
\end{itemize}

The only way to construct a spanning tree of $\Gamma_{n+1}$ is to
choose a spanning tree in two copies of $\Gamma_{n}$ and a
spanning $2$-forest in the third one, as in the following picture.
\begin{center}
\begin{picture}(400,80)
\letvertex A=(200,78)\letvertex B=(180,44)
\letvertex C=(160,10)\letvertex D=(200,10)
\letvertex E=(240,10)\letvertex F=(220,44)

\letvertex x=(180,59)\letvertex X=(193,34)

\drawundirectededge(x,X){}

\drawvertex(A){$\bullet$}\drawvertex(B){$\bullet$}
\drawvertex(C){$\bullet$}\drawvertex(D){$\bullet$}
\drawvertex(E){$\bullet$}\drawvertex(F){$\bullet$}

\drawundirectededge(D,B){} \drawundirectededge(F,D){}
\drawundirectededge(B,F){} \drawundirectededge(D,E){}
\drawundirectededge(B,C){}\drawundirectededge(F,A){}\drawundirectededge(A,B){}
\drawundirectededge(C,D){} \drawundirectededge(E,F){}
\end{picture}
\end{center}

This argument proves Equation \eqref{Tfacile}, where the factor
$6$ is given by symmetry (we have to take into account both
reflections and rotations).

Next, we are going to prove Equation \eqref{Sfacile} (we analyze,
for instance, the case where the leftmost and the rightmost
vertices are in the same connected component). Consider the two
following pictures.

\begin{center}
\begin{picture}(400,80)
\letvertex A=(100,78)\letvertex B=(80,44)
\letvertex C=(60,10)\letvertex D=(100,10)
\letvertex E=(140,10)\letvertex F=(120,44)

\letvertex x=(80,59)\letvertex X=(93,34)
\letvertex y=(105,35)\letvertex Y=(135,35)
\drawundirectededge(x,X){}\drawundirectededge(y,Y){}

\letvertex u=(285,69)\letvertex U=(315,69)
\letvertex v=(265,35)\letvertex V=(295,35)

\drawundirectededge(u,U){}\drawundirectededge(v,V){}

\drawvertex(A){$\bullet$}\drawvertex(B){$\bullet$}
\drawvertex(C){$\bullet$}\drawvertex(D){$\bullet$}
\drawvertex(E){$\bullet$}\drawvertex(F){$\bullet$}

\drawundirectededge(D,B){} \drawundirectededge(F,D){}
\drawundirectededge(B,F){} \drawundirectededge(D,E){}
\drawundirectededge(B,C){}\drawundirectededge(F,A){}\drawundirectededge(A,B){}
\drawundirectededge(C,D){} \drawundirectededge(E,F){}

\letvertex a=(300,78)\letvertex b=(280,44)
\letvertex c=(260,10)\letvertex d=(300,10)
\letvertex e=(340,10)\letvertex f=(320,44)

\drawvertex(a){$\bullet$}\drawvertex(b){$\bullet$}
\drawvertex(c){$\bullet$}\drawvertex(d){$\bullet$}
\drawvertex(e){$\bullet$}\drawvertex(f){$\bullet$}

\drawundirectededge(d,b){} \drawundirectededge(f,d){}
\drawundirectededge(b,f){} \drawundirectededge(d,e){}
\drawundirectededge(b,c){}\drawundirectededge(f,a){}\drawundirectededge(a,b){}
\drawundirectededge(c,d){} \drawundirectededge(e,f){}
\end{picture}
\end{center}
These possibilities, together with their symmetric, obtained by
reflecting with respect to the vertical axis, give a contribution
to $S_{n+1}$ equal to $4T_{n}S_{n}^2$. Consider now the two
following configurations.

\begin{center}
\begin{picture}(400,80)
\letvertex A=(100,78)\letvertex B=(80,44)
\letvertex C=(60,10)\letvertex D=(100,10)
\letvertex E=(140,10)\letvertex F=(120,44)

\letvertex x=(85,69)\letvertex X=(115,69)
\letvertex y=(100,23)\letvertex Y=(90,5)
\drawundirectededge(x,X){}\drawundirectededge(y,Y){}

\letvertex u=(265,35)\letvertex U=(295,35)
\letvertex v=(305,35)\letvertex V=(335,35)

\drawundirectededge(u,U){}\drawundirectededge(v,V){}

\drawvertex(A){$\bullet$}\drawvertex(B){$\bullet$}
\drawvertex(C){$\bullet$}\drawvertex(D){$\bullet$}
\drawvertex(E){$\bullet$}\drawvertex(F){$\bullet$}

\drawundirectededge(D,B){} \drawundirectededge(F,D){}
\drawundirectededge(B,F){} \drawundirectededge(D,E){}
\drawundirectededge(B,C){}\drawundirectededge(F,A){}\drawundirectededge(A,B){}
\drawundirectededge(C,D){} \drawundirectededge(E,F){}

\letvertex a=(300,78)\letvertex b=(280,44)
\letvertex c=(260,10)\letvertex d=(300,10)
\letvertex e=(340,10)\letvertex f=(320,44)

\drawvertex(a){$\bullet$}\drawvertex(b){$\bullet$}
\drawvertex(c){$\bullet$}\drawvertex(d){$\bullet$}
\drawvertex(e){$\bullet$}\drawvertex(f){$\bullet$}

\drawundirectededge(d,b){} \drawundirectededge(f,d){}
\drawundirectededge(b,f){} \drawundirectededge(d,e){}
\drawundirectededge(b,c){}\drawundirectededge(f,a){}\drawundirectededge(a,b){}
\drawundirectededge(c,d){} \drawundirectededge(e,f){}
\end{picture}
\end{center}
Since the picture on the left has to be considered together with
its symmetric, we get a contribution to $S_{n+1}$ equal to
$3T_{n}S_{n}^2$. Finally, the contribution $T_{n}^2Q_{n}$ is
described by the following picture.
\begin{center}
\begin{picture}(400,85)
\letvertex A=(200,78)\letvertex B=(180,44)
\letvertex C=(160,10)\letvertex D=(200,10)
\letvertex E=(240,10)\letvertex F=(220,44)

\letvertex a=(182,70)\letvertex b=(198,50)
\letvertex c=(218,70)\letvertex d=(202,50)
\letvertex e=(200,54)\letvertex f=(200,34)

\drawundirectededge(a,b){} \drawundirectededge(c,d){}
\drawundirectededge(e,f){}

\drawvertex(A){$\bullet$}\drawvertex(B){$\bullet$}
\drawvertex(C){$\bullet$}\drawvertex(D){$\bullet$}
\drawvertex(E){$\bullet$}\drawvertex(F){$\bullet$}

\drawundirectededge(D,B){} \drawundirectededge(F,D){}
\drawundirectededge(B,F){} \drawundirectededge(D,E){}
\drawundirectededge(B,C){}\drawundirectededge(F,A){}\drawundirectededge(A,B){}
\drawundirectededge(C,D){} \drawundirectededge(E,F){}
\end{picture}
\end{center}
This proves Equation \eqref{Sfacile}.

We have now to prove Equation \eqref{Qfacile} about $Q_{n+1}$.
Consider the following situations.
\begin{center}
\begin{picture}(400,80)
\letvertex A=(100,78)\letvertex B=(80,44)
\letvertex C=(60,10)\letvertex D=(100,10)
\letvertex E=(140,10)\letvertex F=(120,44)

\letvertex aa=(82,70)\letvertex bb=(98,50)
\letvertex cc=(118,70)\letvertex dd=(102,50)
\letvertex ee=(100,54)\letvertex ff=(100,34)
\drawundirectededge(aa,bb){} \drawundirectededge(cc,dd){}
\drawundirectededge(ee,ff){}

\letvertex aaa=(282,70)\letvertex bbb=(298,50)
\letvertex ccc=(318,70)\letvertex ddd=(302,50)
\letvertex eee=(300,54)\letvertex fff=(300,34)
\drawundirectededge(aaa,bbb){} \drawundirectededge(ccc,ddd){}
\drawundirectededge(eee,fff){}

\letvertex AA=(60,25)\letvertex BB=(70,5)
\letvertex CC=(300,25)\letvertex DD=(290,5)
\drawundirectededge(AA,BB){} \drawundirectededge(CC,DD){}

\drawvertex(A){$\bullet$}\drawvertex(B){$\bullet$}
\drawvertex(C){$\bullet$}\drawvertex(D){$\bullet$}
\drawvertex(E){$\bullet$}\drawvertex(F){$\bullet$}

\drawundirectededge(D,B){} \drawundirectededge(F,D){}
\drawundirectededge(B,F){} \drawundirectededge(D,E){}
\drawundirectededge(B,C){}\drawundirectededge(F,A){}\drawundirectededge(A,B){}
\drawundirectededge(C,D){} \drawundirectededge(E,F){}

\letvertex a=(300,78)\letvertex b=(280,44)
\letvertex c=(260,10)\letvertex d=(300,10)
\letvertex e=(340,10)\letvertex f=(320,44)

\drawvertex(a){$\bullet$}\drawvertex(b){$\bullet$}
\drawvertex(c){$\bullet$}\drawvertex(d){$\bullet$}
\drawvertex(e){$\bullet$}\drawvertex(f){$\bullet$}

\drawundirectededge(d,b){} \drawundirectededge(f,d){}
\drawundirectededge(b,f){} \drawundirectededge(d,e){}
\drawundirectededge(b,c){}\drawundirectededge(f,a){}\drawundirectededge(a,b){}
\drawundirectededge(c,d){} \drawundirectededge(e,f){}
\end{picture}
\end{center}
They provide, by symmetry, a contribution equal to
$12T_{n}S_{n}Q_{n}$. The following pictures give, by symmetry, a
contribution of $12S_{n}^3$ to $Q_{n+1}$.

\begin{center}
\begin{picture}(400,80)
\letvertex A=(100,78)\letvertex B=(80,44)
\letvertex C=(60,10)\letvertex D=(100,10)
\letvertex E=(140,10)\letvertex F=(120,44)

\letvertex AA=(85,70)\letvertex BB=(115,70)
\letvertex CC=(285,70)\letvertex DD=(315,70)
\drawundirectededge(AA,BB){} \drawundirectededge(CC,DD){}

\letvertex AAA=(65,36)\letvertex BBB=(95,36)
\letvertex CCC=(265,36)\letvertex DDD=(295,36)
\drawundirectededge(AAA,BBB){} \drawundirectededge(CCC,DDD){}

\letvertex u=(135,30)\letvertex U=(125,3)
\letvertex v=(305,30)\letvertex V=(315,3)
\drawundirectededge(u,U){} \drawundirectededge(v,V){}

\drawvertex(A){$\bullet$}\drawvertex(B){$\bullet$}
\drawvertex(C){$\bullet$}\drawvertex(D){$\bullet$}
\drawvertex(E){$\bullet$}\drawvertex(F){$\bullet$}

\drawundirectededge(D,B){} \drawundirectededge(F,D){}
\drawundirectededge(B,F){} \drawundirectededge(D,E){}
\drawundirectededge(B,C){}\drawundirectededge(F,A){}\drawundirectededge(A,B){}
\drawundirectededge(C,D){} \drawundirectededge(E,F){}

\letvertex a=(300,78)\letvertex b=(280,44)
\letvertex c=(260,10)\letvertex d=(300,10)
\letvertex e=(340,10)\letvertex f=(320,44)

\drawvertex(a){$\bullet$}\drawvertex(b){$\bullet$}
\drawvertex(c){$\bullet$}\drawvertex(d){$\bullet$}
\drawvertex(e){$\bullet$}\drawvertex(f){$\bullet$}

\drawundirectededge(d,b){} \drawundirectededge(f,d){}
\drawundirectededge(b,f){} \drawundirectededge(d,e){}
\drawundirectededge(b,c){}\drawundirectededge(f,a){}\drawundirectededge(a,b){}
\drawundirectededge(c,d){} \drawundirectededge(e,f){}
\end{picture}
\end{center}
Finally, the following two pictures give a contribution of
$2S_{n}^3$ to $Q_{n+1}$.
\begin{center}
\begin{picture}(400,80)
\letvertex A=(100,78)\letvertex B=(80,44)
\letvertex C=(60,10)\letvertex D=(100,10)
\letvertex E=(140,10)\letvertex F=(120,44)

\letvertex AA=(105,34)\letvertex BB=(135,34)
\letvertex CC=(265,34)\letvertex DD=(295,34)
\drawundirectededge(AA,BB){} \drawundirectededge(CC,DD){}

\letvertex AAA=(85,64)\letvertex BBB=(95,34)
\letvertex CCC=(315,64)\letvertex DDD=(305,34)
\drawundirectededge(AAA,BBB){} \drawundirectededge(CCC,DDD){}

\letvertex u=(95,30)\letvertex U=(85,3)
\letvertex v=(305,30)\letvertex V=(315,3)
\drawundirectededge(u,U){} \drawundirectededge(v,V){}

\drawvertex(A){$\bullet$}\drawvertex(B){$\bullet$}
\drawvertex(C){$\bullet$}\drawvertex(D){$\bullet$}
\drawvertex(E){$\bullet$}\drawvertex(F){$\bullet$}

\drawundirectededge(D,B){} \drawundirectededge(F,D){}
\drawundirectededge(B,F){} \drawundirectededge(D,E){}
\drawundirectededge(B,C){}\drawundirectededge(F,A){}\drawundirectededge(A,B){}
\drawundirectededge(C,D){} \drawundirectededge(E,F){}

\letvertex a=(300,78)\letvertex b=(280,44)
\letvertex c=(260,10)\letvertex d=(300,10)
\letvertex e=(340,10)\letvertex f=(320,44)

\drawvertex(a){$\bullet$}\drawvertex(b){$\bullet$}
\drawvertex(c){$\bullet$}\drawvertex(d){$\bullet$}
\drawvertex(e){$\bullet$}\drawvertex(f){$\bullet$}

\drawundirectededge(d,b){} \drawundirectededge(f,d){}
\drawundirectededge(b,f){} \drawundirectededge(d,e){}
\drawundirectededge(b,c){}\drawundirectededge(f,a){}\drawundirectededge(a,b){}
\drawundirectededge(c,d){} \drawundirectededge(e,f){}
\end{picture}
\end{center}
This completes the proof.
\end{proof}

\begin{teo}\label{modellofacileteo}
For each $n\geq 1$, the weighted generating functions $T_n(a,b,c),
S_n(a,b,c)$ and $Q_n(a,b,c)$ satisfying Equations \eqref{Tfacile},
\eqref{Sfacile} and \eqref{Qfacile}, with the initial conditions
given in Theorem \ref{equazionimodellofacile}, are:
$$
T_n(a,b,c)=2^{\frac{3^{n-1}-1}{2}}3^{\frac{3^n+2n-1}{4}}5^{\frac{3^{n-1}-2n+1}{4}}(a+b)^{3^{n-1}}(a+b+3c)^{\frac{3^{n-1}-1}{2}}
(ab+ac+bc)^{\frac{3^n+1}{2}};
$$
$$
S_n(a,b,c)=2^{\frac{3^{n-1}-1}{2}}3^{\frac{3^n-2n-1}{4}}5^{\frac{3^{n-1}+2n-3}{4}}(a+b)^{3^{n-1}}(a+b+3c)^{\frac{3^{n-1}+1}{2}}
(ab+ac+bc)^{\frac{3^n-1}{2}};
$$
$$
Q_n(a,b,c)=2^{\frac{3^{n-1}-1}{2}}3^{\frac{3^n-6n+3}{4}}5^{\frac{3^{n-1}+6n-7}{4}}(a+b)^{3^{n-1}}(a+b+3c)^{\frac{3^{n-1}+3}{2}}
(ab+ac+bc)^{\frac{3^n-3}{2}}.
$$
\end{teo}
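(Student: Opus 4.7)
The proof naturally proceeds by induction on $n$, after observing that all three formulas have the shape (numerical constant)$\times A^{\alpha}B^{\beta}C^{\gamma}$, where I write $A:=a+b$, $B:=a+b+3c$, $C:=ab+ac+bc$. The base case $n=1$ is a direct verification: plugging $n=1$ into the exponents of $2,3,5$ yields $3,1,1$ as the constants for $T_1,S_1,Q_1$, matching the initial data in Theorem \ref{equazionimodellofacile}.

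For the inductive step, assume the formulas hold at level $n$, and write
\[
T_n=\gamma_n A^{3^{n-1}}B^{\frac{3^{n-1}-1}{2}}C^{\frac{3^n+1}{2}},\qquad S_n=\sigma_n A^{3^{n-1}}B^{\frac{3^{n-1}+1}{2}}C^{\frac{3^n-1}{2}},\qquad Q_n=\omega_n A^{3^{n-1}}B^{\frac{3^{n-1}+3}{2}}C^{\frac{3^n-3}{2}},
\]
with $\gamma_n,\sigma_n,\omega_n$ the prescribed numerical constants. The crucial point is that each of the products $T_n^2S_n$, $T_nS_n^2$, $T_n^2Q_n$, $T_nS_nQ_n$, $S_n^3$ is automatically of the form $A^{3^n}B^{\star}C^{\star\star}$ with the \emph{same} exponents of $B$ and $C$ within each recursion. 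A short arithmetic check shows that $T_nS_n^2$ and $T_n^2Q_n$ both contribute the monomial $A^{3^n}B^{(3^n+1)/2}C^{(3^{n+1}-1)/2}$ required by $S_{n+1}$, and that $T_nS_nQ_n$ and $S_n^3$ both contribute $A^{3^n}B^{(3^n+3)/2}C^{(3^{n+1}-3)/2}$ required by $Q_{n+1}$. Hence the apparently non-monomial right-hand sides of \eqref{Sfacile} and \eqref{Qfacile} collapse into genuine monomials, and the $(A,B,C)$-parts of the formulas propagate correctly.

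What remains is purely numerical: one must verify that the constants $\gamma_n,\sigma_n,\omega_n$ satisfy the scalar recursions
\[
\gamma_{n+1}=6\gamma_n^2\sigma_n,\qquad \sigma_{n+1}=7\gamma_n\sigma_n^2+\gamma_n^2\omega_n,\qquad \omega_{n+1}=12\gamma_n\sigma_n\omega_n+14\sigma_n^3.
\]
The first and the multiplicative parts of the others reduce, after taking logarithms base $2$, $3$, $5$, to three linear identities in $n$ and $3^{n-1}$; the additive ones require also checking that $7\gamma_n\sigma_n^2+\gamma_n^2\omega_n$ equals $\sigma_{n+1}$ (equivalently, that $\gamma_n\omega_n/\sigma_n^2$ is a constant independent of $n$, so that $7\gamma_n\sigma_n^2+\gamma_n^2\omega_n=(7+\gamma_n\omega_n/\sigma_n^2)\gamma_n\sigma_n^2$ simplifies), and similarly $\gamma_n\omega_n/\sigma_n^2$ and $\sigma_n^2/(\gamma_n\omega_n)$ turn out to be constant. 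Indeed, from the explicit formulas one sees that $\gamma_n\omega_n/\sigma_n^2$ is the constant $9/5$, so that $7+9/5=44/5$ and $44/5\cdot\gamma_n\sigma_n^2$ has to equal $\sigma_{n+1}$; analogously $\sigma_n^2/(\gamma_n\omega_n)=5/9$ yields $12+14\cdot 5/9=158/9$ for $\omega_{n+1}/(\gamma_n\sigma_n\omega_n)$.

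The main obstacle, and the only nontrivial part, is precisely this last check: showing that the ratios $\gamma_n\omega_n/\sigma_n^2$ and $\sigma_n^3/(\gamma_n\sigma_n\omega_n)$ are constants equal to the right rational numbers, so that the additive right-hand sides factor as required, and then matching $44/5$, $158/9$ and $6$ with the prefactors $2,3,5$ encoded in the exponents. This is a bookkeeping exercise in the exponents of $2$, $3$, $5$ as linear functions of $n$, which one verifies directly. Once these numerical identities are in place, the induction is complete and the closed formulas follow.
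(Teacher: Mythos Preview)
Your approach is exactly that of the paper: prove the closed forms by induction on $n$, checking the base case and then verifying that the proposed formulas satisfy the recursions \eqref{Tfacile}--\eqref{Qfacile}. The paper merely says ``one can check that the functions given in the claim satisfy Equations \eqref{Tfacile}, \eqref{Sfacile} and \eqref{Qfacile}. We omit here the explicit computations''; you have usefully spelled out the mechanism, namely that all the monomials $T_n^2S_n$, $T_nS_n^2$, $T_n^2Q_n$, $T_nS_nQ_n$, $S_n^3$ have matching $A,B,C$-exponents so that the additive recursions collapse, and that the problem reduces to checking that the ratio $\gamma_n\omega_n/\sigma_n^2$ is constant.

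There is, however, an arithmetic slip in your concrete numbers. From the stated exponents one finds
\[
\frac{\gamma_n\omega_n}{\sigma_n^2}=2^{0}\,3^{1}\,5^{0}=3,\qquad\text{not }9/5,
\]
so that $7+3=10$ and $\sigma_{n+1}=10\,\gamma_n\sigma_n^2$ (which indeed matches $2^{1}3^{0}5^{1}$), and likewise $\sigma_n^2/(\gamma_n\omega_n)=1/3$, giving $12+14/3=50/3$ and $\omega_{n+1}=(50/3)\,\gamma_n\sigma_n\omega_n$ (matching $2^{1}3^{-1}5^{2}$). With these corrected constants the inductive verification goes through exactly as you describe; your values $9/5$, $44/5$, $158/9$ do not. (A quick sanity check at $n=1$: $\gamma_1=3$, $\sigma_1=\omega_1=1$, so $\gamma_1\omega_1/\sigma_1^2=3$.)
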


\begin{proof}
The proof is by induction on $n$. It is easy to verify that, for
$n=1$, one gets the initial conditions given in Theorem
\ref{equazionimodellofacile}. Then, one can check that the
functions given in the claim satisfy Equations \eqref{Tfacile},
\eqref{Sfacile} and \eqref{Qfacile}. We omit here the explicit
computations.
\end{proof}

It follows that $T_n(1,1,1)=\tau(\Gamma_n)$; similarly,
$s(\Gamma_n):= S_n(1,1,1)$ is the number of spanning $2$-forests
of $\Gamma_n$, where two fixed outmost vertices belong to the same
connected component and the third outmost vertex belongs to the
second connected component; $q(\Gamma_n):=Q_n(1,1,1)$ is the
number of spanning $3$-forests of $\Gamma_n$, where the three
outmost vertices belong to three different connected components.

\begin{cor}
For each $n\geq 1$, one has:
\begin{enumerate}
\item $\displaystyle \tau(\Gamma_n)=
2^{\frac{3^n-1}{2}}3^{\frac{3^{n+1}+2n+1}{4}}5^{\frac{3^n-2n-1}{4}}$;
\item $\displaystyle s(\Gamma_n)=
2^{\frac{3^n-1}{2}}3^{\frac{3^{n+1}-2n-3}{4}}5^{\frac{3^n+2n-1}{4}}$;
\item $\displaystyle q(\Gamma_n)= 2^{\frac{3^n-1}{2}}3^{\frac{3^{n+1}-6n-3}{4}}5^{\frac{3^n+6n-1}{4}}$.
\end{enumerate}
In particular, the asymptotic growth constant of the spanning
trees of $\Gamma_n$ is $\frac{1}{3}\log 2+\frac{1}{2}\log
3+\frac{1}{6}\log 5$.
\end{cor}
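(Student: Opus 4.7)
The proof is essentially a direct specialization of Theorem \ref{modellofacileteo} followed by a straightforward asymptotic calculation, so the plan is short.

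First I would substitute $a=b=c=1$ into the three closed formulas given in Theorem \ref{modellofacileteo}. Under this substitution the three factors reduce to numerical constants: $a+b=2$, $a+b+3c=5$, and $ab+ac+bc=3$. Thus each of $T_n(1,1,1)$, $S_n(1,1,1)$, $Q_n(1,1,1)$ becomes a monomial in $2$, $3$, $5$, and one only has to collect exponents. For instance, in $T_n$ the power of $2$ is $\tfrac{3^{n-1}-1}{2}+3^{n-1}=\tfrac{3^n-1}{2}$, the power of $3$ is $\tfrac{3^n+2n-1}{4}+\tfrac{3^n+1}{2}=\tfrac{3^{n+1}+2n+1}{4}$, and the power of $5$ is $\tfrac{3^{n-1}-2n+1}{4}+\tfrac{3^{n-1}-1}{2}=\tfrac{3^n-2n-1}{4}$. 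The formulas for $s(\Gamma_n)$ and $q(\Gamma_n)$ follow in the same manner from $S_n$ and $Q_n$, respectively.

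For the asymptotic growth constant I need $|V(\Gamma_n)|$. The recursive description of $\Gamma_{n+1}$ as three copies of $\Gamma_n$ glued pairwise at a single outmost vertex gives the recursion $|V(\Gamma_{n+1})|=3|V(\Gamma_n)|-3$ with $|V(\Gamma_1)|=6$; solving yields $|V(\Gamma_n)|=\tfrac{3(3^n+1)}{2}$.

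Finally, dividing
\[
\log\tau(\Gamma_n)=\tfrac{3^n-1}{2}\log 2+\tfrac{3^{n+1}+2n+1}{4}\log 3+\tfrac{3^n-2n-1}{4}\log 5
\]
by $|V(\Gamma_n)|=\tfrac{3(3^n+1)}{2}$ and letting $n\to\infty$, the $n$-linear terms are negligible and the dominant $3^n$-coefficients produce the limit $\tfrac{1}{3}\log 2+\tfrac{1}{2}\log 3+\tfrac{1}{6}\log 5$. There is no real obstacle here; the only point requiring mild care is the vertex count, which is easily pinned down from the self-similar gluing described before Theorem \ref{equazionimodellofacile}.
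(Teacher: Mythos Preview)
Your proposal is correct and follows exactly the same approach as the paper: evaluate the formulas of Theorem \ref{modellofacileteo} at $a=b=c=1$, use the vertex count $|V(\Gamma_n)|=\tfrac{3}{2}(3^n+1)$, and compute the limit. You simply spell out the exponent bookkeeping and the derivation of $|V(\Gamma_n)|$ that the paper leaves implicit.
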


\begin{proof}
It suffices to evaluate the weighted generating functions
described in Theorem \ref{modellofacileteo} for $a=b=c=1$. The
asymptotic growth constant is then obtained as the limit
$$
\lim_{n\to \infty}\frac{\log(\tau(\Gamma_n))}{|V(\Gamma_n)|},
$$
where $|V(\Gamma_n)| = \frac{3}{2}(3^n+1)$ is the number of
vertices of $\Gamma_n$, for each $n\geq 1$.
\end{proof}

\begin{os}\rm
The same values of the complexity and of the asymptotic growth
constant have been found in \cite{taiwan} and \cite{wagner1},
where the authors study unweighted spanning trees of $\Gamma_n$.
\end{os}


\subsection{Second model: \lq\lq directional\rq\rq labelling}\label{directional}

Consider now a new sequence of graphs $\{\Gamma_n\}_{n\geq 1}$,
which coincide, as unweighted graphs, with the graphs studied in
Section \ref{rotational}, and whose edges are endowed with a new
labelling, that we call directional labelling. It is clear that an
edge of $\Gamma_n$ can point in three different directions: up
(from left to right), down (from left to right) or horizontal.
Then, we label by $a$ each edge pointing up, by $b$ each
horizontal edge, and by $c$ each edge pointing down, where, as
usual, $a,b,c\in \mathbb{R}_+$. Here we draw the three first
examples.\unitlength=0,18mm
\begin{center}
\begin{picture}(400,100)

\put(55,30){$\Gamma_1$}\put(215,30){$\Gamma_2$}

\letvertex A=(120,60)\letvertex B=(90,10)\letvertex C=(150,10)

\letvertex D=(310,110)\letvertex E=(280,60)\letvertex F=(250,10)\letvertex G=(310,10)
\letvertex H=(370,10)\letvertex I=(340,60)

\drawvertex(A){$\bullet$}\drawvertex(B){$\bullet$}
\drawvertex(C){$\bullet$}\drawvertex(D){$\bullet$}
\drawvertex(E){$\bullet$}\drawvertex(F){$\bullet$}
\drawvertex(G){$\bullet$}\drawvertex(H){$\bullet$}
\drawvertex(I){$\bullet$}

\drawundirectededge(B,A){$a$} \drawundirectededge(C,B){$b$}
\drawundirectededge(A,C){$c$} \drawundirectededge(E,D){$a$}
\drawundirectededge(F,E){$a$} \drawundirectededge(G,F){$b$}

\drawundirectededge(H,G){$b$} \drawundirectededge(I,H){$c$}
\drawundirectededge(D,I){$c$} \drawundirectededge(I,E){$b$}
\drawundirectededge(E,G){$c$} \drawundirectededge(G,I){$a$}
\end{picture}
\end{center}
\begin{center}
\begin{picture}(400,200)

\put(95,110){$\Gamma_3$}

\letvertex A=(200,210)\letvertex B=(170,160)\letvertex C=(140,110)
\letvertex D=(110,60)\letvertex E=(80,10)\letvertex F=(140,10)\letvertex G=(200,10)
\letvertex H=(260,10)\letvertex I=(320,10)
\letvertex L=(290,60)\letvertex M=(260,110)\letvertex N=(230,160)
\letvertex O=(200,110)\letvertex P=(170,60)\letvertex Q=(230,60)

\drawvertex(A){$\bullet$}\drawvertex(B){$\bullet$}
\drawvertex(C){$\bullet$}\drawvertex(D){$\bullet$}
\drawvertex(E){$\bullet$}\drawvertex(F){$\bullet$}
\drawvertex(G){$\bullet$}\drawvertex(H){$\bullet$}
\drawvertex(I){$\bullet$}\drawvertex(L){$\bullet$}\drawvertex(M){$\bullet$}
\drawvertex(N){$\bullet$}\drawvertex(O){$\bullet$}
\drawvertex(P){$\bullet$}\drawvertex(Q){$\bullet$}

\drawundirectededge(E,D){$a$} \drawundirectededge(D,C){$a$}
\drawundirectededge(C,B){$a$} \drawundirectededge(B,A){$a$}
\drawundirectededge(A,N){$c$} \drawundirectededge(N,M){$c$}
\drawundirectededge(M,L){$c$} \drawundirectededge(L,I){$c$}
\drawundirectededge(I,H){$b$} \drawundirectededge(H,G){$b$}
\drawundirectededge(G,F){$b$} \drawundirectededge(F,E){$b$}
\drawundirectededge(N,B){$b$} \drawundirectededge(O,C){$b$}
\drawundirectededge(M,O){$b$} \drawundirectededge(P,D){$b$}
\drawundirectededge(L,Q){$b$} \drawundirectededge(B,O){$c$}
\drawundirectededge(O,N){$a$} \drawundirectededge(C,P){$c$}
\drawundirectededge(P,G){$c$} \drawundirectededge(D,F){$c$}
\drawundirectededge(Q,M){$a$} \drawundirectededge(G,Q){$a$}
\drawundirectededge(H,L){$a$} \drawundirectededge(F,P){$a$}
\drawundirectededge(Q,H){$c$}
\end{picture}
\end{center}
\begin{os}\rm
Observe that the indices are now shifted by $1$ with respect to
the case of the rotational-invariant labelling considered in
Section \ref{rotational}: the reason is that a
rotational-invariant labelling using three labels $a,b$ and $c$
cannot be defined on a simple triangle.
\end{os}

In this section, we study the weighted spanning trees of the graph
$\Gamma_n$ endowed with the directional labelling. For each $n\geq
1$, we put:
\begin{itemize}
\item $T_n(a,b,c)=$ weighted generating function of the spanning trees of
$\Gamma_n$;
\item $U_n(a,b,c)=$ weighted generating function of the
spanning $2$-forests of $\Gamma_n$, where the leftmost and the
rightmost vertices belong to the same connected component, and the
upmost vertex belongs to the second connected component.
Similarly, by rotation, we define $R_n(a,b,c)$ (respectively
$L_n(a,b,c)$) for the spanning $2$-forests of $\Gamma_n$, where
the rightmost (respectively leftmost) vertex is not in the same
connected component containing the two other outmost vertices;
\item $Q_n(a,b,c)=$ weighted generating function of
the spanning $3$-forests of $\Gamma_n$, where the three outmost
vertices belong to three different connected components.
\end{itemize}

Observe that, in this model, we need to introduce three different
functions $U_n(a,b,c), R_n(a,b,c)$ and $L_n(a,b,c)$, since the
edge labelling is not invariant with respect to a rotation of
$\frac{2\pi}{3}$ as in the previous case. On the other hand it is
clear that, for each $n\geq 1$, one has $U_n(1,1,1)=
R_n(1,1,1)=L_n(1,1,1)$ and this common value is equal to
$S_{n-1}(1,1,1)$, where $S_n(a,b,c)$ is the generating function
introduced in Section \ref{rotational}. In what follows, we will
often omit the argument $(a,b,c)$ of the generating functions.

\begin{teo}\label{geometrichedirezionale}
For each $n\geq 1$, the weighted generating functions
$T_n(a,b,c)$, $U_n(a,b,c)$, $R_n(a,b,c)$, $L_n(a,b,c)$ and
$Q_n(a,b,c)$ satisfy the following equations:
\begin{eqnarray}\label{Tdir}
T_{n+1} =2T_{n}^2(U_{n}+R_{n}+L_{n})
\end{eqnarray}
\begin{eqnarray}\label{Udir}
U_{n+1} = T_{n}U_{n}(2R_{n}+2L_{n}+3U_{n})+T_{n}^2Q_{n}
\end{eqnarray}
\begin{eqnarray}\label{Rdir}
R_{n+1} = T_{n}R_{n}(2L_{n}+2U_{n}+3R_{n})+T_{n}^2Q_{n}
\end{eqnarray}
\begin{eqnarray}\label{Ldir}
L_{n+1} = T_{n}L_{n}(2R_{n}+2U_{n}+3L_{n})+T_{n}^2Q_{n}
\end{eqnarray}
\begin{eqnarray}\label{Qdir}
Q_{n+1}&=&4T_{n}Q_{n}(U_{n}+R_{n}+L_{n})\\
&+&2\left(U_{n}^2(R_{n}+L_{n})+R_{n}^2(L_{n}+U_{n})+L_{n}^2(R_{n}+U_{n})\right)\nonumber\\
&+& 2 U_{n}R_{n}L_{n},\nonumber
\end{eqnarray}
with initial conditions
$$
T_1(a,b,c)=ab+ac+bc \qquad U_1(a,b,c) = b\qquad R_1(a,b,c) =
a\qquad L_1(a,b,c) = c\qquad Q_1(a,b,c) = 1.
$$
\end{teo}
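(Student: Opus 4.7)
The plan is to follow the strategy of Theorem \ref{equazionimodellofacile}: decompose $\Gamma_{n+1}$ as three copies of $\Gamma_n$ --- a top copy $A$, a bottom-left copy $B$, and a bottom-right copy $C$ --- glued pairwise at their outmost vertices, so that top-of-$A$, left-of-$B$ and right-of-$C$ remain as the external outmost vertices of $\Gamma_{n+1}$ while the other three outmost vertices are identified in pairs. For each of the five equations, one enumerates the triples $(\sigma_A,\sigma_B,\sigma_C)\in\{T_n,U_n,R_n,L_n,Q_n\}^3$ of local types whose combination produces the prescribed global structure. The crucial preliminary observation is that, although the labelling is no longer rotation-invariant, it \emph{is} translation-invariant: each corner inherits the same leftmost/rightmost/topmost orientation as the ambient graph, so the five local generating functions apply to each corner without any relabelling.

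Validity of a triple is checked by listing the local components of the three corner forests and processing the three identifications in turn, each of which either merges two distinct components or creates a cycle. An Euler-characteristic count shows that, when no cycle is created, the combined subgraph has exactly $k_A+k_B+k_C-3$ connected components, where $k_X$ is the number of components of the forest in copy $X$. This restricts the relevant partitions to $k_A+k_B+k_C=4$ for \eqref{Tdir}, $=5$ for \eqref{Udir}--\eqref{Ldir}, and $=6$ for \eqref{Qdir}.

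Handling the equations one by one: for \eqref{Tdir} exactly one corner carries a $2$-forest whose isolated outmost vertex must be internal (otherwise it is disconnected from the rest), yielding $T_n^2[(L_n+R_n)+(U_n+R_n)+(U_n+L_n)]=2T_n^2(U_n+R_n+L_n)$. For \eqref{Udir}--\eqref{Ldir} the $T_n^2Q_n$ summand comes from the unique $(1,1,3)$ arrangement in which the $3$-forest is placed at the corner whose external vertex is to be isolated, and the remaining summands come from the $(1,2,2)$ arrangements whose two $2$-forest types are compatible with the required external separation. For \eqref{Qdir} the $(1,2,3)$ arrangements contribute $4T_nQ_n(U_n+R_n+L_n)$ (for each $2$-forest type, exactly four of the six position-assignments of tree/$2$-forest/$3$-forest are valid), the $(2,2,2)$ arrangements with exactly two distinct $2$-forest types contribute $2(U_n^2(R_n+L_n)+R_n^2(L_n+U_n)+L_n^2(R_n+U_n))$, and the $(2,2,2)$ arrangements with three distinct types contribute $2U_nR_nL_n$. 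The initial conditions are read off directly from the triangle $\Gamma_1$: $T_1=ab+ac+bc$ (three spanning trees by edge removal), $U_1=b$, $R_1=a$, $L_1=c$ (the $2$-forest keeps the single edge joining the two co-component vertices), and $Q_1=1$ (the empty subgraph).

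The main obstacle is the bookkeeping: the loss of rotational symmetry forbids collapsing cases by a single multiplicative factor, so each of the many configurations across the five equations must be verified individually by tracing component merges under the three identifications and checking both acyclicity and the correct external partition. These checks are routine but numerous, and the coefficients in \eqref{Qdir} in particular are easy to misattribute between $U_n$, $R_n$ and $L_n$; a systematic way to organise the enumeration is to tabulate, for each placement of local types, whether each of the three identifications is intra- or inter-component, and then read off which target partition is realised.
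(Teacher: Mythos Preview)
Your proposal is correct and follows precisely the approach indicated in the paper, which simply states that the recursive equations are obtained by the same strategy as in Theorem~\ref{equazionimodellofacile} and that the initial conditions are checked directly. In fact your write-up is considerably more explicit than the paper's own proof: you spell out the translation-invariance of the directional labelling (so that each of the three corner copies of $\Gamma_n$ carries the same $U_n,R_n,L_n$ without relabelling), the Euler-characteristic constraint $k_A+k_B+k_C=3+(\text{target components})$, and the case-by-case accounting that in the rotational model had been absorbed into a single symmetry factor.
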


\begin{proof}
It is easy to check that the initial conditions hold. Then, the
proof of each recursive equation follows the same strategy as in
Theorem \ref{equazionimodellofacile}.
\end{proof}
\begin{os}\rm Observe that, by replacing $U_n,R_n$ and $L_n$ with $S_n$, one finds again the
equations given for the rotational-invariant model in Theorem
\ref{equazionimodellofacile}.
\end{os}
\vspace{0.1cm} \noindent In order to get explicit solutions of the
equations given in Theorem \ref{geometrichedirezionale}, we put
$$
\phi_1(a,b,c)=ab+ac+bc \qquad \phi_2(a,b,c)=a+b+c
$$
$$
f(a,b,c)=3a^2b+3ab^2+3a^2c+3ac^2+3b^2c+3bc^2+7abc
$$
and let us define the function
$F:\mathbb{R}^3\longrightarrow\mathbb{R}^3$ as
$F(x,y,z)=(F_1(x,y,z),F_2(x,y,z),F_3(x,y,z))$, where
$$
F_1(x,y,z) = 3x^2+3xz+3xy+yz \qquad F_2(x,y,z) = 3y^2+3xy+3yz+xz
\qquad F_3(x,y,z)=3z^2+3xz+3yz+xy.
$$
Moreover, we denote by $F^{(k)}_i(a,b,c)$ the $i$-th coordinate of
the vector $F^{(k)}=F(\ldots F(F(a,b,c)))$, where the function $F$
is iterated $k$ times. Note that $F^{(1)}_i(a,b,c)=F_i(a,b,c)$,
for each $i=1,2,3$.

\noindent Finally, for each $k\geq 3$, put $\phi_k(a,b,c) =
\phi_{k-1}(F_1(a,b,c),F_2(a,b,c),F_3(a,b,c))$, so that
$$
\phi_k(a,b,c)=\phi_2\left(F^{(k-2)}(a,b,c)\right).
$$

\begin{teo}\label{noname}
The weighted generating functions $T_n(a,b,c)$, $U_n(a,b,c)$,
$R_n(a,b,c)$, $L_n(a,b,c)$ and $Q_n(a,b,c)$ satisfying Equations
\eqref{Tdir}, \eqref{Udir}, \eqref{Rdir}, \eqref{Ldir} and
\eqref{Qdir}, with the initial conditions given in Theorem
\ref{geometrichedirezionale}, are:
$$
T_n(a,b,c)=2^{\frac{3^n+6n-9}{12}}\prod_{k=1}^n\phi_k^{\frac{3^{n-k+1}+3}{6}}(a,b,c),
\qquad \mbox{for each }n\geq 1;
$$
$$
U_n(a,b,c)=2^{\frac{3^n-6n+3}{12}}\prod_{k=1}^{n-1}\phi_k^{\frac{3^{n-k+1}-3}{6}}(a,b,c)F_2^{(n-1)}(a,b,c),
\qquad \mbox{for each }n\geq 2;
$$
$$
R_n(a,b,c)=2^{\frac{3^n-6n+3}{12}}\prod_{k=1}^{n-1}\phi_k^{\frac{3^{n-k+1}-3}{6}}(a,b,c)F_1^{(n-1)}(a,b,c),
\qquad \mbox{for each }n\geq 2;
$$
$$
L_n(a,b,c)=2^{\frac{3^n-6n+3}{12}}\prod_{k=1}^{n-1}\phi_k^{\frac{3^{n-k+1}-3}{6}}(a,b,c)F_3^{(n-1)}(a,b,c),
\qquad \mbox{for each }n\geq 2;
$$
$$
Q_n(a,b,c)=2^{\frac{3^n-18n+39}{12}}\prod_{k=1}^{n-2}\phi_k^{\frac{3^{n-k+1}-9}{6}}(a,b,c)f\left(F^{(n-2)}(a,b,c)\right)
\qquad \mbox{for each }n\geq 3,
$$
with $U_1(a,b,c)=b, R_1(a,b,c)=a,  L_1(a,b,c)=c$, $Q_1(a,b,c)=1$
and $Q_2(a,b,c)=2f(a,b,c)$.
\end{teo}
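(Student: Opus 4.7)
The approach is strong induction on $n$. The base cases I would verify by direct substitution into the initial data of Theorem~\ref{geometrichedirezionale}: for $n=1$ the formulas reduce to $T_1 = \phi_1$, $U_1 = b$, $R_1 = a$, $L_1 = c$, and a short expansion of \eqref{Qdir} at $n=1$ confirms that $Q_2 = 2f(a,b,c)$.

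For the inductive step, I would substitute the proposed formulas into the recursions \eqref{Tdir}--\eqref{Qdir}. It is convenient to abbreviate $(x,y,z) := F^{(n-1)}(a,b,c)$ and to let $\alpha, \beta$ denote the common scalar prefactors predicted by the theorem, so that $R_n = \alpha x$, $U_n = \alpha y$, $L_n = \alpha z$, and $T_n = \beta$. Then \eqref{Tdir} is immediate, since $U_n+R_n+L_n = \alpha(x+y+z) = \alpha\,\phi_2(F^{(n-1)}(a,b,c)) = \alpha\,\phi_{n+1}(a,b,c)$, which yields $T_{n+1} = 2\alpha\beta^2\phi_{n+1}$.

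The crucial algebraic fact I would prove first is the identity
\[
xy + yz + xz = 4\,(u+v+w)\,f(u,v,w) \qquad \text{whenever } (x,y,z) = F(u,v,w),
\]
which follows from the factorization $F_i(u,v,w) = 3u_i(u+v+w) + u_j u_k$ (for $\{i,j,k\} = \{1,2,3\}$) together with the elementary identity $f(u,v,w) = 3(u+v+w)(uv+uw+vw) - 2uvw$. Applied at level $n-2$, this reads $xy+yz+xz = 4\phi_n\,f(F^{(n-2)}(a,b,c))$, and comparing scalar prefactors produces the useful identity $\beta^2 Q_n = \alpha^2\beta\,(xy+yz+xz)$. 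Plugging this into \eqref{Udir} gives
\[
U_{n+1} = \alpha^2\beta(3y^2 + 2xy + 2yz) + \beta^2 Q_n = \alpha^2\beta(3y^2 + 3xy + 3yz + xz) = \alpha^2\beta\,F_2(x,y,z),
\]
which equals $\alpha^2\beta\,F_2^{(n)}(a,b,c)$, as required; the formulas for $R_{n+1}$ and $L_{n+1}$ follow identically with $F_1$ and $F_3$ in place of $F_2$. For $Q_{n+1}$ I would combine the symmetric-function identity $y^2(x+z)+x^2(y+z)+z^2(x+y) = (x+y+z)(xy+xz+yz)-3xyz$ with $f(x,y,z) = 3(x+y+z)(xy+xz+yz)-2xyz$; after cancellation the right-hand side of \eqref{Qdir} collapses to $2\alpha^3\,f(x,y,z) = 2\alpha^3\,f(F^{(n-1)}(a,b,c))$.

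The main obstacle is not conceptual but the bookkeeping of the exponents: one must check that the scalar prefactors $2\alpha\beta^2$, $\alpha^2\beta$ and $2\alpha^3$ coming out of the recursions really coincide with the closed-form coefficients claimed in the theorem, and that the ranges $\prod_{k=1}^{\ldots}$ line up correctly across the induction. Each such verification reduces to an arithmetic identity on the exponents of $2$ and of the $\phi_k$; for instance, the exponent of $\phi_k$ in $\alpha^2\beta$ is $2\cdot\frac{3^{n-k+1}-3}{6}+\frac{3^{n-k+1}+3}{6} = \frac{3^{n-k+2}-3}{6}$, matching the claimed exponent of $\phi_k$ in $U_{n+1}$, and the boundary index $k=n-1$ in the product for $Q_n$ is absorbed because $\frac{3^{2}-9}{6} = 0$. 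I expect these coefficient checks to be the lengthiest but most mechanical part of the argument, so the paper is likely correct to omit them.
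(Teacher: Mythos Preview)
Your proposal is correct and follows the same inductive strategy as the paper, which verifies the base cases directly and then checks that the claimed formulas satisfy the recursions \eqref{Tdir}--\eqref{Qdir}. In fact you go further than the paper: it carries out only the $T_{n+1}$ verification explicitly and dismisses the remaining cases as ``similar but more complicated,'' whereas your identity $xy+yz+xz=4(u+v+w)f(u,v,w)$ for $(x,y,z)=F(u,v,w)$, together with the consequence $\beta^2 Q_n=\alpha^2\beta(xy+yz+xz)$, is exactly the missing lemma that makes the $U,R,L,Q$ verifications transparent.
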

\begin{proof}
The proof works by induction on $n$. One can directly find:
$$
U_2(a,b,c)=\phi_1(a,b,c)F_2(a,b,c) \qquad
R_2(a,b,c)=\phi_1(a,b,c)F_1(a,b,c) \qquad
L_2(a,b,c)=\phi_1(a,b,c)F_3(a,b,c)
$$
$$
T_1(a,b,c)=\phi_1(a,b,c) \qquad Q_3(a,b,c) = 2\phi_1^3(a,b,c)
f(F_1(a,b,c),F_2(a,b,c),F_3(a,b,c)),
$$
and so the basis of induction holds. We only prove the assertion
for $T_n(a,b,c)$, by showing that Equation \eqref{Tdir} is
satisfied (the computations in the other cases are similar but
more complicated). One has:
\begin{eqnarray*}
2T_{n}^2(U_{n}+R_{n}+L_{n}) &=& 2\cdot
2^{\frac{3^{n}+6n-9}{6}}\prod_{k=1}^{n}\phi_k^{\frac{3^{n-k+1}+3}{3}}(a,b,c)\cdot
2^{\frac{3^{n}-6n+3}{12}}\prod_{k=1}^{n-1}
\phi_k^{\frac{3^{n-k+1}-3}{6}}(a,b,c)\\
&\cdot&
\left(F_2^{(n-1)}(a,b,c)+F_1^{(n-1)}(a,b,c)+F_3^{(n-1)}(a,b,c)\right)\\
&=&
2^{\frac{3^{n+1}+6n-3}{12}}\prod_{k=1}^{n}\phi_k^{\frac{3^{n-k+2}+3}{6}}(a,b,c)\\&\cdot&\left(F_2^{(n-1)}(a,b,c)+F_1^{(n-1)}(a,b,c)+F_3^{(n-1)}(a,b,c)\right)\\
&=&
2^{\frac{3^{n+1}+6n-3}{12}}\prod_{k=1}^{n}\phi_k^{\frac{3^{n-k+2}+3}{6}}(a,b,c)\phi_2\!\left(F^{(n-1)}(a,b,c)\right)\\
&=&
2^{\frac{3^{n+1}+6n-3}{12}}\prod_{k=1}^{n+1}\phi_k^{\frac{3^{n-k+2}+3}{6}}(a,b,c)=T_{n+1}.
\end{eqnarray*}
\end{proof}

\subsection{Third model: the \lq\lq Schreier\rq\rq labelling}

Consider the graph $\Gamma_1$ in the picture below and define by
recurrence, for each $n\geq 1$, the graph $\Gamma_{n+1}$ as
constituted by the union of three copies of $\Gamma_{n}$ in the
following way: for each one of the outmost vertices of
$\Gamma_{n+1}$, the corresponding copy is given by the graph
$\Gamma_{n}$, reflected with respect to the bisectrix of the
corresponding angle.\unitlength=0.2mm
\begin{center}
\begin{picture}(400,105)

\put(55,30){$\Gamma_1$}\put(215,30){$\Gamma_2$}

\letvertex A=(120,60)\letvertex B=(90,10)\letvertex C=(150,10)

\letvertex D=(310,110)\letvertex E=(280,60)\letvertex F=(250,10)\letvertex G=(310,10)
\letvertex H=(370,10)\letvertex I=(340,60)

\drawvertex(A){$\bullet$}\drawvertex(B){$\bullet$}
\drawvertex(C){$\bullet$}\drawvertex(D){$\bullet$}
\drawvertex(E){$\bullet$}\drawvertex(F){$\bullet$}
\drawvertex(G){$\bullet$}\drawvertex(H){$\bullet$}
\drawvertex(I){$\bullet$}

\drawundirectededge(B,A){$a$} \drawundirectededge(C,B){$b$}
\drawundirectededge(A,C){$c$} \drawundirectededge(E,D){$c$}
\drawundirectededge(F,E){$b$} \drawundirectededge(G,F){$a$}

\drawundirectededge(H,G){$c$} \drawundirectededge(I,H){$b$}
\drawundirectededge(D,I){$a$} \drawundirectededge(I,E){$b$}
\drawundirectededge(E,G){$c$} \drawundirectededge(G,I){$a$}
\end{picture}
\end{center}
\begin{center}
\begin{picture}(400,200)

\put(95,110){$\Gamma_3$}

\letvertex A=(200,210)\letvertex B=(170,160)\letvertex C=(140,110)
\letvertex D=(110,60)\letvertex E=(80,10)\letvertex F=(140,10)\letvertex G=(200,10)
\letvertex H=(260,10)\letvertex I=(320,10)
\letvertex L=(290,60)\letvertex M=(260,110)\letvertex N=(230,160)
\letvertex O=(200,110)\letvertex P=(170,60)\letvertex Q=(230,60)

\drawvertex(A){$\bullet$}\drawvertex(B){$\bullet$}
\drawvertex(C){$\bullet$}\drawvertex(D){$\bullet$}
\drawvertex(E){$\bullet$}\drawvertex(F){$\bullet$}
\drawvertex(G){$\bullet$}\drawvertex(H){$\bullet$}
\drawvertex(I){$\bullet$}\drawvertex(L){$\bullet$}\drawvertex(M){$\bullet$}
\drawvertex(N){$\bullet$}\drawvertex(O){$\bullet$}
\drawvertex(P){$\bullet$}\drawvertex(Q){$\bullet$}

\drawundirectededge(E,D){$a$} \drawundirectededge(D,C){$c$}
\drawundirectededge(C,B){$b$} \drawundirectededge(B,A){$a$}
\drawundirectededge(A,N){$c$} \drawundirectededge(N,M){$b$}
\drawundirectededge(M,L){$a$} \drawundirectededge(L,I){$c$}
\drawundirectededge(I,H){$b$} \drawundirectededge(H,G){$a$}
\drawundirectededge(G,F){$c$} \drawundirectededge(F,E){$b$}
\drawundirectededge(N,B){$b$} \drawundirectededge(O,C){$c$}
\drawundirectededge(M,O){$a$} \drawundirectededge(P,D){$a$}
\drawundirectededge(L,Q){$c$} \drawundirectededge(B,O){$a$}
\drawundirectededge(O,N){$c$} \drawundirectededge(C,P){$b$}
\drawundirectededge(P,G){$a$} \drawundirectededge(D,F){$c$}
\drawundirectededge(Q,M){$b$} \drawundirectededge(G,Q){$c$}
\drawundirectededge(H,L){$a$}\drawundirectededge(F,P){$b$}
\drawundirectededge(Q,H){$b$}
\end{picture}
\end{center}
\begin{os}\rm
For each $n\geq 1$, the graph $\Gamma_n$ coincides with the graph
obtained from the Schreier graph $\Sigma_n$ of the Hanoi Towers
Group $H^{(3)}$ by deleting the three loops and by contracting all
the edges joining two different elementary triangles, keeping the
labels in $\Sigma_n$ on the remaining edges. (See Section
\ref{SchreierGraphs}.)
\end{os}

Define the generating functions $T_n(a,b,c)$, $U_n(a,b,c)$,
$R_n(a,b,c)$, $L_n(a,b,c)$ and $Q_n(a,b,c)$ to have the same
meaning as in the case of the directional labelling (Section
\ref{directional}). In what follows, we will often omit the
argument $(a,b,c)$ of the generating functions.

\begin{teo}\label{equazionisoddisfatte}
For each $n\geq 1$, the weighted generating functions
$T_n(a,b,c)$, $U_n(a,b,c)$, $R_n(a,b,c)$, $L_n(a,b,c)$ and
$Q_n(a,b,c)$ satisfy the following equations:
\begin{eqnarray}\label{1vec}
T_{n+1} =2T_{n}^2\left(U_{n}+R_{n}+L_{n}\right)
\end{eqnarray}
\begin{eqnarray}\label{2vec}
U_{n+1} =
T_{n}\left(3L_{n}R_{n}+U_{n}R_{n}+U_{n}L_{n}+2U_{n}^2\right)+T_{n}^2Q_{n}
\end{eqnarray}
\begin{eqnarray}\label{4vec}
R_{n+1} =
T_{n}\left(3U_{n}L_{n}+U_{n}R_{n}+R_{n}L_{n}+2R_{n}^2\right)+T_{n}^2Q_{n}
\end{eqnarray}
\begin{eqnarray}\label{3vec}
L_{n+1} =
T_{n}\left(3U_{n}R_{n}+L_{n}U_{n}+R_{n}L_{n}+2L_{n}^2\right)+T_{n}^2Q_{n}
\end{eqnarray}
\begin{eqnarray}\label{5vec}
Q_{n+1} &=&4T_{n}Q_{n}\left(U_{n}+R_{n}+L_{n}\right)\\
&+&2\left(U_{n}^2(L_{n}+R_{n})+R_{n}^2(U_{n}+L_{n})+L_{n}^2(R_{n}+U_{n})\right)
\nonumber\\
&+&2U_{n}R_{n}L_{n},\nonumber
\end{eqnarray}
with initial conditions
$$
T_1(a,b,c)=ab+ac+bc\qquad U_1(a,b,c) = b\qquad R_1(a,b,c) =
a\qquad L_1(a,b,c) = c\qquad Q_1(a,b,c) = 1.
$$
\end{teo}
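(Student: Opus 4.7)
The plan is to mimic the combinatorial, picture-by-picture argument of Theorems \ref{equazionimodellofacile} and \ref{geometrichedirezionale}, but adapted to the Schreier-type labelling. The decisive new feature is that $\Gamma_{n+1}$ is built by placing, at each corner of a large triangle, a copy of $\Gamma_n$ reflected across the bisectrix of that corner; consequently, the roles played by the three outmost vertices of $\Gamma_n$ (and hence by $U_n, R_n, L_n$) get permuted from one copy to the next in a way that is not rotationally symmetric, and this is precisely what produces the asymmetric coefficients $3L_nR_n$, $3U_nL_n$, $3U_nR_n$ appearing in \eqref{2vec}, \eqref{4vec}, \eqref{3vec}.

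First, I would verify the initial conditions at $n=1$ directly from the picture of $\Gamma_1$: the three spanning trees of the labelled triangle give $T_1(a,b,c)=ab+ac+bc$; the unique $2$-forest isolating the upmost vertex consists of the single horizontal edge, so $U_1(a,b,c)=b$, and analogously $R_1(a,b,c)=a$ and $L_1(a,b,c)=c$; the empty subgraph is the unique $3$-forest, giving $Q_1(a,b,c)=1$.

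Second, for each of the five recursions I would perform the same enumeration as in Theorem \ref{equazionimodellofacile}: classify the local configurations on each of the three $\Gamma_n$-subcopies of $\Gamma_{n+1}$ (spanning tree, $2$-forest of type $U$, $R$ or $L$, or $3$-forest), and then determine which triples of local configurations glue, after identifying the inner pairs of outmost vertices, to a global spanning tree, a global $2$-forest with a prescribed outmost vertex isolated, or a global $3$-forest of $\Gamma_{n+1}$. Since the global component structure on the outmost vertices of $\Gamma_{n+1}$ is completely determined by the local component structures on the inner outmost vertices of the copies, this reduces to a finite case analysis entirely parallel to the ones already carried out for \eqref{Tfacile}--\eqref{Qfacile} and \eqref{Tdir}--\eqref{Qdir}.

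Third, I would track the weights carefully. The essential bookkeeping point, already visible in the labelling of the drawn $\Gamma_2$ and $\Gamma_3$, is that a $2$-forest which isolates the upmost outmost vertex of a $\Gamma_n$-copy is weighted by $U_n$ when the copy sits in the top corner of $\Gamma_{n+1}$, but by a reflected version of $U_n$ (obtained by swapping two of the labels $a, b, c$) when the copy sits in a bottom corner. Collecting the three reflected contributions at the three corners yields the asymmetric combinations of \eqref{2vec}, \eqref{4vec}, \eqref{3vec}; the terms $T_n^2 Q_n$, the equation \eqref{1vec} for $T_{n+1}$, and \eqref{5vec} for $Q_{n+1}$ are, in contrast, fully symmetric in $U_n, R_n, L_n$ and follow from the same three-corner sum as in the directional model, which is why they agree with the directional counterparts of Theorem \ref{geometrichedirezionale}. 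The main obstacle is precisely this relabelling bookkeeping in step three: in Theorem \ref{equazionimodellofacile} the rotational invariance collapses $U_n, R_n, L_n$ into the single function $S_n$, and in Theorem \ref{geometrichedirezionale} the directional labelling is preserved copy-by-copy so no relabelling is needed; in the Schreier model neither simplification applies, so one must, for each recursive equation, draw the full list of admissible local pictures, apply the appropriate corner reflection, and verify that the reflected weights sum to the stated coefficients. Once this enumeration is in place, the identities \eqref{1vec}--\eqref{5vec} follow immediately, and the explicit diagrammatic computations can be safely omitted in the final write-up, exactly as in Theorem \ref{geometrichedirezionale}.
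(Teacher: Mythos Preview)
Your proposal is correct and follows essentially the same approach as the paper, whose proof consists entirely of the sentence ``the proof of each recursive equation follows the same strategy as in Theorem~\ref{equazionimodellofacile}.'' Your elaboration of the reflection bookkeeping---that each corner copy of $\Gamma_n$ is placed after reflecting across the corresponding angle bisectrix, so that the roles of $U_n,R_n,L_n$ are permuted from copy to copy---is precisely the point the paper leaves implicit; one minor remark is that you need not pass through the label-swap interpretation at all, since it suffices to track which outmost vertex of $\Gamma_n$ lands at which position of each subcopy and read off the corresponding $U_n$, $R_n$, or $L_n$ directly.
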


\begin{proof}
It is easy to check that the initial conditions hold. Then, the
proof of each recursive equation follows the same strategy as in
Theorem \ref{equazionimodellofacile}.
\end{proof}
\begin{os}\rm Observe that, by replacing $U_n$, $R_n$ and $L_n$ with $S_n$, one finds again the
equations obtained for the rotational-invariant model in Theorem
\ref{equazionimodellofacile}.
\end{os}

Put
$$
\psi_1(a,b,c)=ab+ac+bc  \qquad \psi_2(a,b,c)=a+b+c
$$
$$
g(a,b,c)=3a^2b+3ab^2+3a^2c+3ac^2+3b^2c+3bc^2+7abc
$$
and let us define the function
$G:\mathbb{R}^3\longrightarrow\mathbb{R}^3$ as
$G(x,y,z)=(G_1(x,y,z),G_2(x,y,z),G_3(x,y,z))$, where
$$
G_1(x,y,z)=x^2+2yz+xy+xz \qquad G_2(x,y,z)=y^2+2xz+xy+yz \qquad
G_3(x,y,z)= z^2+2xy+xz+yz.
$$
Finally, for each $k\geq 3$, put $\psi_k(a,b,c) =
\psi_{k-1}(G_1(a,b,c),G_2(a,b,c),G_3(a,b,c))$, so that
$$
\psi_k(a,b,c)=\psi_2\left(G^{(k-2)}(a,b,c)\right).
$$
\begin{teo}\label{noname2}
The weighted generating functions $T_n(a,b,c)$, $U_n(a,b,c)$,
$R_n(a,b,c)$, $L_n(a,b,c)$ and $Q_n(a,b,c)$ satisfying Equations
\eqref{1vec}, \eqref{2vec}, \eqref{4vec}, \eqref{3vec} and
\eqref{5vec}, with the initial conditions given in Theorem
\ref{equazionisoddisfatte}, are:
$$
T_n(a,b,c)=2^{\frac{3^{n-1}-1}{2}}\prod_{k=1}^n\psi_k^{\frac{3^{n-k}+1}{2}}(a,b,c)
\qquad \mbox{for each }n\geq 1;
$$
$$
U_n(a,b,c)=2^{\frac{3^{n-1}-1}{2}}\prod_{k=1}^{n-1}
\psi_k^{\frac{3^{n-k}-1}{2}}(a,b,c)G_2^{(n-1)}(a,b,c) \qquad
\mbox{for each }n\geq 2;
$$
$$
R_n(a,b,c)=2^{\frac{3^{n-1}-1}{2}}\prod_{k=1}^{n-1}
\psi_k^{\frac{3^{n-k}-1}{2}}(a,b,c)G_1^{(n-1)}(a,b,c) \qquad
\mbox{for each }n\geq 2;
$$
$$
L_n(a,b,c)=2^{\frac{3^{n-1}-1}{2}}\prod_{k=1}^{n-1}
\psi_k^{\frac{3^{n-k}-1}{2}}(a,b,c)G_3^{(n-1)}(a,b,c) \qquad
\mbox{for each }n\geq 2;
$$
$$
Q_n(a,b,c)=2^{\frac{3^{n-1}-1}{2}}\prod_{k=1}^{n-2}\psi_k^{\frac{3^{n-k}-3}{2}}(a,b,c)g\left(G^{(n-2)}(a,b,c)\right)
\qquad \mbox{for each }n\geq 3,
$$
with $U_1(a,b,c) = b, R_1(a,b,c) = a, L_1(a,b,c) = c$, $Q_1(a,b,c)
= 1$ and $Q_2(a,b,c) = 2g(a,b,c)$.
\end{teo}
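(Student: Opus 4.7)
The plan is to prove this by induction on $n$, paralleling the strategy of Theorem \ref{noname}. First, I would dispose of the base cases $n = 1$ and $n = 2$ (and also $n = 3$, since the formula for $Q_n$ is only stated for $n \geq 3$) by substituting the initial conditions of Theorem \ref{equazionisoddisfatte} into the recursions \eqref{1vec}--\eqref{5vec} and checking that they agree with the claimed formulas; the only nontrivial computation here is for $Q_3$, which one verifies by direct expansion of $2g(a,b,c)$ through equation \eqref{5vec}.

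For the inductive step, I would introduce the shorthand $(A_k, B_k, C_k) := G^{(k)}(a,b,c)$ and the common prefactor
$$
P_n := 2^{(3^{n-1}-1)/2}\prod_{k=1}^{n-1}\psi_k^{(3^{n-k}-1)/2}(a,b,c),
$$
so that the conjectured formulas read $R_n = P_n A_{n-1}$, $U_n = P_n B_{n-1}$, $L_n = P_n C_{n-1}$, $T_n = P_n\prod_{k=1}^{n}\psi_k(a,b,c)$, and $Q_n$ a similar expression involving $g(A_{n-2},B_{n-2},C_{n-2})$. Three preliminary identities then drive the whole argument: the exponent-matching identity $P_{n+1} = 2 T_n P_n^2$; the sum identity $A_{n-1} + B_{n-1} + C_{n-1} = \psi_{n+1}(a,b,c)$, immediate from the definition of $\psi_k$; and the key polynomial identity
$$
\psi_1\bigl(G(x,y,z)\bigr) = \psi_2(x,y,z)\,g(x,y,z),
$$
which one proves by writing $G_i(x,y,z) = x_i(x+y+z) + 2x_jx_\ell$ for $\{i,j,\ell\}=\{1,2,3\}$, expanding, and regrouping via the elementary relation $g = 3\psi_1\psi_2 - 2xyz$.

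With these ingredients in place, equation \eqref{1vec} is immediate: substitution yields $T_{n+1} = 2T_n^2 P_n \psi_{n+1}$, which matches $P_{n+1}\prod_{k=1}^{n+1}\psi_k$ by the exponent identity. Equations \eqref{2vec}, \eqref{4vec} and \eqref{3vec} are symmetric and, after dividing through by $T_n P_n^2 = P_{n+1}/2$, all three collapse to the same auxiliary identity $T_n Q_n / P_n^2 = \psi_1(A_{n-1}, B_{n-1}, C_{n-1})$; this in turn reduces, via the key polynomial identity applied to $(A_{n-2}, B_{n-2}, C_{n-2})$, to pure exponent bookkeeping using the inductive formula for $Q_n$. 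Equation \eqref{5vec} yields, after substitution and the simplification $\sum x^2(y+z) + xyz = (g - 4xyz)/3$ together with $\psi_1\psi_2 = (g + 2xyz)/3$, the expression $Q_{n+1} = 2 P_n^3\, g(A_{n-1}, B_{n-1}, C_{n-1})$, which matches the conjectured prefactor by the exponent identity once more.

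The main obstacle is establishing the polynomial identity $\psi_1 \circ G = \psi_2 \cdot g$ cleanly: this single identity is what ties together the iteration of the map $G$ with the degree-three symmetric polynomial $g$ governing the $Q$-sector, and it underlies both the collapse of \eqref{2vec}--\eqref{4vec} into a single statement and the telescoping in \eqref{5vec}. All the remaining steps are either symmetric extensions of the $T_n$ argument of Theorem \ref{noname} or routine bookkeeping on the exponents of $2$ and $\psi_k$.
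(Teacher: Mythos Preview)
Your proposal is correct and follows the same inductive strategy as the paper's proof: verify the base cases, then check that the claimed formulas satisfy the recursions \eqref{1vec}--\eqref{5vec}. In fact your argument is more complete than the paper's, which only carries out the verification of \eqref{1vec} for $T_n$ explicitly and dismisses the remaining four recursions as ``similar but more complicated''; your identification and proof of the polynomial identity $\psi_1\bigl(G(x,y,z)\bigr)=\psi_2(x,y,z)\,g(x,y,z)$ is precisely the content the paper leaves to the reader, and it is indeed what makes the $U,R,L,Q$ cases go through.
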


\begin{proof}
The proof is by induction on $n$. One can directly find:
$$
U_2(a,b,c)=2\psi_1(a,b,c)G_2(a,b,c) \quad
R_2(a,b,c)=2\psi_1(a,b,c)G_1(a,b,c) \quad
L_2(a,b,c)=2\psi_1(a,b,c)G_3(a,b,c)
$$
$$
T_1(a,b,c)=\psi_1(a,b,c)\qquad Q_3(a,b,c) = 2^4\psi_1^3(a,b,c)
g(G_1(a,b,c),G_2(a,b,c),G_3(a,b,c)),
$$
and so the basis on the induction holds. We only prove the
assertion for $T_n(a,b,c)$, by showing that Equation \eqref{1vec}
is satisfied (the computations in the other cases are similar but
more complicated). One has:
\begin{eqnarray*}
2T_{n}^2(U_{n}+R_{n}+L_{n}) &=&2\cdot
2^{3^{n-1}-1}\prod_{k=1}^{n}\psi_k^{3^{n-k}+1}(a,b,c)\cdot
2^{\frac{3^{n-1}-1}{2}}\prod_{k=1}^{n-1}
\psi_k^{\frac{3^{n-k}-1}{2}}(a,b,c)\\
&\cdot& (G_2^{(n-1)}(a,b,c)+G_1^{(n-1)}(a,b,c)+G_3^{(n-1)}(a,b,c))\\
&=&2^{\frac{3^{n}-1}{2}}\prod_{k=1}^{n}\psi_k^{\frac{3^{n-k+1}+1}{2}}(a,b,c)\psi_2(G^{(n-1)}(a,b,c))\\
&=&2^{\frac{3^{n}-1}{2}}\prod_{k=1}^{n+1}\psi_k^{\frac{3^{n-k+1}+1}{2}}(a,b,c)=T_{n+1}.
\end{eqnarray*}
\end{proof}
\begin{os}\rm
Note that the function $g(a,b,c)$ coincides with the function
$f(a,b,c)$, introduced in Section \ref{directional}. However, the
functions $G(x,y,z)$ and $F(x,y,z)$ do not coincide, which implies
that the functions $\psi_i(a,b,c)$ and $\phi_i(a,b,c)$,
factorizing the weighted generating functions of the spanning
trees in the directional model and in the Schreier model, are
different.
\end{os}


\section{Spanning trees on the Schreier graphs of the Hanoi Towers
group}\label{sectionhanoi}

In this section, we study spanning trees on the Schreier graphs of
the Hanoi Towers group $H^{(3)}$, which are very similar to the
Sierpi\'{n}ski graphs studied in the previous sections. We start
with a combinatorial recursive approach as in Section
\ref{sectionsierpinski}, but this leads us to some equations that
we can explicitly solve only in the unweighted case. Hence, in
order to compute the weighted generating function, we follow a
different approach: we use the self-similar presentation of the
generators of the group to write the adjacency matrix of the
graphs. Therefore, we are able to write the associated Laplace
matrix and then we get the weighted generating function of the
spanning trees by using a weighted version of Kirchhoff's
Matrix-Tree Theorem. The self-similarity of the group is the key
property that we use.

\subsection{The Schreier graphs of the Hanoi Towers group}\label{SchreierGraphs}

Recall that the Hanoi Towers group $H^{(3)}$ is generated by the
automorphisms of the ternary rooted tree having the following
self-similar form:
$$
a= (01)(id,id,a) \qquad  b=(02)(id,b,id) \qquad c=(12)(c,id,id),
$$
where $(01), (02)$ and $(12)$ are elements of the symmetric group
$Sym(3)$, acting on the set $X=\{0,1,2\}$. Observe that $a,b,c$
are involutions. The associated Schreier graphs are self-similar
in the sense of \cite{wagnerself}, that is, $\Sigma_{n+1}$
contains three copies of $\Sigma_{n}$  glued together by three
edges, that we call \lq\lq exceptional\rq\rq. By Definition
\ref{defischreiernovembre}, each vertex of $\Sigma_n$ corresponds
to a word of length $n$ in the alphabet $X$. The graphs
$\{\Sigma_n\}_{n\geq 1}$ can be recursively constructed via the
following substitutional rules \cite{hanoi}.\unitlength=0.3mm

\begin{center}
\begin{picture}(400,115)
\letvertex A=(240,10)\letvertex B=(260,44)
\letvertex C=(280,78)\letvertex D=(300,112)
\letvertex E=(320,78)\letvertex F=(340,44)
\letvertex G=(360,10)\letvertex H=(320,10)\letvertex I=(280,10)

\letvertex L=(70,30)\letvertex M=(130,30)
\letvertex N=(100,80)

\put(236,-1){$00u$}\put(239,42){$20u$}\put(259,75){$21u$}
\put(295,116){$11u$}\put(323,75){$01u$}\put(343,42){$02u$}\put(353,-1){$22u$}
\put(315,-1){$12u$}\put(275,-1){$10u$}

\put(67,18){$0u$}\put(126,18){$2u$}\put(95,84){$1u$}\put(188,60){$\Longrightarrow$}
\put(0,60){Rule I}

\drawvertex(A){$\bullet$}\drawvertex(B){$\bullet$}
\drawvertex(C){$\bullet$}\drawvertex(D){$\bullet$}
\drawvertex(E){$\bullet$}\drawvertex(F){$\bullet$}
\drawvertex(G){$\bullet$}\drawvertex(H){$\bullet$}
\drawvertex(I){$\bullet$}
\drawundirectededge(A,B){$b$}\drawundirectededge(B,C){$a$}\drawundirectededge(C,D){$c$}
\drawundirectededge(D,E){$a$}\drawundirectededge(E,C){$b$}\drawundirectededge(E,F){$c$}\drawundirectededge(F,G){$b$}
\drawundirectededge(B,I){$c$}\drawundirectededge(H,F){$a$}\drawundirectededge(H,I){$b$}
\drawundirectededge(I,A){$a$}\drawundirectededge(G,H){$c$}

\drawvertex(L){$\bullet$}
\drawvertex(M){$\bullet$}\drawvertex(N){$\bullet$}
\drawundirectededge(M,L){$b$}\drawundirectededge(N,M){$c$}\drawundirectededge(L,N){$a$}

\end{picture}
\end{center}

\begin{center}
\begin{picture}(400,120)
\letvertex A=(240,10)\letvertex B=(260,44)
\letvertex C=(280,78)\letvertex D=(300,112)
\letvertex E=(320,78)\letvertex F=(340,44)
\letvertex G=(360,10)\letvertex H=(320,10)\letvertex I=(280,10)

\letvertex L=(70,30)\letvertex M=(130,30)
\letvertex N=(100,80)

\put(236,-1){$00u$}\put(239,42){$10u$}\put(259,75){$12u$}
\put(295,116){$22u$}\put(323,75){$02u$}\put(343,42){$01u$}\put(353,-1){$11u$}
\put(315,-1){$21u$}\put(275,-1){$20u$}

\put(67,18){$0u$}\put(126,18){$1u$}\put(95,84){$2u$}\put(188,60){$\Longrightarrow$}
\put(0,60){Rule II}
\drawvertex(A){$\bullet$}\drawvertex(B){$\bullet$}
\drawvertex(C){$\bullet$}\drawvertex(D){$\bullet$}
\drawvertex(E){$\bullet$}\drawvertex(F){$\bullet$}
\drawvertex(G){$\bullet$}\drawvertex(H){$\bullet$}
\drawvertex(I){$\bullet$}
\drawundirectededge(A,B){$a$}\drawundirectededge(B,C){$b$}\drawundirectededge(C,D){$c$}
\drawundirectededge(D,E){$b$}\drawundirectededge(E,C){$a$}\drawundirectededge(E,F){$c$}\drawundirectededge(F,G){$a$}
\drawundirectededge(B,I){$c$}\drawundirectededge(H,F){$b$}\drawundirectededge(H,I){$a$}
\drawundirectededge(I,A){$b$}\drawundirectededge(G,H){$c$}

\drawvertex(L){$\bullet$}
\drawvertex(M){$\bullet$}\drawvertex(N){$\bullet$}
\drawundirectededge(M,L){$a$}\drawundirectededge(N,M){$c$}\drawundirectededge(L,N){$b$}
\end{picture}
\end{center}
\begin{center}
\begin{picture}(400,60)
\letvertex A=(50,10)\letvertex B=(100,10)
\letvertex C=(175,10)\letvertex D=(225,10)
\letvertex E=(300,10)\letvertex F=(350,10)
\letvertex G=(50,50)\letvertex H=(100,50)
\letvertex I=(175,50)\letvertex L=(225,50)
\letvertex M=(300,50)\letvertex N=(350,50)

\put(45,54){$0u$}\put(45,-1){$0v$}
\put(95,-1){$00v$}\put(95,54){$00u$}\put(170,-1){$1v$}\put(170,54){$1u$}\put(220,-1){$11v$}
\put(220,54){$11u$}\put(295,-1){$2v$}\put(295,54){$2u$}\put(345,-1){$22v$}\put(345,54){$22u$}

\put(68,27){$\Longrightarrow$}\put(193,27){$\Longrightarrow$}\put(318,27){$\Longrightarrow$}
\put(-7,30){Rule III} \put(122,30){Rule IV} \put(252,30){Rule V}

\drawvertex(A){$\bullet$}\drawvertex(B){$\bullet$}
\drawvertex(C){$\bullet$}\drawvertex(D){$\bullet$}
\drawvertex(E){$\bullet$}\drawvertex(F){$\bullet$}
\drawvertex(G){$\bullet$}\drawvertex(H){$\bullet$}
\drawvertex(I){$\bullet$}\drawvertex(L){$\bullet$}
\drawvertex(M){$\bullet$}\drawvertex(N){$\bullet$}

\drawundirectededge(A,G){$c$}\drawundirectededge(B,H){$c$}\drawundirectededge(C,I){$b$}
\drawundirectededge(D,L){$b$}\drawundirectededge(E,M){$a$}\drawundirectededge(F,N){$a$}
\end{picture}
\end{center}
Notice that the word $u$ in Rules I and II can also be empty and
the words $u$ and $v$ in Rules III, IV, V can also satisfy $u=v$
(in this case we get the three loops of $\Sigma_n$). The starting
point of this recursive construction is the Schreier graph
$\Sigma_1$ of the first level. We also draw a picture of
$\Sigma_2$.
\begin{center}
\begin{picture}(400,125)

\letvertex L=(60,10)\letvertex M=(120,10)
\letvertex N=(90,60)

\put(57,-2){$0$}\put(117,-2){$2$}\put(95,56){$1$}\put(40,60){$\Sigma_1$}

\drawvertex(L){$\bullet$}
\drawvertex(M){$\bullet$}\drawvertex(N){$\bullet$}
\drawundirectededge(M,L){$b$}\drawundirectededge(N,M){$c$}\drawundirectededge(L,N){$a$}

\drawundirectedloop[r](M){$a$}\drawundirectedloop(N){$b$}\drawundirectedloop[l](L){$c$}

\letvertex A=(200,10)\letvertex B=(220,44)
\letvertex C=(240,78)

\letvertex D=(260,112)
\letvertex E=(280,78)\letvertex F=(300,44)
\letvertex G=(320,10)\letvertex H=(280,10)\letvertex I=(240,10)

\put(197,-1){$00$}\put(205,42){$20$}\put(226,75){$21$}
\put(266,109){$11$}\put(283,75){$01$}\put(303,42){$02$}\put(310,-1){$22$}
\put(275,-1){$12$}\put(235,-1){$10$}\put(178,60){$\Sigma_2$}

\drawvertex(A){$\bullet$}\drawvertex(B){$\bullet$}
\drawvertex(C){$\bullet$} \drawvertex(D){$\bullet$}
\drawvertex(E){$\bullet$}\drawvertex(F){$\bullet$}
\drawvertex(G){$\bullet$}\drawvertex(H){$\bullet$}
\drawvertex(I){$\bullet$}
\drawundirectededge(A,B){$b$}\drawundirectededge(B,C){$a$}\drawundirectededge(C,D){$c$}
\drawundirectededge(D,E){$a$}\drawundirectededge(E,C){$b$}\drawundirectededge(E,F){$c$}\drawundirectededge(F,G){$b$}
\drawundirectededge(B,I){$c$}\drawundirectededge(H,F){$a$}\drawundirectededge(H,I){$b$}
\drawundirectededge(I,A){$a$}\drawundirectededge(G,H){$c$}

\drawundirectedloop[l](A){$c$}\drawundirectedloop(D){$b$}\drawundirectedloop[r](G){$a$}

\end{picture}
\end{center}

For each $n\geq 1$, the graph $\Sigma_{n+1}$ can be also obtained
in the following recursive way: we take the union of three copies
of $\Sigma_{n}$ and, for each one of the outmost vertices of
$\Sigma_{n+1}$, the corresponding copy is reflected with respect
to the bisectrix of the corresponding angle.

\begin{os}\rm
Observe that, for each $n\geq 1$, the graph $\Sigma_n$ has three
loops, centered at the vertices $0^n,1^n$ and $2^n$, labelled by
$c,b$ and $a$, respectively. This is an easy consequence of the
definition of the generators $a,b$ and $c$ of $H^{(3)}$. Moreover,
these are the only loops in $\Sigma_n$. However, in what follows,
we will be studying spanning trees on the Schreier graphs
$\Sigma_n$ without loops, since a spanning tree of $\Sigma_n$
cannot contain any loop. By abuse of notation, we still denote by
$\Sigma_n$ the graph without loops.
\end{os}


\subsection{Computation of the complexity}\label{spannhanoi}

For each $n\geq 1$, let $T_n(a,b,c)$, $U_n(a,b,c)$, $R_n(a,b,c)$,
$L_n(a,b,c)$ and $Q_n(a,b,c)$ be the generating functions having
the same meaning as in Section \ref{sectionsierpinski}. In what
follows, we will often omit the argument $(a,b,c)$ in the
generating functions.

\begin{teo}\label{Teoremabrutti}
For each $n\geq 1$, the weighted generating functions
$T_n(a,b,c)$, $U_n(a,b,c)$, $R_n(a,b,c)$, $L_n(a,b,c)$ and
$Q_n(a,b,c)$ satisfy the following equations:
\begin{eqnarray}\label{Tbrutto}
T_{n+1} = T_{n}^3(ab+ac+bc)+2abcT_{n}^2(U_{n}+R_{n}+L_{n})
\end{eqnarray}
\begin{eqnarray}\label{Ubrutto}
U_{n+1} &=&
bT_{n}^3+T_{n}^2\left((ab+ac+bc)U_{n}+2b(aR_{n}+cL_{n})\right)\\
&+&
abcT_{n}\left(3R_{n}L_{n}+U_{n}(L_{n}+R_{n}+2U_{n})\right)+abcT_{n}^2Q_{n}\nonumber
\end{eqnarray}
\begin{eqnarray}\label{Rbrutto}
R_{n+1} &=&
aT_{n}^3+T_{n}^2((ab+ac+bc)R_{n}+2a(bU_{n}+cL_{n}))\\
&+&
abcT_{n}(3U_{n}L_{n}+R_{n}(L_{n}+U_{n}+2R_{n}))+abcT_{n}^2Q_{n}\nonumber
\end{eqnarray}
\begin{eqnarray}\label{Lbrutto}
L_{n+1} &=&
cT_{n}^3+T_{n}^2((ab+ac+bc)L_{n}+2c(aR_{n}+bU_{n}))\\
&+&
abcT_{n}(3R_{n}U_{n}+L_{n}(U_{n}+R_{n}+2L_{n}))+abcT_{n}^2Q_{n}\nonumber
\end{eqnarray}
\begin{eqnarray}\label{Qbrutto}
Q_{n+1}&=& 4abcT_{n}Q_{n}(U_{n}+R_{n}+L_{n})\\
&+& T_{n}^2 ((2b+a+c)U_{n}+(2a+b+c)R_{n}+(2c+a+b)L_{n})\nonumber \\
&+& T_{n}^2Q_{n}(ab+ac+bc) + T_{n}^3\nonumber\\
&+&
2abc\left(U^2_{n}(R_{n}+L_{n})+R_{n}^2(U_{n}+L_{n})+L_{n}^2(U_{n}+R_{n})+U_{n}R_{n}L_{n}\right)\nonumber\\
&+& 2T_{n}\left(U_{n}R_{n}(ac+bc+2ab) +
U_{n}L_{n}(ab+ac+2bc)+R_{n}L_{n}(ab+bc+2ac)\right.\nonumber\\
&+&\left.bU_{n}^2(a+c)+aR_{n}^2(b+c)+cL_{n}^2(a+b)\right)\nonumber,
\end{eqnarray}
with initial conditions
$$
T_1(a,b,c)=ab+ac+bc\qquad U_1(a,b,c) = b\qquad R_1(a,b,c) =
a\qquad L_1(a,b,c) = c\qquad Q_1(a,b,c) = 1.
$$
\end{teo}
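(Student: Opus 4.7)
The plan is to mirror the combinatorial strategy of Theorems \ref{equazionimodellofacile} and \ref{equazionisoddisfatte}, exploiting the self-similar decomposition of $\Sigma_{n+1}$ as the union of three copies of $\Sigma_n$ glued by three exceptional edges labelled $a$, $b$, $c$ (one between each pair of copies, by Rules III, IV, V). The main novelty compared with the Sierpi\'nski case is that the three copies of $\Sigma_n$ no longer share outmost vertices: they are joined by three distinct edges, so in every configuration we must specify which subset of the exceptional edges is used, and each such edge contributes its own weight to the generating function.

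First I would verify the initial conditions at $n=1$ directly: the graph $\Sigma_1$ (with loops removed) is a triangle with edges labelled $a$, $b$, $c$, so its spanning trees are obtained by deleting one edge and $T_1(a,b,c)=ab+ac+bc$; the spanning $2$-forests separating a specified outmost vertex consist of a single edge, giving $U_1=b$, $R_1=a$, $L_1=c$; finally $Q_1=1$ because the only spanning $3$-forest is the empty subgraph.

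For the recursive step, I would fix a spanning subgraph of $\Sigma_{n+1}$ of the prescribed type, restrict it to each of the three copies of $\Sigma_n$, and enumerate which of the three exceptional edges are present. For $T_{n+1}$, a simple edge-count shows that the restriction to each copy is either a spanning tree or a spanning $2$-forest (higher-order forests would require more than three exceptional edges, which are unavailable). In the first case, all three copies are trees and exactly two of the three exceptional edges must be used for the result to be both connected and acyclic; summing over the three choices of the unused edge gives the factor $ab+ac+bc$, yielding the term $T_n^3(ab+ac+bc)$. In the second case, exactly one copy is a $2$-forest and all three exceptional edges are used; the forest must separate the two outmost vertices of that copy which meet exceptional edges (the true outmost vertex of $\Sigma_{n+1}$ may lie in either component). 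Each of the three choices of the $2$-forest copy admits two admissible orientations and, after taking into account the reflection that places the copy at its corner of $\Sigma_{n+1}$, the three contributions sum to $2(U_n+R_n+L_n)$, producing the term $2abc\,T_n^2(U_n+R_n+L_n)$ of \eqref{Tbrutto}.

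The remaining equations \eqref{Ubrutto}--\eqref{Qbrutto} follow the same enumeration principle: for each prescribed partition of the three outmost vertices of $\Sigma_{n+1}$ into connected components, one lists the possible distributions of tree/$2$-forest/$3$-forest pieces among the three copies of $\Sigma_n$ together with the compatible subsets of exceptional edges, and sums up the resulting weights. The main obstacle is purely bookkeeping: one has to track which outmost vertex of each copy is the outmost vertex of $\Sigma_{n+1}$, which exceptional edge carries which of the labels $a$, $b$, $c$, and which of $U_n$, $R_n$, $L_n$ describes the appropriate $2$-forest when the corresponding copy is reflected about the bisectrix of its corner --- these reflections permute the roles of the three outmost vertices and are the reason why the coefficients in \eqref{Ubrutto}--\eqref{Qbrutto} are asymmetric in $a$, $b$, $c$ whereas the Sierpi\'nski analogues were not. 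Once these conventions are fixed, each monomial in \eqref{Ubrutto}--\eqref{Qbrutto} can be read off from a case-by-case picture strictly analogous to those appearing in the proof of Theorem \ref{equazionimodellofacile}, and the theorem follows.
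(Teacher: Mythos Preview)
Your proposal is correct and follows essentially the same approach as the paper's own proof: the paper also decomposes $\Sigma_{n+1}$ into three reflected copies of $\Sigma_n$ joined by the three exceptional edges labelled $a$, $b$, $c$, and then enumerates case by case (with pictures analogous to those of Theorem~\ref{equazionimodellofacile}) the admissible combinations of trees, $2$-forests, and $3$-forests in the copies together with the subset of exceptional edges used. Your observation that the two ``connecting'' outmost vertices of the $2$-forest copy must lie in different components (so that no cycle is created once all three exceptional edges are present), with the true outmost vertex free to lie in either component, is exactly the mechanism producing the factor $2(U_n+R_n+L_n)$ in \eqref{Tbrutto}, and the paper handles the remaining equations by the same kind of picture-by-picture bookkeeping you describe.
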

\begin{proof}
The graph $\Sigma_{n+1}$ can be represented as
\begin{center}
\begin{picture}(400,112)
\letvertex A=(140,10)\letvertex B=(160,44)
\letvertex C=(180,78)\letvertex D=(200,112)
\letvertex E=(220,78)\letvertex F=(240,44)
\letvertex G=(260,10)\letvertex H=(220,10)\letvertex I=(180,10)

\put(86,75){$\Sigma_{n+1}$}

\put(157,18){1} \put(237,18){2} \put(198,89){3}

\put(163,58){$a$} \put(233,58){$c$} \put(198,2){$b$}

\drawvertex(A){$\bullet$}\drawvertex(B){$\bullet$}
\drawvertex(C){$\bullet$}\drawvertex(D){$\bullet$}
\drawvertex(E){$\bullet$}\drawvertex(F){$\bullet$}
\drawvertex(G){$\bullet$}\drawvertex(H){$\bullet$}
\drawvertex(I){$\bullet$}

\drawundirectededge(D,E){}
\drawundirectededge(B,C){}\drawundirectededge(H,F){}\drawundirectededge(I,A){}
\drawundirectededge(E,C){}\drawundirectededge(A,B){}
\drawundirectededge(H,I){}\drawundirectededge(F,G){}
\drawundirectededge(C,D){}
\drawundirectededge(E,F){}\drawundirectededge(B,I){}\drawundirectededge(G,H){}
\end{picture}
\end{center}
where the triangles 1, 2 and 3 represent subgraphs of
$\Sigma_{n+1}$ isomorphic to the graph $\Sigma_{n}$. The edges
labelled by $a,b$ and $c$ in the picture above are the exceptional
edges joining the different copies of $\Sigma_{n}$. In the
pictures of this proof we will use the same conventions as in
Theorem \ref{equazionimodellofacile}.\\
\indent Now, a spanning tree of $\Sigma_{n+1}$ can be obtained by
choosing a spanning tree for each one of the triangles 1, 2 and 3,
and omitting one of the edges $a,b$ and $c$ in order to have no
cycle. This gives the contribution $T_{n}^3(ab+ac+bc)$ to
$T_{n+1}$. A spanning tree of $\Sigma_{n+1}$ can also be obtained
by choosing a spanning tree for two of the triangles 1, 2 and 3,
and a 2-forest in the third triangle. This time, we do not need to
omit any one of edges $a,b$ and $c$ (see the picture
below).\unitlength=0,2mm
\begin{center}
\begin{picture}(400,125)
\letvertex A=(140,10)\letvertex B=(160,44)
\letvertex C=(180,78)\letvertex D=(200,112)
\letvertex E=(220,78)\letvertex F=(240,44)
\letvertex G=(260,10)\letvertex H=(220,10)\letvertex I=(180,10)
\letvertex z=(180,95)\letvertex y=(190,70)
\drawundirectededge(z,y){}

\drawvertex(A){$\bullet$}\drawvertex(B){$\bullet$}
\drawvertex(C){$\bullet$}\drawvertex(D){$\bullet$}
\drawvertex(E){$\bullet$}\drawvertex(F){$\bullet$}
\drawvertex(G){$\bullet$}\drawvertex(H){$\bullet$}
\drawvertex(I){$\bullet$}

\drawundirectededge(E,C){}\drawundirectededge(B,I){}\drawundirectededge(H,F){}
\drawundirectededge(A,B){}\drawundirectededge(B,C){}\drawundirectededge(C,D){}
\drawundirectededge(D,E){}\drawundirectededge(E,F){}
\drawundirectededge(F,G){}
\drawundirectededge(H,I){}\drawundirectededge(I,A){}\drawundirectededge(G,H){}
\end{picture}
\end{center}
This situation corresponds to the contribution
$2abcT_{n}^2(U_{n}+R_{n}+L_{n})$ to $T_{n+1}$ and Equation
(\ref{Tbrutto}) is proven.

We want to prove Equation (\ref{Ubrutto}). The two following
pictures show that a spanning $2$-forest of type $U_{n+1}$ can be
obtained starting from three spanning trees of level $n$ (in this
case we omit two exceptional edges), but also by taking two
spanning trees in two copies of $\Sigma_n$ and a spanning
$2$-forest of type $U_{n}$ in the third one (by omitting one of
the three exceptional edges).


\begin{center}
\begin{picture}(400,110)
\letvertex A=(40,10)\letvertex B=(60,44)
\letvertex C=(80,78)\letvertex D=(100,112)
\letvertex E=(120,78)\letvertex F=(140,44)
\letvertex G=(160,10)\letvertex H=(120,10)\letvertex I=(80,10)
\letvertex z=(60,75)\letvertex y=(140,75)\letvertex
u=(85,60)\letvertex v=(115,60)

 \drawundirectededge(z,u){}\drawundirectededge(y,v){}

\drawvertex(A){$\bullet$}\drawvertex(B){$\bullet$}
\drawvertex(C){$\bullet$}\drawvertex(D){$\bullet$}
\drawvertex(E){$\bullet$}\drawvertex(F){$\bullet$}
\drawvertex(G){$\bullet$}\drawvertex(H){$\bullet$}
\drawvertex(I){$\bullet$}

\drawundirectededge(E,C){}\drawundirectededge(B,I){}\drawundirectededge(H,F){}
\drawundirectededge(A,B){}\drawundirectededge(B,C){}\drawundirectededge(C,D){}
\drawundirectededge(D,E){}\drawundirectededge(E,F){}
\drawundirectededge(F,G){}
\drawundirectededge(H,I){}\drawundirectededge(I,A){}\drawundirectededge(G,H){}

\letvertex a=(240,10)\letvertex b=(260,44)
\letvertex c=(280,78)\letvertex d=(300,112)
\letvertex e=(320,78)\letvertex f=(340,44)
\letvertex g=(360,10)\letvertex h=(320,10)\letvertex i=(280,10)

\letvertex x=(280,100)\letvertex w=(320,100)

\drawundirectededge(x,w){}
\letvertex j=(300,20)\letvertex J=(300,0)
\drawundirectededge(j,J){}

\drawvertex(a){$\bullet$}\drawvertex(b){$\bullet$}
\drawvertex(c){$\bullet$}\drawvertex(d){$\bullet$}
\drawvertex(e){$\bullet$}\drawvertex(f){$\bullet$}
\drawvertex(g){$\bullet$}\drawvertex(h){$\bullet$}
\drawvertex(i){$\bullet$}

\drawundirectededge(b,c){}\drawundirectededge(e,f){}
\drawundirectededge(h,i){}

 \drawundirectededge(a,b){}\drawundirectededge(c,d){}
\drawundirectededge(d,e){}\drawundirectededge(e,c){}
\drawundirectededge(f,g){}\drawundirectededge(b,i){}\drawundirectededge(h,f){}
\drawundirectededge(i,a){}\drawundirectededge(g,h){}
\end{picture}
\end{center}

More precisely, the first picture corresponds to a contribution
equal to $bT_{n}^3$, the second one to the contribution
$T_{n}^2U_{n}(ab+ac+bc)$ to $U_{n+1}$. Consider now the following
pictures.

\begin{center}
\begin{picture}(400,110)
\letvertex A=(40,10)\letvertex B=(60,44)
\letvertex C=(80,78)\letvertex D=(100,112)
\letvertex E=(120,78)\letvertex F=(140,44)
\letvertex G=(160,10)\letvertex H=(120,10)\letvertex I=(80,10)

\letvertex z=(80,95)\letvertex Z=(90,65)
\letvertex y=(115,61)\letvertex Y=(145,61)
\drawundirectededge(z,Z){}\drawundirectededge(y,Y){}

\drawvertex(A){$\bullet$}\drawvertex(B){$\bullet$}
\drawvertex(C){$\bullet$}\drawvertex(D){$\bullet$}
\drawvertex(E){$\bullet$}\drawvertex(F){$\bullet$}
\drawvertex(G){$\bullet$}\drawvertex(H){$\bullet$}
\drawvertex(I){$\bullet$}

\drawundirectededge(E,C){}\drawundirectededge(B,I){}\drawundirectededge(H,F){}

\drawundirectededge(A,B){}\drawundirectededge(B,C){}\drawundirectededge(C,D){}
\drawundirectededge(D,E){}\drawundirectededge(E,F){}
\drawundirectededge(F,G){}
\drawundirectededge(H,I){}\drawundirectededge(I,A){}\drawundirectededge(G,H){}

\letvertex a=(240,10)\letvertex b=(260,44)
\letvertex c=(280,78)\letvertex d=(300,112)
\letvertex e=(320,78)\letvertex f=(340,44)
\letvertex g=(360,10)\letvertex h=(320,10)\letvertex i=(280,10)
\letvertex u=(240,27)\letvertex U=(280,27)
\letvertex v=(320,61)\letvertex V=(340,65)
\drawundirectededge(u,U){}\drawundirectededge(v,V){}

\drawvertex(a){$\bullet$}\drawvertex(b){$\bullet$}
\drawvertex(c){$\bullet$}\drawvertex(d){$\bullet$}
\drawvertex(e){$\bullet$}\drawvertex(f){$\bullet$}
\drawvertex(g){$\bullet$}\drawvertex(h){$\bullet$}
\drawvertex(i){$\bullet$}

\drawundirectededge(b,c){}\drawundirectededge(e,f){}
\drawundirectededge(h,i){}

\drawundirectededge(a,b){}\drawundirectededge(c,d){}
\drawundirectededge(d,e){}\drawundirectededge(e,c){}
\drawundirectededge(f,g){}\drawundirectededge(b,i){}\drawundirectededge(h,f){}
\drawundirectededge(i,a){}\drawundirectededge(g,h){}
\end{picture}
\end{center}
These configurations, together with their symmetric ones obtained
by reflecting with respect to the vertical axis, give a
contribution to $U_{n+1}$ equal to $2bT_{n}^2(aR_{n}+cL_{n})$.
Consider now the two following pictures.


\begin{center}
\begin{picture}(400,110)
\letvertex A=(40,10)\letvertex B=(60,44)
\letvertex C=(80,78)\letvertex D=(100,112)
\letvertex E=(120,78)\letvertex F=(140,44)
\letvertex G=(160,10)\letvertex H=(120,10)\letvertex I=(80,10)

\letvertex z=(80,95)\letvertex y=(90,70)
\letvertex Z=(115,27)\letvertex Y=(165,27)
\drawundirectededge(z,y){}\drawundirectededge(Z,Y){}

\drawvertex(A){$\bullet$}\drawvertex(B){$\bullet$}
\drawvertex(C){$\bullet$}\drawvertex(D){$\bullet$}
\drawvertex(E){$\bullet$}\drawvertex(F){$\bullet$}
\drawvertex(G){$\bullet$}\drawvertex(H){$\bullet$}
\drawvertex(I){$\bullet$}

\drawundirectededge(E,C){}\drawundirectededge(B,I){}\drawundirectededge(H,F){}

\drawundirectededge(A,B){}\drawundirectededge(B,C){}\drawundirectededge(C,D){}
\drawundirectededge(D,E){}\drawundirectededge(E,F){}
\drawundirectededge(F,G){}
\drawundirectededge(H,I){}\drawundirectededge(I,A){}\drawundirectededge(G,H){}

\letvertex a=(240,10)\letvertex b=(260,44)
\letvertex c=(280,78)\letvertex d=(300,112)
\letvertex e=(320,78)\letvertex f=(340,44)
\letvertex g=(360,10)\letvertex h=(320,10)\letvertex i=(280,10)

\letvertex x=(275,95)\letvertex w=(325,95)
\drawundirectededge(x,w){}
\letvertex X=(240,27)\letvertex W=(280,27)
\drawundirectededge(X,W){}

\drawvertex(a){$\bullet$}\drawvertex(b){$\bullet$}
\drawvertex(c){$\bullet$}\drawvertex(d){$\bullet$}
\drawvertex(e){$\bullet$}\drawvertex(f){$\bullet$}
\drawvertex(g){$\bullet$}\drawvertex(h){$\bullet$}
\drawvertex(i){$\bullet$}

\drawundirectededge(b,c){}\drawundirectededge(e,f){}
\drawundirectededge(h,i){}

 \drawundirectededge(a,b){}\drawundirectededge(c,d){}
\drawundirectededge(d,e){}\drawundirectededge(e,c){}
\drawundirectededge(f,g){}\drawundirectededge(b,i){}\drawundirectededge(h,f){}
\drawundirectededge(i,a){}\drawundirectededge(g,h){}
\end{picture}
\end{center}
The left picture, together with its symmetric, gives the
contribution $2abcT_{n}R_{n}L_{n}$; the right one, together with
its symmetric, gives the contribution
$abcT_{n}U_{n}(R_{n}+L_{n})$. Consider now the two following
situations.


\begin{center}
\begin{picture}(400,110)
\letvertex A=(40,10)\letvertex B=(60,44)
\letvertex C=(80,78)\letvertex D=(100,112)
\letvertex E=(120,78)\letvertex F=(140,44)
\letvertex G=(160,10)\letvertex H=(120,10)\letvertex I=(80,10)

\letvertex z=(80,95)\letvertex y=(120,95)
\drawundirectededge(z,y){}
\letvertex Z=(80,40)\letvertex Y=(65,0)
\drawundirectededge(Z,Y){}

\drawvertex(A){$\bullet$}\drawvertex(B){$\bullet$}
\drawvertex(C){$\bullet$}\drawvertex(D){$\bullet$}
\drawvertex(E){$\bullet$}\drawvertex(F){$\bullet$}
\drawvertex(G){$\bullet$}\drawvertex(H){$\bullet$}
\drawvertex(I){$\bullet$}

\drawundirectededge(E,C){}\drawundirectededge(B,I){}\drawundirectededge(H,F){}

\drawundirectededge(A,B){}\drawundirectededge(B,C){}\drawundirectededge(C,D){}
\drawundirectededge(D,E){}\drawundirectededge(E,F){}
\drawundirectededge(F,G){}
\drawundirectededge(H,I){}\drawundirectededge(I,A){}\drawundirectededge(G,H){}

\letvertex a=(240,10)\letvertex b=(260,44)
\letvertex c=(280,78)\letvertex d=(300,112)
\letvertex e=(320,78)\letvertex f=(340,44)
\letvertex g=(360,10)\letvertex h=(320,10)\letvertex i=(280,10)

\letvertex x=(240,27)\letvertex w=(360,27)\letvertex X=(280,27)\letvertex W=(320,27)

\drawundirectededge(x,X){}\drawundirectededge(w,W){}
\drawvertex(a){$\bullet$}\drawvertex(b){$\bullet$}
\drawvertex(c){$\bullet$}\drawvertex(d){$\bullet$}
\drawvertex(e){$\bullet$}\drawvertex(f){$\bullet$}
\drawvertex(g){$\bullet$}\drawvertex(h){$\bullet$}
\drawvertex(i){$\bullet$}

\drawundirectededge(b,c){}\drawundirectededge(e,f){}
\drawundirectededge(h,i){}

\drawundirectededge(a,b){}\drawundirectededge(c,d){}
\drawundirectededge(d,e){}\drawundirectededge(e,c){}
\drawundirectededge(f,g){}\drawundirectededge(b,i){}\drawundirectededge(h,f){}
\drawundirectededge(i,a){}\drawundirectededge(g,h){}
\end{picture}
\end{center}
The picture on the left, together with its symmetric, gives a
contribution equal to $2abcT_{n}U_{n}^2$ to $U_{n+1}$. The picture
on the right contributes by the summand $abcT_{n}R_{n}L_{n}$.
Finally, the contribution $abcT_{n}^2Q_{n}$ is described by the
following picture.


\begin{center}
\begin{picture}(400,110)
\letvertex A=(140,10)\letvertex B=(160,44)
\letvertex C=(180,78)\letvertex D=(200,112)
\letvertex E=(220,78)\letvertex F=(240,44)
\letvertex G=(260,10)\letvertex H=(220,10)\letvertex I=(180,10)

\letvertex z=(200,95)\letvertex y=(180,100)
\drawundirectededge(z,y){}\letvertex Z=(220,100)\letvertex
Y=(200,70) \drawundirectededge(z,Z){}\drawundirectededge(z,Y){}

\drawvertex(A){$\bullet$}\drawvertex(B){$\bullet$}
\drawvertex(C){$\bullet$}\drawvertex(D){$\bullet$}
\drawvertex(E){$\bullet$}\drawvertex(F){$\bullet$}
\drawvertex(G){$\bullet$}\drawvertex(H){$\bullet$}
\drawvertex(I){$\bullet$}

\drawundirectededge(E,C){}\drawundirectededge(B,I){}\drawundirectededge(H,F){}

\drawundirectededge(A,B){}\drawundirectededge(B,C){}\drawundirectededge(C,D){}
\drawundirectededge(D,E){}\drawundirectededge(E,F){}
\drawundirectededge(F,G){}
\drawundirectededge(H,I){}\drawundirectededge(I,A){}\drawundirectededge(G,H){}

\end{picture}
\end{center}
So we have proven Equation (\ref{Ubrutto}) about $U_{n+1}$.
Equations \eqref{Rbrutto} and \eqref{Lbrutto} can be proven in a
similar way.

We want to prove now Equation (\ref{Qbrutto}) about $Q_{n+1}$.
Consider the following pictures.
\begin{center}
\begin{picture}(400,110)
\letvertex A=(40,10)\letvertex B=(60,44)
\letvertex C=(80,78)\letvertex D=(100,112)
\letvertex E=(120,78)\letvertex F=(140,44)
\letvertex G=(160,10)\letvertex H=(120,10)\letvertex I=(80,10)

\letvertex z=(100,95)\letvertex y=(80,100)\letvertex Z=(120,100)
\letvertex Y=(100,70)\letvertex K=(40,27)\letvertex k=(60,0)

\drawundirectededge(z,y){}
\drawundirectededge(z,Z){}\drawundirectededge(z,Y){}
\drawundirectededge(k,K){}

\drawvertex(A){$\bullet$}\drawvertex(B){$\bullet$}
\drawvertex(C){$\bullet$}\drawvertex(D){$\bullet$}
\drawvertex(E){$\bullet$}\drawvertex(F){$\bullet$}
\drawvertex(G){$\bullet$}\drawvertex(H){$\bullet$}
\drawvertex(I){$\bullet$}

\drawundirectededge(E,C){}\drawundirectededge(B,I){}\drawundirectededge(H,F){}

\drawundirectededge(A,B){}\drawundirectededge(B,C){}\drawundirectededge(C,D){}
\drawundirectededge(D,E){}\drawundirectededge(E,F){}
\drawundirectededge(F,G){}
\drawundirectededge(H,I){}\drawundirectededge(I,A){}\drawundirectededge(G,H){}

\letvertex a=(240,10)\letvertex b=(260,44)
\letvertex c=(280,78)\letvertex d=(300,112)
\letvertex e=(320,78)\letvertex f=(340,44)
\letvertex g=(360,10)\letvertex h=(320,10)\letvertex i=(280,10)
\letvertex z=(300,95)\letvertex y=(280,100)\letvertex Z=(320,100)
\letvertex Y=(300,70)\letvertex K=(280,27)\letvertex k=(260,0)

\drawundirectededge(z,y){}
\drawundirectededge(z,Z){}\drawundirectededge(z,Y){}
\drawundirectededge(k,K){}

\drawvertex(a){$\bullet$}\drawvertex(b){$\bullet$}
\drawvertex(c){$\bullet$}\drawvertex(d){$\bullet$}
\drawvertex(e){$\bullet$}\drawvertex(f){$\bullet$}
\drawvertex(g){$\bullet$}\drawvertex(h){$\bullet$}
\drawvertex(i){$\bullet$}

\drawundirectededge(b,c){}\drawundirectededge(e,f){}
\drawundirectededge(h,i){}

\drawundirectededge(a,b){}\drawundirectededge(c,d){}
\drawundirectededge(d,e){}\drawundirectededge(e,c){}
\drawundirectededge(f,g){}\drawundirectededge(b,i){}\drawundirectededge(h,f){}
\drawundirectededge(i,a){}\drawundirectededge(g,h){}
\end{picture}
\end{center}

They provide, by symmetry, a contribution equal to
$4abcT_{n}Q_{n}(U_{n}+R_{n}+L_{n})$. Next, consider the following
cases.
\begin{center}
\begin{picture}(400,110)
\letvertex A=(40,10)\letvertex B=(60,44)
\letvertex C=(80,78)\letvertex D=(100,112)
\letvertex E=(120,78)\letvertex F=(140,44)
\letvertex G=(160,10)\letvertex H=(120,10)\letvertex I=(80,10)

\letvertex z=(60,65)\letvertex y=(80,60)\letvertex Z=(100,70)
\letvertex Y=(120,90)\letvertex K=(100,20)\letvertex k=(100,0)

\drawundirectededge(z,y){} \drawundirectededge(Y,Z){}
\drawundirectededge(k,K){}

\drawvertex(A){$\bullet$}\drawvertex(B){$\bullet$}
\drawvertex(C){$\bullet$}\drawvertex(D){$\bullet$}
\drawvertex(E){$\bullet$}\drawvertex(F){$\bullet$}
\drawvertex(G){$\bullet$}\drawvertex(H){$\bullet$}
\drawvertex(I){$\bullet$}

\drawundirectededge(E,C){}\drawundirectededge(B,I){}\drawundirectededge(H,F){}

\drawundirectededge(A,B){}\drawundirectededge(B,C){}\drawundirectededge(C,D){}
\drawundirectededge(D,E){}\drawundirectededge(E,F){}
\drawundirectededge(F,G){}
\drawundirectededge(H,I){}\drawundirectededge(I,A){}\drawundirectededge(G,H){}

\letvertex a=(240,10)\letvertex b=(260,44)
\letvertex c=(280,78)\letvertex d=(300,112)
\letvertex e=(320,78)\letvertex f=(340,44)
\letvertex g=(360,10)\letvertex h=(320,10)\letvertex i=(280,10)

\letvertex u=(260,65)\letvertex v=(280,60)\letvertex U=(280,95)
\letvertex V=(320,95)\letvertex n=(300,20)\letvertex N=(300,0)

\drawundirectededge(u,v){} \drawundirectededge(U,V){}
\drawundirectededge(n,N){}

\drawvertex(a){$\bullet$}\drawvertex(b){$\bullet$}
\drawvertex(c){$\bullet$}\drawvertex(d){$\bullet$}
\drawvertex(e){$\bullet$}\drawvertex(f){$\bullet$}
\drawvertex(g){$\bullet$}\drawvertex(h){$\bullet$}
\drawvertex(i){$\bullet$}

\drawundirectededge(b,c){}\drawundirectededge(e,f){}
\drawundirectededge(h,i){}

 \drawundirectededge(a,b){}\drawundirectededge(c,d){}
\drawundirectededge(d,e){}\drawundirectededge(e,c){}
\drawundirectededge(f,g){}\drawundirectededge(b,i){}\drawundirectededge(h,f){}
\drawundirectededge(i,a){}\drawundirectededge(g,h){}
\end{picture}
\end{center}
The picture on the left gives, by symmetry, a contribution equal
to $2T_{n}^2 (bU_{n}+aR_{n}+cL_{n})$. The picture on the right
gives $T_{n}^2 ((a+c)U_{n}+(b+c)R_{n}+(a+b)L_{n})$. Now consider
the following cases.
\begin{center}
\begin{picture}(400,110)
\letvertex A=(40,10)\letvertex B=(60,44)
\letvertex C=(80,78)\letvertex D=(100,112)
\letvertex E=(120,78)\letvertex F=(140,44)
\letvertex G=(160,10)\letvertex H=(120,10)\letvertex I=(80,10)

\letvertex z=(60,65)\letvertex y=(80,60)\letvertex Z=(140,65)
\letvertex Y=(120,60)\letvertex K=(100,20)\letvertex k=(100,0)

\drawundirectededge(z,y){} \drawundirectededge(Y,Z){}
\drawundirectededge(k,K){}

\drawvertex(A){$\bullet$}\drawvertex(B){$\bullet$}
\drawvertex(C){$\bullet$}\drawvertex(D){$\bullet$}
\drawvertex(E){$\bullet$}\drawvertex(F){$\bullet$}
\drawvertex(G){$\bullet$}\drawvertex(H){$\bullet$}
\drawvertex(I){$\bullet$}

\drawundirectededge(E,C){}\drawundirectededge(B,I){}\drawundirectededge(H,F){}

\drawundirectededge(A,B){}\drawundirectededge(B,C){}\drawundirectededge(C,D){}
\drawundirectededge(D,E){}\drawundirectededge(E,F){}
\drawundirectededge(F,G){}
\drawundirectededge(H,I){}\drawundirectededge(I,A){}\drawundirectededge(G,H){}

\letvertex a=(240,10)\letvertex b=(260,44)
\letvertex c=(280,78)\letvertex d=(300,112)
\letvertex e=(320,78)\letvertex f=(340,44)
\letvertex g=(360,10)\letvertex h=(320,10)\letvertex i=(280,10)

\letvertex u=(300,95)\letvertex v=(280,100)\letvertex U=(320,100)
\letvertex V=(300,70)\letvertex n=(300,20)\letvertex N=(300,0)

\drawundirectededge(u,v){}
\drawundirectededge(U,u){}\drawundirectededge(V,u){}
\drawundirectededge(n,N){}

\drawvertex(a){$\bullet$}\drawvertex(b){$\bullet$}
\drawvertex(c){$\bullet$}\drawvertex(d){$\bullet$}
\drawvertex(e){$\bullet$}\drawvertex(f){$\bullet$}
\drawvertex(g){$\bullet$}\drawvertex(h){$\bullet$}
\drawvertex(i){$\bullet$}

\drawundirectededge(b,c){}\drawundirectededge(e,f){}
\drawundirectededge(h,i){}

 \drawundirectededge(a,b){}\drawundirectededge(c,d){}
\drawundirectededge(d,e){}\drawundirectededge(e,c){}
\drawundirectededge(f,g){}\drawundirectededge(b,i){}\drawundirectededge(h,f){}
\drawundirectededge(i,a){}\drawundirectededge(g,h){}
\end{picture}
\end{center}

The picture on the left gives the contribution $T_{n}^3$ to
$Q_{n+1}$, the picture on the right contributes by
$T_{n}^2Q_{n}(ab+ac+bc)$. Consider the following pictures.
\begin{center}
\begin{picture}(400,110)
\letvertex A=(40,10)\letvertex B=(60,44)
\letvertex C=(80,78)\letvertex D=(100,112)
\letvertex E=(120,78)\letvertex F=(140,44)
\letvertex G=(160,10)\letvertex H=(120,10)\letvertex I=(80,10)

\letvertex z=(80,95)\letvertex y=(120,95)\drawundirectededge(z,y){}

\letvertex Z=(40,27)\letvertex Y=(80,27) \drawundirectededge(Y,Z){}

\letvertex u=(160,35)\letvertex v=(140,0) \drawundirectededge(u,v){}

\drawvertex(A){$\bullet$}\drawvertex(B){$\bullet$}
\drawvertex(C){$\bullet$}\drawvertex(D){$\bullet$}
\drawvertex(E){$\bullet$}\drawvertex(F){$\bullet$}
\drawvertex(G){$\bullet$}\drawvertex(H){$\bullet$}
\drawvertex(I){$\bullet$}

\drawundirectededge(E,C){}\drawundirectededge(B,I){}\drawundirectededge(H,F){}

\drawundirectededge(A,B){}\drawundirectededge(B,C){}\drawundirectededge(C,D){}
\drawundirectededge(D,E){}\drawundirectededge(E,F){}
\drawundirectededge(F,G){}
\drawundirectededge(H,I){}\drawundirectededge(I,A){}\drawundirectededge(G,H){}


\letvertex a=(240,10)\letvertex b=(260,44)
\letvertex c=(280,78)\letvertex d=(300,112)
\letvertex e=(320,78)\letvertex f=(340,44)
\letvertex g=(360,10)\letvertex h=(320,10)\letvertex i=(280,10)

\letvertex zz=(280,95)\letvertex yy=(320,95)\drawundirectededge(zz,yy){}

\letvertex ZZ=(240,27)\letvertex YY=(280,27) \drawundirectededge(YY,ZZ){}

\letvertex uu=(320,35)\letvertex vv=(340,0) \drawundirectededge(uu,vv){}

\drawvertex(a){$\bullet$}\drawvertex(b){$\bullet$}
\drawvertex(c){$\bullet$}\drawvertex(d){$\bullet$}
\drawvertex(e){$\bullet$}\drawvertex(f){$\bullet$}
\drawvertex(g){$\bullet$}\drawvertex(h){$\bullet$}
\drawvertex(i){$\bullet$}

\drawundirectededge(e,c){}\drawundirectededge(b,i){}\drawundirectededge(h,f){}

\drawundirectededge(a,b){}\drawundirectededge(b,c){}\drawundirectededge(c,d){}
\drawundirectededge(d,e){}\drawundirectededge(e,f){}
\drawundirectededge(f,g){}
\drawundirectededge(h,i){}\drawundirectededge(i,a){}\drawundirectededge(g,h){}
\end{picture}
\end{center}

By symmetry, each one of the pictures above gives to $Q_{n+1}$ a
contribution of
$$
abc\left(U^2_{n}(R_{n}+L_{n})+R_{n}^2(U_{n}+L_{n})+L_{n}^2(U_{n}+R_{n})\right).
$$
Consider now the following pictures.
\begin{center}
\begin{picture}(400,110)
\letvertex A=(40,10)\letvertex B=(60,44)
\letvertex C=(80,78)\letvertex D=(100,112)
\letvertex E=(120,78)\letvertex F=(140,44)
\letvertex G=(160,10)\letvertex H=(120,10)\letvertex I=(80,10)

\letvertex z=(80,100)\letvertex y=(90,65)\letvertex Z=(80,35)
\letvertex Y=(60,0)\letvertex K=(120,35)\letvertex k=(160,35)

\drawundirectededge(z,y){} \drawundirectededge(Y,Z){}
\drawundirectededge(k,K){}

\drawvertex(A){$\bullet$}\drawvertex(B){$\bullet$}
\drawvertex(C){$\bullet$}\drawvertex(D){$\bullet$}
\drawvertex(E){$\bullet$}\drawvertex(F){$\bullet$}
\drawvertex(G){$\bullet$}\drawvertex(H){$\bullet$}
\drawvertex(I){$\bullet$}

\drawundirectededge(E,C){}\drawundirectededge(B,I){}\drawundirectededge(H,F){}

\drawundirectededge(A,B){}\drawundirectededge(B,C){}\drawundirectededge(C,D){}
\drawundirectededge(D,E){}\drawundirectededge(E,F){}
\drawundirectededge(F,G){}
\drawundirectededge(H,I){}\drawundirectededge(I,A){}\drawundirectededge(G,H){}

\letvertex a=(240,10)\letvertex b=(260,44)
\letvertex c=(280,78)\letvertex d=(300,112)
\letvertex e=(320,78)\letvertex f=(340,44)
\letvertex g=(360,10)\letvertex h=(320,10)\letvertex i=(280,10)

\letvertex u=(320,95)\letvertex U=(300,70)\letvertex v=(240,35)
\letvertex V=(280,35)\letvertex p=(320,35)\letvertex P=(340,0)

\drawundirectededge(u,U){}
\drawundirectededge(v,V){}\drawundirectededge(p,P){}

\drawvertex(a){$\bullet$}\drawvertex(b){$\bullet$}
\drawvertex(c){$\bullet$}\drawvertex(d){$\bullet$}
\drawvertex(e){$\bullet$}\drawvertex(f){$\bullet$}
\drawvertex(g){$\bullet$}\drawvertex(h){$\bullet$}
\drawvertex(i){$\bullet$}

\drawundirectededge(b,c){}\drawundirectededge(e,f){}
\drawundirectededge(h,i){}

 \drawundirectededge(a,b){}\drawundirectededge(c,d){}
\drawundirectededge(d,e){}\drawundirectededge(e,c){}
\drawundirectededge(f,g){}\drawundirectededge(b,i){}\drawundirectededge(h,f){}
\drawundirectededge(i,a){}\drawundirectededge(g,h){}
\end{picture}
\end{center}

They give a contribution equal to $2abcU_{n}R_{n}L_{n}$ to
$Q_{n+1}$. Now look at the following configurations.

\begin{center}
\begin{picture}(400,110)
\letvertex A=(40,10)\letvertex B=(60,44)
\letvertex C=(80,78)\letvertex D=(100,112)
\letvertex E=(120,78)\letvertex F=(140,44)
\letvertex G=(160,10)\letvertex H=(120,10)\letvertex I=(80,10)

\letvertex z=(40,35)\letvertex y=(80,35)\letvertex Z=(120,35)
\letvertex Y=(160,35)\letvertex K=(100,20)\letvertex k=(100,0)

\drawundirectededge(z,y){} \drawundirectededge(Y,Z){}
\drawundirectededge(k,K){}

\drawvertex(A){$\bullet$}\drawvertex(B){$\bullet$}
\drawvertex(C){$\bullet$}\drawvertex(D){$\bullet$}
\drawvertex(E){$\bullet$}\drawvertex(F){$\bullet$}
\drawvertex(G){$\bullet$}\drawvertex(H){$\bullet$}
\drawvertex(I){$\bullet$}

\drawundirectededge(E,C){}\drawundirectededge(B,I){}\drawundirectededge(H,F){}

\drawundirectededge(A,B){}\drawundirectededge(B,C){}\drawundirectededge(C,D){}
\drawundirectededge(D,E){}\drawundirectededge(E,F){}
\drawundirectededge(F,G){}
\drawundirectededge(H,I){}\drawundirectededge(I,A){}\drawundirectededge(G,H){}

\letvertex a=(240,10)\letvertex b=(260,44)
\letvertex c=(280,78)\letvertex d=(300,112)
\letvertex e=(320,78)\letvertex f=(340,44)
\letvertex g=(360,10)\letvertex h=(320,10)\letvertex i=(280,10)

\letvertex u=(240,35)\letvertex U=(260,0)\letvertex v=(340,0)
\letvertex V=(360,35)\letvertex p=(300,20)\letvertex P=(300,0)

\drawundirectededge(u,U){}
\drawundirectededge(v,V){}\drawundirectededge(p,P){}

\drawvertex(a){$\bullet$}\drawvertex(b){$\bullet$}
\drawvertex(c){$\bullet$}\drawvertex(d){$\bullet$}
\drawvertex(e){$\bullet$}\drawvertex(f){$\bullet$}
\drawvertex(g){$\bullet$}\drawvertex(h){$\bullet$}
\drawvertex(i){$\bullet$}

\drawundirectededge(b,c){}\drawundirectededge(e,f){}
\drawundirectededge(h,i){}

 \drawundirectededge(a,b){}\drawundirectededge(c,d){}
\drawundirectededge(d,e){}\drawundirectededge(e,c){}
\drawundirectededge(f,g){}\drawundirectededge(b,i){}\drawundirectededge(h,f){}
\drawundirectededge(i,a){}\drawundirectededge(g,h){}
\end{picture}
\end{center}

By symmetry, they correspond to the contribution
$2T_{n}(abU_{n}R_{n}+ bcU_{n}L_{n}+acR_{n}L_{n})$. The following
picture gives the contribution
$T_{n}\left(bU_{n}^2(a+c)+aR_{n}^2(b+c)+cL_{n}^2(a+b)\right)$.

\begin{center}
\begin{picture}(400,110)
\letvertex A=(140,10)\letvertex B=(160,44)
\letvertex C=(180,78)\letvertex D=(200,112)
\letvertex E=(220,78)\letvertex F=(240,44)
\letvertex G=(260,10)\letvertex H=(220,10)\letvertex I=(180,10)

\letvertex z=(140,35)\letvertex y=(160,0)
\drawundirectededge(z,y){}
\letvertex Z=(220,35)\letvertex
Y=(260,35) \drawundirectededge(Y,Z){}
\letvertex u=(200,20)\letvertex
v=(200,0) \drawundirectededge(u,v){}

\drawvertex(A){$\bullet$}\drawvertex(B){$\bullet$}
\drawvertex(C){$\bullet$}\drawvertex(D){$\bullet$}
\drawvertex(E){$\bullet$}\drawvertex(F){$\bullet$}
\drawvertex(G){$\bullet$}\drawvertex(H){$\bullet$}
\drawvertex(I){$\bullet$}

\drawundirectededge(E,C){}\drawundirectededge(B,I){}\drawundirectededge(H,F){}
\drawundirectededge(A,B){}\drawundirectededge(B,C){}\drawundirectededge(C,D){}
\drawundirectededge(D,E){}\drawundirectededge(E,F){}
\drawundirectededge(F,G){}
\drawundirectededge(H,I){}\drawundirectededge(I,A){}\drawundirectededge(G,H){}
\end{picture}
\end{center}
Finally, we have to consider the following four situations.
\begin{center}
\begin{picture}(400,110)
\letvertex A=(40,10)\letvertex B=(60,44)
\letvertex C=(80,78)\letvertex D=(100,112)
\letvertex E=(120,78)\letvertex F=(140,44)
\letvertex G=(160,10)\letvertex H=(120,10)\letvertex I=(80,10)

\letvertex z=(80,100)\letvertex y=(120,100)\letvertex Z=(40,25)
\letvertex Y=(60,0)\letvertex K=(100,20)\letvertex k=(100,0)

\drawundirectededge(z,y){} \drawundirectededge(Y,Z){}
\drawundirectededge(k,K){}

\drawvertex(A){$\bullet$}\drawvertex(B){$\bullet$}
\drawvertex(C){$\bullet$}\drawvertex(D){$\bullet$}
\drawvertex(E){$\bullet$}\drawvertex(F){$\bullet$}
\drawvertex(G){$\bullet$}\drawvertex(H){$\bullet$}
\drawvertex(I){$\bullet$}

\drawundirectededge(E,C){}\drawundirectededge(B,I){}\drawundirectededge(H,F){}
\drawundirectededge(A,B){}\drawundirectededge(B,C){}\drawundirectededge(C,D){}
\drawundirectededge(D,E){}\drawundirectededge(E,F){}
\drawundirectededge(F,G){}
\drawundirectededge(H,I){}\drawundirectededge(I,A){}\drawundirectededge(G,H){}

\letvertex a=(240,10)\letvertex b=(260,44)
\letvertex c=(280,78)\letvertex d=(300,112)
\letvertex e=(320,78)\letvertex f=(340,44)
\letvertex g=(360,10)\letvertex h=(320,10)\letvertex i=(280,10)

\letvertex u=(320,100)\letvertex U=(300,70)\letvertex v=(240,25)
\letvertex V=(260,0)\letvertex p=(300,20)\letvertex P=(300,0)

\drawundirectededge(u,U){}
\drawundirectededge(v,V){}\drawundirectededge(p,P){}

\drawvertex(a){$\bullet$}\drawvertex(b){$\bullet$}
\drawvertex(c){$\bullet$}\drawvertex(d){$\bullet$}
\drawvertex(e){$\bullet$}\drawvertex(f){$\bullet$}
\drawvertex(g){$\bullet$}\drawvertex(h){$\bullet$}
\drawvertex(i){$\bullet$}

\drawundirectededge(b,c){}\drawundirectededge(e,f){}
\drawundirectededge(h,i){}

\drawundirectededge(a,b){}\drawundirectededge(c,d){}
\drawundirectededge(d,e){}\drawundirectededge(e,c){}
\drawundirectededge(f,g){}\drawundirectededge(b,i){}\drawundirectededge(h,f){}
\drawundirectededge(i,a){}\drawundirectededge(g,h){}
\end{picture}
\end{center}

\begin{center}
\begin{picture}(400,110)
\letvertex A=(40,10)\letvertex B=(60,44)
\letvertex C=(80,78)\letvertex D=(100,112)
\letvertex E=(120,78)\letvertex F=(140,44)
\letvertex G=(160,10)\letvertex H=(120,10)\letvertex I=(80,10)

\letvertex z=(80,100)\letvertex y=(120,100)\letvertex Z=(40,35)
\letvertex Y=(80,35)\letvertex K=(100,20)\letvertex k=(100,0)

\drawundirectededge(z,y){} \drawundirectededge(Y,Z){}
\drawundirectededge(k,K){}

\drawvertex(A){$\bullet$}\drawvertex(B){$\bullet$}
\drawvertex(C){$\bullet$}\drawvertex(D){$\bullet$}
\drawvertex(E){$\bullet$}\drawvertex(F){$\bullet$}
\drawvertex(G){$\bullet$}\drawvertex(H){$\bullet$}
\drawvertex(I){$\bullet$}

\drawundirectededge(E,C){}\drawundirectededge(B,I){}\drawundirectededge(H,F){}
\drawundirectededge(A,B){}\drawundirectededge(B,C){}\drawundirectededge(C,D){}
\drawundirectededge(D,E){}\drawundirectededge(E,F){}
\drawundirectededge(F,G){}
\drawundirectededge(H,I){}\drawundirectededge(I,A){}\drawundirectededge(G,H){}

\letvertex a=(240,10)\letvertex b=(260,44)
\letvertex c=(280,78)\letvertex d=(300,112)
\letvertex e=(320,78)\letvertex f=(340,44)
\letvertex g=(360,10)\letvertex h=(320,10)\letvertex i=(280,10)

\letvertex u=(320,95)\letvertex U=(300,70)\letvertex v=(240,35)
\letvertex V=(280,35)\letvertex p=(300,20)\letvertex P=(300,0)

\drawundirectededge(u,U){}
\drawundirectededge(v,V){}\drawundirectededge(p,P){}

\drawvertex(a){$\bullet$}\drawvertex(b){$\bullet$}
\drawvertex(c){$\bullet$}\drawvertex(d){$\bullet$}
\drawvertex(e){$\bullet$}\drawvertex(f){$\bullet$}
\drawvertex(g){$\bullet$}\drawvertex(h){$\bullet$}
\drawvertex(i){$\bullet$}

\drawundirectededge(b,c){}\drawundirectededge(e,f){}
\drawundirectededge(h,i){}\drawundirectededge(a,b){}\drawundirectededge(c,d){}
\drawundirectededge(d,e){}\drawundirectededge(e,c){}
\drawundirectededge(f,g){}\drawundirectededge(b,i){}\drawundirectededge(h,f){}
\drawundirectededge(i,a){}\drawundirectededge(g,h){}
\end{picture}
\end{center}
The first and the third pictures provide each a contribution equal
to $T_{n}(U_{n}R_{n}(a+b)c + U_{n}L_{n}(b+c)a+R_{n}L_{n}(a+c)b)$.
The second one gives
$T_{n}\left(bU_{n}^2(a+c)+aR_{n}^2(b+c)+cL_{n}^2(a+b)\right) $.
The fourth one gives $2T_{n}(abU_{n}R_{n} +acR_{n}L_{n}+bc
U_{n}L_{n})$. This completes the proof.
\end{proof}

Equations \eqref{Tbrutto}, \eqref{Ubrutto}, \eqref{Rbrutto},
\eqref{Lbrutto} and \eqref{Qbrutto}, with the initial conditions
given in Theorem \ref{Teoremabrutti}, seem to be very hard to be
explicitly solved. For this reason, in the next section we use a
different strategy to find the weighted generating function. On
the other hand, if we are interested in the complexity of these
graphs, we can evaluate each generating function in $a=b=c=1$ and
we get simpler equations. Let us define:
\begin{itemize}
\item $\tau_n:=\tau(\Sigma_n)=T_n(1,1,1)=$ complexity of
$\Sigma_n$;
\item $s_n:=U_n(1,1,1) = R_n(1,1,1)=L_n(1,1,1)=$ number of spanning
$2$-forests, where two fixed outmost vertices are in the same
connected component and the third one lies in a different
component;
\item $q_n:=Q_n(1,1,1) =$ number of spanning $3$-forests, where each component contains exactly one outmost
vertex.
\end{itemize}

\begin{cor}\label{relations}
For each $n\geq 1$, the values $\tau_n, s_n$ and $q_n$ satisfy the
following relations:
\begin{eqnarray*}
\tau_{n+1} = 3\tau_{n}^3+6\tau_{n}^2s_{n}
\end{eqnarray*}
\begin{eqnarray*}
s_{n+1} = \tau_{n}^3 + 7\tau_{n}^2s_{n}
+7\tau_{n}s_{n}^2+\tau_{n}^2q_{n}
\end{eqnarray*}
\begin{eqnarray*}
q_{n+1} &=& 3\tau_{n}^2q_{n} + 12\tau_{n}s_{n}q_{n} +14s_{n}^3 \\
&+& 12\tau_{n}^2s_{n}+ \tau_{n}^3+36\tau_{n}s_{n}^2\nonumber,
\end{eqnarray*}
with initial conditions
$$
\tau_1=3 \qquad s_1=q_1=1.
$$
\end{cor}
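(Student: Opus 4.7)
The plan is to derive Corollary \ref{relations} as a direct specialization of Theorem \ref{Teoremabrutti} at $a=b=c=1$. The key observation is that although the labelling of $\Sigma_n$ is not invariant under rotation by $2\pi/3$, the underlying unweighted graph is invariant under the symmetry group of the triangle, so the three functions $U_n, R_n, L_n$ coincide when evaluated at $a=b=c=1$. Consequently, setting
\[
\tau_n := T_n(1,1,1), \qquad s_n := U_n(1,1,1) = R_n(1,1,1) = L_n(1,1,1), \qquad q_n := Q_n(1,1,1),
\]
each of the five polynomial identities of Theorem \ref{Teoremabrutti} collapses to a single numerical relation among $\tau_n, s_n$ and $q_n$. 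The initial values $\tau_1 = 3$, $s_1 = 1$, $q_1 = 1$ follow immediately from the initial conditions in that theorem.

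First, I would evaluate \eqref{Tbrutto} at $a=b=c=1$: the factors $ab+ac+bc$ and $abc$ become $3$ and $1$, respectively, while $U_n + R_n + L_n$ becomes $3s_n$, yielding $\tau_{n+1} = 3\tau_n^3 + 6\tau_n^2 s_n$. Next, equations \eqref{Ubrutto}, \eqref{Rbrutto} and \eqref{Lbrutto} all collapse to the same relation (this is a consistency check, confirming that $U_n(1,1,1) = R_n(1,1,1) = L_n(1,1,1)$ is preserved by the recursion): the linear combinations $(ab+ac+bc)U_n + 2b(aR_n+cL_n)$, $3R_nL_n + U_n(L_n+R_n+2U_n)$, etc. specialize to $7s_n$ and $7s_n^2$, producing $s_{n+1} = \tau_n^3 + 7\tau_n^2 s_n + 7\tau_n s_n^2 + \tau_n^2 q_n$.

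The last relation, for $q_{n+1}$, will be the only bookkeeping-heavy step. I would group the terms of \eqref{Qbrutto} by the type of product of $\tau_n, s_n, q_n$ that they produce at $a=b=c=1$. The first line gives $12\tau_n s_n q_n$; the coefficient $(2b+a+c) + (2a+b+c) + (2c+a+b) = 12$ contributes $12\tau_n^2 s_n$; the third line contributes $3\tau_n^2 q_n + \tau_n^3$; the symmetric sum $U_n^2(R_n+L_n) + R_n^2(U_n+L_n) + L_n^2(U_n+R_n) + U_nR_nL_n$ evaluates to $7s_n^3$, producing the term $14s_n^3$; finally the long expression in the last two lines has the pairings $U_nR_n, U_nL_n, R_nL_n$ each multiplied by a coefficient summing to $4$ at $a=b=c=1$, and the three squared terms each with coefficient $2$, for a total of $2\tau_n(3 \cdot 4 + 3 \cdot 2)s_n^2 = 36\tau_n s_n^2$. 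Summing gives exactly the claimed identity.

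The only potential obstacle is arithmetic care in the specialization of \eqref{Qbrutto}, which contains roughly a dozen inhomogeneous monomials in $a,b,c$; nothing conceptually new is required, since the combinatorial work has already been carried out in Theorem \ref{Teoremabrutti}. With these substitutions in place, the corollary follows.
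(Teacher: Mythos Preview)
Your proposal is correct and follows exactly the paper's approach: specialize the recursions of Theorem \ref{Teoremabrutti} at $a=b=c=1$, using that $U_n(1,1,1)=R_n(1,1,1)=L_n(1,1,1)$, and read off the initial values. The detailed bookkeeping you outline for \eqref{Qbrutto} is accurate and is precisely what the paper leaves implicit.
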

\begin{proof}
The proof follows from Theorem \ref{Teoremabrutti}, by evaluating
each function for $a=b=c=1$ and recalling that
$U_n(1,1,1)=R_n(1,1,1)=L_n(1,1,1)$, for each $n\geq 1$.
\end{proof}

\begin{prop}
For every $n\geq 1$, the values $\tau_n, s_n$ and $q_n$ satisfying
the relations given in Corollary \ref{relations} are:
\begin{enumerate}
\item $\tau_n = 3^{\frac{3^n+2n-1}{4}}\cdot
5^{\frac{3^n-2n-1}{4}}$;
\item $s_n = 3^{\frac{3^n-2n-1}{4}}\cdot 5^{\frac{3^n-2n-1}{4}}\cdot
\frac{5^n-3^n}{2}$;
\item $q_n = 3^{\frac{3^n-6n+3}{4}}\cdot 5^{\frac{3^n-2n-1}{4}}\cdot \left(\frac{5^n-3^n}{2}\right)^2$
\end{enumerate}
In particular, the asymptotic growth constant of the spanning
trees of $\Sigma_n$ is $\frac{1}{4}\left(\log 3 + \log 5\right)$.
\end{prop}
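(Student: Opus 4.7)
The plan is to prove the three formulas by induction on $n$, then read off the asymptotic growth constant. For the base case $n=1$, direct substitution yields $\tau_1 = 3^{1}5^{0} = 3$, $s_1 = 3^{0}5^{0}\cdot(5-3)/2 = 1$, and $q_1 = 3^{0}5^{0}\cdot((5-3)/2)^{2} = 1$, matching the initial conditions in Corollary \ref{relations}.

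For the induction step, the key algebraic observation is that, writing $\gamma_n = 3^{(3^n-2n-1)/4}\,5^{(3^n-2n-1)/4}$, the claimed formulas read $\tau_n = \gamma_n\cdot 3^n$, $s_n = \gamma_n\cdot(5^n-3^n)/2$, and $q_n = \gamma_n\cdot(5^n-3^n)^2/(4\cdot 3^{n-1})$. Consequently the auxiliary quantity $\tau_n+2s_n$ collapses to a remarkably symmetric expression:
\[
\tau_n + 2s_n \;=\; \gamma_n\!\left(3^n + (5^n-3^n)\right) \;=\; \gamma_n\cdot 5^n \;=\; 3^{(3^n-2n-1)/4}\,5^{(3^n+2n-1)/4},
\]
i.e.\ it is obtained from $\tau_n$ simply by interchanging the exponents of $3$ and $5$. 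Plugging this into the first recurrence $\tau_{n+1} = 3\tau_n^{2}(\tau_n+2s_n)$ and adding exponents gives, for the power of $3$, $1+2\cdot(3^n+2n-1)/4+(3^n-2n-1)/4 = (3^{n+1}+2n+1)/4$, and for the power of $5$, $2\cdot(3^n-2n-1)/4+(3^n+2n-1)/4 = (3^{n+1}-2n-3)/4$, which matches the claimed formula for $\tau_{n+1}$.

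For the second and third recurrences, I would substitute the factored forms $\tau_n = \gamma_n 3^n$, $s_n = \gamma_n(5^n-3^n)/2$, $q_n = \gamma_n(5^n-3^n)^2/(4\cdot 3^{n-1})$ into the right-hand sides of Corollary \ref{relations}. Each monomial $\tau_n^{i}s_n^{j}q_n^{k}$ factors as a power of $\gamma_n$ times a monomial in $3^n$ and $5^n$. In the $s_{n+1}$ recurrence the common factor is $\gamma_n^{3}/3^{n-?}$, after which the bracketed polynomial in $3^n,5^n$ collapses to $3^{n+1}\cdot 5^{n+1}\cdot(5^{n+1}-3^{n+1})/2\cdot (\text{constant})$; the exponents of $3$ and $5$ then agree with $\gamma_{n+1}$, and the remaining factor $(5^{n+1}-3^{n+1})/2$ delivers the desired formula for $s_{n+1}$. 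The same substitution into the $q_{n+1}$ recurrence produces a polynomial identity in $3^n$ and $5^n$ whose left side is $(5^{n+1}-3^{n+1})^2$ after the analogous $\gamma_{n+1}$ factor is extracted.

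Finally, since $|V(\Sigma_n)| = 3^n$, the asymptotic growth constant is
\[
\lim_{n\to\infty}\frac{\log\tau_n}{3^n}
= \log 3\cdot\lim_{n\to\infty}\frac{3^n+2n-1}{4\cdot 3^n}
+ \log 5\cdot\lim_{n\to\infty}\frac{3^n-2n-1}{4\cdot 3^n}
= \frac{1}{4}(\log 3 + \log 5).
\]
The main obstacle is the bookkeeping in the $q_{n+1}$ step: it mixes six monomials of three different homogeneous degrees in $(\tau_n,s_n,q_n)$, so one must carefully track that each contributes the correct power of $\gamma_n$ before the remaining polynomial identity in $3^n,5^n$ can be checked. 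The symmetric identity $\tau_n+2s_n = \gamma_n\cdot 5^n$ is the structural simplification that makes this reduction tractable.
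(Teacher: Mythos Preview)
Your approach is correct and is exactly the paper's approach --- the paper simply writes ``the proof can be given by induction on $n$'' and computes the limit $\lim_{n\to\infty}\log(\tau_n)/3^n$, so you have in fact supplied considerably more detail than the original. Your factorisation via $\gamma_n$ and the identity $\tau_n+2s_n=\gamma_n\cdot 5^n$ is the right structural observation, and the $\tau_{n+1}$ step is fully verified; carrying out the same substitution for $s_{n+1}$ and $q_{n+1}$ one finds (with $x=3^n$, $y=5^n$, $u=y-x$) that the recurrences reduce to the polynomial factorisations $2x^2+7xu+5u^2=(2x+5u)(x+u)$ and $4x^3+24x^2u+45xu^2+25u^3=(2x+5u)^2(x+u)$, after which everything matches $\gamma_{n+1}=\gamma_n^3xy$.

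Two small corrections: replace the literal placeholder ``$3^{n-?}$'' with the actual exponent before submitting, and note that all six terms in the $q_{n+1}$ recurrence are homogeneous of degree~$3$ in $(\tau_n,s_n,q_n)$, not of three different degrees --- this is precisely why the common factor $\gamma_n^3$ pulls out cleanly.
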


\begin{proof}
The proof can be given by induction on $n$. Then, the asymptotic
growth constant is obtained as the limit
$$
\lim_{n\to \infty}\frac{\log(\tau_n)}{|V(\Sigma_n)|},
$$
where $|V(\Sigma_n)| = 3^n$ is the number of vertices of
$\Sigma_n$, for each $n\geq 1$.
\end{proof}

\subsection{Computation of the weighted generating function}

In this section, we compute the weighted generating function of
the spanning trees on the graph $\Sigma_n$ by using the weighted
version of the Kirchhoff's Theorem. The idea is to use the
self-similar presentation of the generators of the group in order
to describe recursively the adjacency matrix of the graph. The
Laplace matrix of $\Sigma_n$ is then obtained as the difference
between the degree matrix of $\Sigma_n$ and its adjacency matrix.
By using the Schur complement Formula, one can compute a cofactor
of the Laplace matrix passing from a square matrix of size $3^n$
to a square matrix of size $3^{n-1}$, which turns out to have the
same structure of the original matrix, where the entries have been
transformed via a rational function. This gives rise to a
recursion process allowing to compute the cofactor. The same
strategy is used in \cite{noidimeri} to compute the partition
function of the dimer model. See also \cite{hanoi}, where the
authors use the same idea to study the spectrum of the group
$H^{(3)}$.

Let $\Delta_n$ be the adjacency matrix of $\Sigma_n$, whose rows
and columns are indexed by the vertices of $\Sigma_n$, i.e., words
of length $n$ in the alphabet $\{0,1,2\}$, which are ordered
lexicografically. Hence, $\Delta_n=a_n+b_n+c_n$, where the
matrices $a_n,b_n$ and $c_n$ describe the action of the generators
$a,b$ and $c$ of $H^{(3)}$, respectively, on the $n$-th level of
the rooted ternary tree. In other words, we have:
$$
(a_n)_{ij}=
\begin{cases}
a & \text{if} \ a(v_i)=v_j\\
 0  & \text{otherwise},
\end{cases}
$$
where $v_i$, for $i=1,\ldots, 3^n$, denotes the $i$-th vertex of
$\Sigma_n$ with respect to the lexicografic order (similarly for
$b_n$ and $c_n$). Notice that, in this way, we are writing the
adjacency matrix of the graph with loops. However, in the Laplace
matrix, the entries corresponding to loops will be deleted by
using the degree matrix $(a+b+c)I_n$, according with the fact that
a spanning tree cannot contain any loop.

The matrices $a_n$, $b_n$ and $c_n$ can be recursively
represented, by using the self-similar description of the
generators $a,b$ and $c$ of the group. For $n=1$, we set
$$
a_1 = \begin{pmatrix}
  0 & a & 0 \\
  a & 0 & 0 \\
  0 & 0 & a
\end{pmatrix} \qquad b_1 =\begin{pmatrix}
  0 & 0 & b \\
  0 & b & 0 \\
  b & 0 & 0
\end{pmatrix} \qquad  c_1 =  \begin{pmatrix}
  c & 0 & 0 \\
  0 & 0 & c \\
  0 & c & 0
\end{pmatrix}
$$
and, for every $n>1$, we put
$$
a_n = \begin{pmatrix}
  0 & aI_{n-1} & 0 \\
  aI_{n-1} & 0 & 0 \\
  0 & 0 & a_{n-1}
\end{pmatrix} \qquad b_n =\begin{pmatrix}
  0 & 0 & bI_{n-1} \\
  0 & b_{n-1} & 0 \\
  bI_{n-1} & 0 & 0
\end{pmatrix} \qquad c_n =  \begin{pmatrix}
  c_{n-1} & 0 & 0 \\
  0 & 0 & cI_{n-1} \\
  0 & cI_{n-1} & 0
\end{pmatrix}.
$$
Therefore, Kirchhoff's Theorem states that the weighted generating
function of the spanning trees on $\Sigma_n$ can be obtained by
computing any cofactor of the Laplace matrix
$$
(a+b+c)I_{n}-\Delta_n = \begin{pmatrix}
  (a+b+c)I_{n-1} -c_{n-1} & -aI_{n-1} & -bI_{n-1} \\
  -aI_{n-1} &(a+b+c)I_{n-1}- b_{n-1} & -cI_{n-1} \\
  -bI_{n-1} & -cI_{n-1} &(a+b+c)I_{n-1}- a_{n-1}
\end{pmatrix}.
$$
We can choose, for instance, to compute the cofactor associated
with the first row and the first column of the Laplace matrix. In
order to compute it we put, for every $n>1$,
$$
\overline{\Delta}_n= \begin{pmatrix}
  c_{n-1} & aI_{n-1}^0 & bI_{n-1}^0 \\
  aI_{n-1}^0 & b_{n-1} & cI_{n-1} \\
  bI_{n-1}^0 & cI_{n-1} & a_{n-1}
\end{pmatrix},
$$
with
$$
I_n^0=I_n-\begin{pmatrix}
  1 & 0 & \cdots & 0 \\
  0 & 0 & \cdots & 0 \\
  \vdots & 0 & \ddots & 0 \\
  0 & 0 & 0 & 0
\end{pmatrix}.
$$
Moreover, we fix the notation $\gamma \widetilde{I}_{k}:= \gamma
I_k^0+(a+b+c)(I_k-I_k^0)$, for each $k\geq 1$.

Define
$$
\Lambda_n:=\!(a+b+c)I_n-\overline{\Delta}_n \!=\!\begin{pmatrix}
  (a+b+c)I_{n-1}-c_{n-1} & -aI_{n-1}^0 & -bI_{n-1}^0 \\
  -aI_{n-1}^0 & (a+b+c)I_{n-1}-b_{n-1} & -cI_{n-1} \\
  -bI_{n-1}^0 & -cI_{n-1} & (a+b+c)I_{n-1}-a_{n-1}
\end{pmatrix}\!.
$$
The introduction of the matrices $I_n^0$ guarantees that
$\frac{\det(\Lambda_n)}{a+b}$ is the generating function of the
spanning trees of $\Sigma_n$, because we have performed all the
necessary cancellations in $\overline{\Delta}_n$. More precisely,
since $(\Lambda_n)_{11}=a+b$ and this is the only non-zero entry
of the first row and column of $\Lambda_n$, it turns out that
$\frac{\det(\Lambda_n)}{a+b}$ is equal to the cofactor of
$(a+b+c)I_n-\Delta_n$ associated with the first row and the first column.\\
\indent Moreover, we define the rational function
$P:\mathbb{R}^9\longrightarrow \mathbb{R}^9$ as
$P(\underline{x})=(P_1(\underline{x}), \ldots,
P_9(\underline{x}))$, where
$$
P_1(\underline{x})=x_1, \qquad \ P_2(\underline{x})=x_2, \qquad
P_3(\underline{x})=x_3,
$$
\noindent \begin{eqnarray*}
P_4(\underline{x})&=& 1/D \\
&\cdot &\left(
x_2x_3x_4^2x_5x_6x_9+x_1x_4x_5^2x_7x_8^2-x_1x_4^3x_5^2x_8-x_1x_2^2x_4x_5^2x_7+x_2x_3x_4^3x_9^2+x_1x_4^2x_5x_6x_7x_8\right.\\&-&x_1x_4^4x_5x_6
-x_1^2x_2x_3x_4^3+x_1x_5x_6x_7^2x_8^2-x_1x_3^2x_5x_6x_8^2-x_1x_4^2x_5x_6x_7x_8-x_1x_2^2x_5x_6x_7^2\\&+&\left.x_1x_2^2x_3^2x_5x_6+x_2x_3x_4x_5^2x_6^2
-x_1x_3^2x_4x_6^2x_8+x_1x_4x_6^2x_7^2x_8-x_1x_4^3x_6^2x_7+x_2x_3x_4^2x_5x_6x_9\right)
\end{eqnarray*}
\begin{eqnarray*}
P_5(\underline{x})&=& 1/D \\
&\cdot &\left(
x_1x_3x_5^3x_8^2-x_1x_2^2x_3x_5^3-x_2x_4x_5^4x_6+x_2x_4x_5^2x_6x_7x_9+x_2x_4^2x_5x_7x_9^2-x_2x_4^2x_5^3x_9\right.\\&+&x_1x_3x_4x_5^2x_6x_8-
x_1^2x_2x_4^2x_5x_7+x_2x_5x_6^2x_7^2x_9-x_2x_3^2x_5x_6^2x_9+x_1x_3x_4x_5^2x_6x_8-x_2x_5^3x_6^2x_7\\&+&\left.x_2x_4x_6x_7^2x_9^2-
x_2x_3^2x_4x_6x_9^2-x_2x_4x_5^2x_6x_7x_9-x_1^2x_2x_4x_6x_7^2+x_1x_3x_4^2x_5x_6^2+x_1^2x_2x_3^2x_4x_6\right)
\end{eqnarray*}
\begin{eqnarray*}
P_6(\underline{x})&=& 1/D \\
&\cdot &\left(
 x_3x_4x_5x_8^2x_9^2-x_2^2x_3x_4x_5x_9^2-x_1^2x_3x_4x_5x_8^2+x_1^2x_2^2x_3x_4x_5+x_1x_2x_4^2x_5^2x_6+2x_1x_2x_4x_5x_6^2x_7\right.\\
 &+&x_3x_5^2x_6x_8^2x_9-x_3x_5^2x_6^3x_8-x_2^2x_3x_5^2x_6x_9-x_1^2x_3x_4^2x_6x_8+x_3x_4^2x_6x_8x_9^2-x_3x_4^2x_6^3x_9\\
 &-&\left.x_3x_4x_5x_6^4+x_1x_2x_6^3x_7^2-x_1x_2x_3^2x_6^3
\right)
\end{eqnarray*}
\begin{eqnarray*}
P_7(\underline{x})&=& x_7+ 1/D\\
&\cdot &\left(-x_5x_7^2x_8^2x_9+x_3^2x_5x_7^2x_9+x_2^2x_5x_7^2x_9-x_2^2x_3^2x_5x_9+x_5^3x_7x_8^2-x_4^2x_5^3x_8-x_2^2x_5^3x_7-x_4x_7^2x_8x_9^2\right.\\
&+&x_4^2x_5x_7x_8x_9\!+\!x_3^2x_4x_8x_9^2\!+\!x_4^3x_7x_9^2\!-\!x_4^3x_5^2x_9\!-\!x_1^2x_3^2x_4x_8\!+\!x_1^2x_4x_7^2x_8\!-\!x_1^2x_4^3x_7
\!+\!2x_3^2x_4x_5x_6x_8x_9\\
&-&\left.2x_4x_5x_6x_7^2x_8x_9+2x_4^3x_5x_6x_7x_9+2x_4x_5^3x_6x_7x_8-2x_4^3x_5^3x_6
-2x_1x_2x_3x_4^2x_5^2+x_4x_5^2x_7x_8x_9\right)
\end{eqnarray*}
\begin{eqnarray*}
P_8(\underline{x})&=& x_8+ 1/D\\
&\cdot &\left(
-x_4^2x_6^4x_7-x_3^2x_6^4x_8-2x_4^3x_5x_6^3-x_4^4x_6^2x_9-x_1^2x_4^4x_8+x_4^4x_8x_9^2+x_2^2x_4^2x_7x_9^2+x_3^2x_6^2x_8^2x_9\right.\\
&-&x_2^2x_3^2x_6^2x_9+2x_4^2x_6^2x_7x_8x_9\!-\!x_1^2x_2^2x_4^2x_7\!+\!x_1^2x_4^2x_7x_8^2\!-\!x_4^2x_7x_8^2x_9^2\!-\!2x_1x_2x_3x_4^2x_6^2
\!+\!2x_4^3x_5x_6x_8x_9\\
&-&\left.2x_4x_5x_6x_7x_8^2x_9+2x_4x_5x_6^3x_7x_8+2x_2^2x_4x_5x_6x_7x_9-x_6^2x_7^2x_8^2x_9+x_2^2x_6^2x_7^2x_9+x_6^4x_7^2x_8\right)
\end{eqnarray*}
\begin{eqnarray*}
P_9(\underline{x})&=&x_9+ 1/D\\
&\cdot &\left(
x_5^4x_8^2x_9-x_2^2x_5^4x_9-x_5^4x_6^2x_8+x_1^2x_5^2x_7x_8^2-x_1^2x_2^2x_5^2x_7-x_5^2x_7x_8^2x_9^2+x_2^2x_5^2x_7x_9^2+2x_5^2x_6^2x_7x_8x_9\right.\\
&-&2x_1x_2x_3x_5^2x_6^2 + 2x_4x_5^3x_6x_8x_9 - 2x_4x_5^3x_6^3 +
2x_1^2x_4x_5x_6x_7x_8-2x_4x_5x_6x_7x_8x_9^2 + 2x_4x_5x_6^3x_7x_9\\
&-&\left. x_3^2x_6^4x_9- x_5^2x_6^4x_7 +x_1^2x_6^2x_7^2x_8-
x_1^2x_3^2x_6^2x_8 -x_6^2x_7^2x_8x_9^2+ x_3^2x_6^2x_8x_9^2+
x_6^4x_7^2x_9\right),
\end{eqnarray*}
with
\begin{eqnarray*}
D&=& D(x_1,\ldots, x_9)\\
&=&x_7^2x_8^2x_9^2-x_3^2x_8^2x_9^2-x_4^2x_7x_8x_9^2-x_2^2x_7^2x_9^2
+x_2^2x_3^2x_9^2-x_5^2x_7x_8^2x_9+x_3^2x_6^2x_8x_9\\&-&x_6^2x_7^2x_8x_9+
x_4^2x_5^2x_8x_9+x_4^2x_6^2x_7x_9+x_2^2x_5^2x_7x_9-x_4^2x_7^2x_8^2
+x_3^2x_4^2x_8^2\\&+&x_5^2x_6^2x_7x_8+x_4^4x_7x_8-2x_1x_2x_3x_4x_5x_6-x_1^2x_2^2x_3^2+x_1^2x_2^2x_7^2-x_4^2x_5^2x_6^2.
\end{eqnarray*}
Denote $P^{(k)}(\underline{x}):=P^{(k)}(x_1,\ldots, x_9)$ the
$k$-th iteration of the function $P$ and use the notation
$$
P^{(k)}(a,b,c,a,b,c,a+b+c,a+b+c,a+b+c):=\left(a,b,c,a^{(k)},
b^{(k)}, c^{(k)},e^{(k)},f^{(k)},g^{(k)}\right),
$$
and
$$
D^{(k)}:=D^{(k)}(a,b,c,a,b,c,a+b+c,a+b+c,a+b+c)= D
\left(a,b,c,a^{(k)},b^{(k)},c^{(k)},e^{(k)},f^{(k)},g^{(k)}\right),
$$
where $a^{(0)}=a,b^{(0)}=b,c^{(0)}=c$ and
$e^{(0)}=f^{(0)}=g^{(0)}=a+b+c$.

Moreover we define, for each $\underline{x}\in \mathbb{R}^9$,
$$
\Lambda_{k}(\underline{x})= \begin{pmatrix}
 P_7(\underline{x})\widetilde{I}_{k-1}- c_{k-1} & -P_4(\underline{x})I_{k-1}^0 & -P_5(\underline{x})I_{k-1}^0 \\
 -P_4(\underline{x})I_{k-1}^0 & P_8(\underline{x})I_{k-1}-b_{k-1} & -P_6(\underline{x})I_{k-1} \\
  -P_5(\underline{x})I_{k-1}^0 & -P_6(\underline{x})I_{k-1} & P_9(\underline{x})I_{k-1}-a_{k-1}
\end{pmatrix}.
$$
\begin{teo}\label{PROPOSITIONPARTITION}
For each $n\geq 3$, the weighted generating function $T_n(a,b,c)$
of the spanning trees on the Schreier graph $\Sigma_n$ of the
Hanoi Towers group $H^{(3)}$ is
\begin{eqnarray*}
T_n(a,b,c)=\frac{1}{a+b} \prod_{k=0}^{n-3}
(D^{(k)})^{3^{n-k-2}}\cdot
\det\left(\Lambda_2\left(P^{(n-3)}(a,b,c,a,b,c,a+b+c,a+b+c,a+b+c)\right)\right)
\end{eqnarray*}
\end{teo}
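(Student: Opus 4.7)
The plan is to invoke the weighted Matrix-Tree Theorem and then carry out a self-similar Schur complement reduction that peels off one level of the recursion at each iteration, contributing a factor of $D$ to an appropriate power while updating the parameter vector via the rational map $P$.

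\textbf{Setup.} First I would verify that $\det(\Lambda_n)/(a+b)=T_n(a,b,c)$. By the weighted Kirchhoff theorem, $T_n(a,b,c)$ equals any cofactor of $(a+b+c)I_n-\Delta_n$; I choose the one obtained by deleting the first row and column. The modifications $I_{k-1}^0$ and $\gamma\widetilde{I}_{k-1}$ in the definition of $\Lambda_n$ are designed precisely so that the only nonzero entry of the first row (and column) of $\Lambda_n$ is $(\Lambda_n)_{11}=a+b$; expansion along that row yields the identity. Moreover $\Lambda_n=\Lambda_n(\underline x_0)$ for the initial vector $\underline x_0=(a,b,c,a,b,c,a+b+c,a+b+c,a+b+c)$, so the task reduces to computing $\det(\Lambda_n(\underline x_0))$.

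\textbf{Key recursive identity.} The core of the proof is
$$
\det\!\left(\Lambda_k(\underline x)\right)\;=\;D(\underline x)^{3^{k-2}}\,\det\!\left(\Lambda_{k-1}(P(\underline x))\right),\qquad k\ge 3.
$$
To prove it, I would expand $a_{k-1},b_{k-1},c_{k-1}$ inside $\Lambda_k(\underline x)$ via their self-similar definitions, producing a $9\times 9$ block decomposition of $\Lambda_k(\underline x)$ in blocks of size $3^{k-2}$. A permutation of rows and columns then separates the positions where $a_{k-1},b_{k-1},c_{k-1}$ act as scalar multiples of $I_{k-2}$ (the \emph{non-recursive} positions, totalling $2\cdot 3^{k-1}$ rows) from those carrying the deeper copies $a_{k-2},b_{k-2},c_{k-2}$ (the \emph{recursive} positions, totalling $3^{k-1}$ rows). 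The non-recursive part decouples into $3^{k-2}$ independent $6\times 6$ systems whose entries depend on $\underline x$ only through $x_4,\dots,x_9$ and whose common determinant equals $D(\underline x)$. A block Schur complement with respect to this diagonal piece peels off the factor $D(\underline x)^{3^{k-2}}$; the residual matrix of size $3^{k-1}$, after the inverse relabeling, is $\Lambda_{k-1}(P(\underline x))$, because the coordinates $x_1,x_2,x_3$ are preserved (they govern the deepest copies $a_{k-2},b_{k-2},c_{k-2}$, which are untouched by the reduction) while the six Schur-complement entries are precisely $P_4(\underline x),\dots,P_9(\underline x)$.

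\textbf{Iteration and main obstacle.} Iterating the identity from $k=n$ down to $k=3$ produces
$$
\det\!\left(\Lambda_n(\underline x_0)\right)=\prod_{k=0}^{n-3}D\!\left(P^{(k)}(\underline x_0)\right)^{3^{n-k-2}}\cdot\det\!\left(\Lambda_2\!\left(P^{(n-3)}(\underline x_0)\right)\right),
$$
and dividing by $a+b$ gives the claimed formula. The main obstacle is the recursive identity itself: one must identify the correct row/column permutation that exposes the $3^{k-2}$ decoupled $6\times 6$ blocks, execute the Schur complement symbolically, and verify by direct expansion that the eliminated $6\times 6$ determinant is exactly $D(\underline x)$ and that the residual entries are exactly the rational functions $P_4,\dots,P_9$. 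These are mechanical linear-algebra verifications, but computationally heavy, since the numerators of $P_4,\dots,P_9$ contain roughly twenty degree-seven monomials each and the $6\times 6$ block must be inverted symbolically to build the Schur complement.
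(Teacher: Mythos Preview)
Your proposal is correct and follows essentially the same route as the paper: expand the self-similar generators one further level to obtain a $9\times 9$ block structure, permute rows and columns to isolate a $6\times 6$ block of scalar multiples of $I_{3^{k-2}}$, apply the Schur complement to peel off the factor $D^{3^{k-2}}$, observe the residual has the same form with parameters updated by $P$, and iterate down to $\Lambda_2$. One small caveat: with the paper's definition of $\Lambda_k(\underline{x})$ (whose entries are $P_i(\underline{x})$, not $x_i$), your equality $\Lambda_n=\Lambda_n(\underline{x}_0)$ is off by one application of $P$; your recursive identity is stated for the natural parametrization by the raw vector $\underline{x}$ rather than by $P(\underline{x})$, so just make sure your notation is internally consistent when you write it up.
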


\begin{proof}
It is clear, from what said above, that
$T_n(a,b,c)=\frac{\det(\Lambda_n)}{a+b}$. More precisely, the
factor $a+b$ corresponds to the entry $(1,1)$ of $\Lambda_n$. This
entry must be simplified, since we want to compute the associated
cofactor. If we expand twice the matrix $\Lambda_n$, using
recursion, and we perform the transpositions $(17)$ and $(58)$ for
both rows and columns, we get the matrix
$$
\begin{pmatrix}
  M_{11} & M_{12} \\
  M_{21} & M_{22}
\end{pmatrix}=
$$\tiny
$$
 \left(\begin{array}{cccccc|ccc}
  a+b+c & 0 & 0 & -cI_{n-2} & -aI_{n-2} & 0 & -bI_{n-2}^0 & 0 & 0 \\
  0 & a+b+c & -cI_{n-2} & 0 & -bI_{n-2} & 0 & 0 & -aI_{n-2} & 0 \\
  0 & -cI_{n-2} & a+b+c & 0 & 0 & -aI_{n-2} & 0 & 0 & -bI_{n-2} \\
  -cI_{n-2} & 0 & 0 & a+b+c & 0 & -bI_{n-2} & -aI_{n-2}^0 & 0 & 0 \\
  -aI_{n-2} & -bI_{n-2} & 0 & 0 & a+b+c & 0 & 0 & -cI_{n-2} & 0 \\
  0 & 0 & -aI_{n-2} & -bI_{n-2} & 0 & a+b+c & 0 & 0 & -cI_{n-2}\\
  \hline
  -bI_{n-2}^0 & 0 & 0 & -aI_{n-2}^0 & 0 & 0 & a+b+c-c_{n-2} & 0 & 0 \\
  0 & -aI_{n-2} & 0 & 0 & -cI_{n-2} & 0 & 0 & a+b+c-b_{n-2} & 0 \\
  0 & 0 & -bI_{n-2} & 0 & 0 & -cI_{n-2} & 0 & 0 & a+b+c-a_{n-2}
\end{array}\right).
$$\normalsize
Note that each entry is a square matrix of size $3^{n-2}$ and
$a+b+c$ is multiplied by $I_{n-2}$. Hence, the Schur complement
Formula gives
\begin{eqnarray}\label{matrice per hanoi}
\det(\Lambda_n) &=& \det(M_{11})\cdot
\det(M_{22}-M_{21}M_{11}^{-1}M_{12})\\
&=& (D^{(0)})^{3^{n-2}}\cdot\det\begin{pmatrix}
  e^{(1)}\widetilde{I}_{n-2}-c_{n-2} & -a^{(1)}I_{n-2}^0 & -b^{(1)}I_{n-2}^0 \\
 -a^{(1)}I_{n-2}^0 & f^{(1)}I_{n-2}-b_{n-2} & -c^{(1)}I_{n-2} \\
  -b^{(1)}I_{n-2}^0 &  -c^{(1)}I_{n-2}  & g^{(1)}I_{n-2}-a_{n-2}
\end{pmatrix}.\nonumber
\end{eqnarray}
The fundamental remark is that the matrix in Equation
\eqref{matrice per hanoi} has the same shape as $\Lambda_n$, since
$e^{(0)}\widetilde{I}_{n-1}=(a+b+c)I_{n-1}$. Therefore, we can
apply a recursive argument and use the same strategy $n-3$ times
until we get a $9\times 9$ matrix, that coincides with
$\Lambda_2\left(P^{(n-3)}(a,b,c,a,b,c,a+b+c,a+b+c,a+b+c)\right)$.

Observe that the entry $(1,1)$ of
$\Lambda_2\left(P^{(n-3)}(a,b,c,a,b,c,a+b+c,a+b+c,a+b+c)\right)$
is still equal to $a+b$; moreover, all the remaining entries in
the first row and column of this matrix are zero. It follows that
in the determinant of
$\Lambda_2\left(P^{(n-3)}(a,b,c,a,b,c,a+b+c,a+b+c,a+b+c)\right)$ a
factor $(a+b)$ occurs, that we have to simplify in order to apply
Kirchhoff's Theorem. Hence, Equation (\ref{matrice per hanoi})
becomes
\begin{eqnarray*}
\det(\Lambda_n)&=&(D^{(0)})^{3^{n-2}}\cdot\det(\Lambda_{n-1}(a,b,c,a,b,c,a+b+c,a+b+c,a+b+c))\\
&=& (D^{(0)})^{3^{n-2}}\cdot (D^{(1)})^{3^{n-3}}\cdot\det\left(
\Lambda_{n-2}(P^{(1)}(a,b,c,a,b,c,a+b+c,a+b+c,a+b+c))\right)\\
&=& \prod_{k=0}^{n-3}(D^{(k)})^{3^{n-k-2}}\cdot \det\left(\Lambda_2\left(P^{(n-3)}(a,b,c,a,b,c,a+b+c,a+b+c,a+b+c)\right)\right) \\
&=& (a+b)T_n(a,b,c).
\end{eqnarray*}
\end{proof}


\section{Some statistics}\label{statistics}

In this section, we deal with a statistical analysis about the
number of edges, with a fixed label $w\in\{a,b,c\}$, occurring in
a random spanning tree of the considered graph.\\ \indent It is
clear that in the case of the Schreier graphs $\{\Sigma_n\}_{n\geq
1}$ of the Hanoi Towers group, as in the case of both directional
and Schreier labellings of the Sierpi\'{n}ski graphs
$\{\Gamma_n\}_{n\geq 1}$, all the weights play
the same role in the labelling of the edges of the graph.\\
\indent On the other hand, in the rotational-invariant model, the
weights $a$ and $b$ are symmetric, whereas the weight $c$ plays a
special role in the labelling of the graph. Hence, it is
interesting to perform such analysis on the Sierpi\'{n}ski graphs
$\{\Gamma_n\}_{n\geq 1}$, when the edges are endowed with the
rotational-invariant labelling.

Our techniques are classical: more precisely, logarithmic
derivatives of the weighted generating function $T_n(a,b,c)$ with
respect to $w$ give us the mean density of $w$-edges in a random
spanning tree. We can further find the variance and show that the
limiting distribution is normal.

Let $w_n$ be the random variable given by the number of edges
labelled $w$ in a random spanning tree of $\Gamma_n$, with
$w\in\{a,b,c\}$. Denote by $\mu_{n,w}$ and $\sigma^2_{n,w}$ the
mean and the variance of $w_n$, respectively. From the remark
above it follows that:
$$
\mu_{n,a}=\mu_{n,b} \qquad \qquad \sigma^2_{n,a}=\sigma^2_{n,b}.
$$

\begin{prop}\label{propstat}
\begin{enumerate}
\item The means and the variances of the random variables
$a_n,b_n$ and $c_n$ are:
$$
\mu_{n,a}=\mu_{n,b}=\frac{16\cdot 3^n+7}{30} \qquad \qquad
\mu_{n,c}=\frac{13\cdot 3^n+1}{30}
$$
$$
\sigma^2_{n,a}=\sigma^2_{n,b}=\frac{199\cdot 3^n+28}{900} \qquad
\qquad \sigma^2_{n,c}=\frac{34\cdot 3^n-2}{225}.
$$
\item The random variables $a_n,b_n$ and $c_n$ are asymptotically normal, as $n\rightarrow \infty$.
\end{enumerate}
\end{prop}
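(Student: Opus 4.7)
The plan is to read off both the means and the variances from the explicit product formula for $T_n(a,b,c)$ given in Theorem \ref{modellofacileteo}, and then to deduce asymptotic normality from the fact that, when restricted to a single variable, this formula is a product of linear factors with real negative roots.

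More precisely, let $N_w$ denote the number of $w$-edges in a uniformly random spanning tree of $\Gamma_n$. Since each spanning tree has weight $1$ at $(a,b,c)=(1,1,1)$, standard generating-function calculus gives
\begin{equation*}
\mu_{n,w}=\left.\frac{\partial \log T_n}{\partial w}\right|_{(1,1,1)}, \qquad \sigma^{2}_{n,w}=\mu_{n,w}+\left.\frac{\partial^{2}\log T_n}{\partial w^{2}}\right|_{(1,1,1)}.
\end{equation*}
From Theorem \ref{modellofacileteo},
\begin{equation*}
\log T_n = \mathrm{const} + 3^{n-1}\log(a+b) + \tfrac{3^{n-1}-1}{2}\log(a+b+3c) + \tfrac{3^n+1}{2}\log(ab+ac+bc).
\end{equation*}
Evaluating the three elementary derivatives at $(1,1,1)$ (e.g.\ $\partial_a\log(a+b)=\tfrac12$, $\partial_a\log(a+b+3c)=\tfrac15$, $\partial_a\log(ab+ac+bc)=\tfrac23$, and the analogous values for $c$) and combining over a common denominator yields the stated expressions for $\mu_{n,a}$ and $\mu_{n,c}$; the identity $\mu_{n,a}=\mu_{n,b}$ is forced by the symmetry $a\leftrightarrow b$ of the formula. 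The variances follow by the same recipe applied to the second derivatives, using $\partial_w^2\log f=-(\partial_w f)^2/f^2$ for each of the three factors and the formula above; the only delicate point is the careful bookkeeping of the $3^n$ and constant terms when reducing over the common denominators $900$ and $225$.

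For the asymptotic normality, the key observation is the \emph{real-rootedness} of the univariate specializations. Setting $b=c=1$ gives
\begin{equation*}
T_n(a,1,1) = C_n\,(a+1)^{3^{n-1}}\,(a+4)^{(3^{n-1}-1)/2}\,(2a+1)^{(3^n+1)/2},
\end{equation*}
which is a polynomial in $a$ with only real negative roots; similarly $T_n(1,1,c)=C'_n\,(3c+2)^{(3^{n-1}-1)/2}(2c+1)^{(3^n+1)/2}$ up to a constant factor. By the classical result of Harper (see also Newton), the coefficient sequence of such a polynomial, normalized to a probability distribution, is the law of a sum of independent Bernoulli random variables: explicitly, $N_a$ has the same distribution as the sum of $3^{n-1}$ Bernoullis of parameter $\tfrac12$, $\tfrac{3^{n-1}-1}{2}$ Bernoullis of parameter $\tfrac15$, and $\tfrac{3^n+1}{2}$ Bernoullis of parameter $\tfrac23$; an analogous decomposition holds for $N_c$. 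Since $\sigma_{n,w}\to\infty$ and each Bernoulli summand is bounded, the Lindeberg--Feller central limit theorem applies and yields the asymptotic normality of $(N_w-\mu_{n,w})/\sigma_{n,w}$ for $w\in\{a,b,c\}$.

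The computations in the first half are completely routine once the factored form is used; the main conceptual step is the second part, where one must recognize that the explicit factorization of $T_n$ automatically endows the univariate restriction with only real roots, thereby providing the Bernoulli decomposition and short-circuiting any direct moment-matching argument for normality.
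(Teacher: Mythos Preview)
Your computation of the means and variances via logarithmic derivatives of the factored form of $T_n$ is exactly what the paper does, so Part~(1) is handled identically.

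For Part~(2) your argument is correct but genuinely different from the paper's. The paper proceeds directly: it writes down the moment generating function of the normalized variable $C_n=(c_n-\mu_{n,c})/\sigma_{n,c}$ using the explicit product form of $T_n(1,1,c)$, and checks by hand that it converges to $e^{t^2/2}$ as $n\to\infty$. You instead observe that the univariate specializations $T_n(a,1,1)$ and $T_n(1,1,c)$ have only real negative roots, invoke Harper's theorem to identify the coefficient law with a sum of independent Bernoullis (with the parameters you list), and then apply Lindeberg--Feller. Your route is more structural: it explains \emph{why} the MGF converges rather than just verifying it, and as a bonus the Bernoulli decomposition immediately recovers the exact means and variances as $\sum p_i$ and $\sum p_i(1-p_i)$, bypassing the logarithmic-derivative calculus entirely. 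The paper's approach, on the other hand, is more self-contained and avoids appealing to an external result.
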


\begin{proof}
Let us prove the assertion for the random variable $c_n$ (similar
computations can be done for $a_n$). Take the generating function
$T_n(a,b,c)$ given in Theorem \ref{modellofacileteo} and put:
$$
T_n(c):= T_n(1,1,c) =
2^{\frac{3^n-1}{2}}3^{\frac{3^n+2n-1}{4}}5^{\frac{3^{n-1}-2n+1}{4}}(3c+2)^{\frac{3^{n-1}-1}{2}}(2c+1)^{\frac{3^n+1}{2}}.
$$
We can obtain the mean $\mu_{n,c}$ and the variance
$\sigma^2_{n,c}$ of $c_n$ by studying the derivatives of the
function $\log(T_n(c))$. We get
$$
\mu_{n,c}=\left(\log(T_n(c))\right)'\left|_{c=1} \right.=
\frac{T_n'(c)}{T_n(c)}\left|_{c=1}=\frac{13\cdot
3^n+1}{30}\right..
$$
Taking once more derivative, one gets
$$
\left(\log(T_n(c))\right)''\left|_{c=1}\right.
=\frac{T_n''(c)T_n(c)-(T_n'(c))^2}{(T_n(c))^2}\left|_{c=1}=-\frac{127\cdot
3^n+19}{450}.\right.
$$
Hence,
$$
\sigma_{n,c}^2= \left(\log(T_n(c))\right)''\left|_{c=1}\right. +
\mu_{n,c}=\frac{34\cdot 3^n-2}{225}.
$$
Next, let $C_n = \frac{c_n-\mu_{n,c}}{\sigma_{n,c}}$ be the
normalized random variable; then the moment generating function of
$C_n$ is given by
$$
\mathbb{E}(e^{tC_n}) =
e^{-\mu_{n\!,\!c}t/\sigma_{n\!,\!c}}\mathbb{E}(e^{tc_n/\sigma_{n\!,\!c}})
=
e^{-\mu_{n\!,\!c}t/\sigma_{n\!,\!c}}\frac{T_n(e^{t/\sigma_{n\!,\!c}})}{T_n(1)}.
$$
We get
$$
\mathbb{E}(e^{tC_n}) =
3^{-\frac{3^n+1}{2}}5^{-\frac{3^n-3}{6}}e^{-\frac{(13\cdot
3^n+1)t}{2(34\cdot 3^n-2)^{1/2}}}\left(2+3e^{\frac{15t}{(34\cdot
3^n-2)^{1/2}}}\right)^{\frac{3^n-3}{6}}
\left(1+2e^{\frac{15t}{(34\cdot
3^n-2)^{1/2}}}\right)^{\frac{3^n+1}{2}},
$$
whose limit as $n\rightarrow \infty$ is $e^{\frac{t^2}{2}}$,
showing that the random variable is asymptotically normal.
\end{proof}

\section*{Acknowledgements}
We wish to express our deepest gratitude to Tullio
Ceccherini-Silberstein for useful comments and suggestions.

\end{document}